\newcommand{\HOX}[1]{\marginpar{\footnotesize sharp1}}
\newtheorem{thm}{Theorem}[section]
\newtheorem{lem}[thm]{Lemma}
\newtheorem{prop}[thm]{Proposition}
\newtheorem{rem}[thm]{Remark}
\theoremstyle{definition}
\numberwithin{equation}{section}
\newcommand{\thmref}[1]{Theorem~\ref{sharp1}}
\newcommand{\secref}[1]{\S\ref{sharp1}}
\newcommand{\lemref}[1]{Lemma~\ref{sharp1}}
\newcommand{\exit}{\tau_{\mathrm{exit}}}
\newcommand{\C}{\mathbb{C}}
\renewcommand{\div}{\operatorname{div}}
\newcommand{\n}[1]{\langle #1 \rangle}
\newcommand{\N}{\mathbb{N}}
\newcommand{\R}{\mathbb{R}}
\renewcommand{\t}[1]{\tilde{sharp1}}
\newcommand{\tr}{\hbox{tr}\,}
\newcommand{\dbtilde}[1]{\tilde{\tilde{sharp1}}}
\def\tilde{\widetilde}
\def \bfo {\begin {eqnarray*} }
\def \efo {\end {eqnarray*} }
\def \ba {\begin {eqnarray*} }
\def \ea {\end {eqnarray*} }
\def \beq {\begin {eqnarray}}
\def \eeq {\end {eqnarray}}
\def \supp {\hbox{supp }}
\def \diam {\hbox{diam }}
\def \det {\hbox{det}}
\def \p {\partial}
\def\tilde{\widetilde}
\def \bfo {\begin {eqnarray*} }
\def \efo {\end {eqnarray*} }
\def \ba {\begin {eqnarray*} }
\def \ea {\end {eqnarray*} }
\def \beq {\begin {eqnarray}}
\def \eeq {\end {eqnarray}}
\def \supp {\hbox{supp }}
\def \diam {\hbox{diam }}
\def \det {\hbox{det}}
\def \p {\partial}
\newcommand{\h}[1]{\{ sharp1 \}}
\newcommand{\LA}{\langle}
\newcommand{\RA}{\rangle}
\newcommand{\LC}{\left(}
\newcommand{\RC}{\right)}
\begin{document}
\title[Stability of an inverse spectral problem]
{H\"older stability of an inverse spectral problem for the magnetic Schr\"odinger operator on a simple manifold}

\author[Liu]{Boya Liu}
\address{B. Liu, Department of Mathematics\\
North Dakota State University, Fargo\\ 
ND 58102, USA}
\email{boya.liu@ndsu.edu}

\author[Quan]{Hadrian Quan}
\address{H. Quan, Department of Mathematics, University of California, Santa Cruz, CA 95064, USA}
\email{hquan1@ucsc.edu}

\author[Saksala]{Teemu Saksala}
\address{T. Saksala, Department of Mathematics\\
North Carolina State University, Raleigh\\ 
NC 27695, USA}
\email{tssaksal@ncsu.edu}

\author[Yan]{Lili Yan}
\address{L. Yan, School of Mathematics, University of Minnesota, Minneapolis, MN 55455, USA}
\email{lyan@umn.edu}

\begin{abstract}
We show that on a simple Riemannian manifold, the electric potential and the solenoidal part of the magnetic potential appearing in the magnetic Schr\"odinger operator can be recovered Hölder stably from the boundary spectral data. This data contains the eigenvalues and the Neumann traces of the corresponding sequence of Dirichlet eigenfunctions of the operator. Our proof contains two parts, which we present in the reverse order. (1) We show that the boundary spectral data  can be stably obtained from the Dirichlet-to-Neumann map associated with the respective initial boundary value problem for a hyperbolic equation, whose leading order terms are \textit{a priori} known. (2) We construct geometric optics solutions to the hyperbolic equation, which reduce the stable recovery of the lower order terms to the stable inversion of the geodesic ray transform of one-forms and functions. 
\end{abstract}

\maketitle

\section{Introduction and Statement of Results}
In this paper we show that, on a simple Riemannian manifold, the electric potential and the solenoidal part of the magnetic potential of the magnetic Schrödinger operator can be recovered Hölder stably from the boundary spectral data. In other words, we solve a quantitative version of the Gel’fand inverse boundary spectral problem \cite{Gelfand}: ``Determine the coefficients of an elliptic second order partial differential operator  from the boundary spectral data''. We do not address the more difficult question of recovering the principal terms, which corresponds to the  reconstruction of a Riemannian manifold with boundary and its metric from this data.

Our proof consists of two parts, which we present in the reverse order. We show that the boundary spectral data can be stably obtained from the hyperbolic Dirichlet-to-Neumann map associated with the respective initial boundary value problem for a hyperbolic partial differential operator, whose coefficients are time-independent, and the leading order terms are \textit{a priori} known, while the lower order terms are to be determined. To stably recover the unknown lower order terms from the hyperbolic Dirichlet-to-Neumann map, we shall construct geometric optics solutions to the hyperbolic equation. Via these solutions, we reduce the stable recovery of the lower order terms to the stability of the normal operator of the geodesic ray transform of one-forms and functions. 

The present geometric framework of a simple manifold was utilized in  \cite{Stefanov_Uhlmann_xray_transform} to guarantee that the normal operator of the geodesic ray transform poses the required stability estimate. However, our reduction procedure from boundary spectral data to the hyperbolic Dirichlet-to-Neumann map is valid for all smooth compact manifolds. Furthermore, we believe that the aforementioned construction of geometric optics solutions can also be implemented on non-simple manifolds with boundary. Therefore, we expect that the first main result (Theorem \ref{thm:spectral_problem}) of this paper, which is novel even in the case of a Euclidean disk, can be extended to those geometries whose geodesic ray transform is stably invertible for functions and one-forms. We leave the extension of the geometric framework to a future work.

The existence of the magnetic potential leads to significant technical difficulties when we connect the boundary spectral data to the hyperbolic Dirichlet-to-Neumann map. The main contributions of this paper lie in this step, which is presented in Section \ref{sec:proof_spectral}. To elaborate, we shall connect the aforementioned sets of datum via an elliptic Dirichlet-to-Neumann map. Although similar techniques were also utilized in earlier works such as \cite{Bellassoued_Ferreira,Ben_Joud_spectral}, we need to define the elliptic Dirichlet-to-Neumann map in a Sobolev space of lower regularity, particularly with a  negative index. These changes also require us to define the hyperbolic Dirichlet-to-Neumann map in lower regularities.

\subsection{Problem setting and main results}

Let $(M,g)$ be a smooth compact Riemannian manifold of dimension $n\ge 2$ with a smooth boundary $\p M$. Throughout this paper we shall write $\Delta_g$ for the Laplace-Beltrami operator associated with the metric $g$. We use the notation $C^{\infty}(M, T^\ast M)$ for the collection of smooth complex-valued one-forms. We also denote by $d$ the exterior derivative 
of the smooth manifold $M$.
For each real-valued one-form $A \in C^{\infty}(M, T^\ast M)$, we introduce the respective magnetic differential
\[
d_A \colon C^\infty(M) \to C^\infty(M, T^\ast M), \quad d_A(f)=df+ifA.
\]
From a geometric perspective, $d_A$ can be interpreted as a connection on the trivial line bundle $M\times \C$ over $M$. 
The formal $L^2$-adjoint of $d_A$
is given by $d_A^\ast \alpha = d^\ast \alpha - i\n{A,\alpha}_g$, where $d^\ast$ maps one-forms to functions according to the formula  
\begin{equation}
\label{eq:dstar}
d^* \alpha
=
\div_g(\alpha^\sharp)=
-\frac{1}{\sqrt{\det g}}\p_{x^j} \big(g^{jk}\sqrt{\det g} \; \alpha_k\big).
\end{equation}
Here the notation $\sharp$ stands for the musical isomorphism taking co-vectors to vectors, $\div_g$ is the divergence of $g$, and $g^{ik}$ are components of the inverse metric in local coordinates $(x^1,x^2,\ldots,x^n)$. 
We note that $d_A$ is independent of the metric $g$ while $d^\ast_A$ is not.

After these preparations, we proceed to define the magnetic Laplacian as
\begin{equation}
\label{eq:def_magnetic_laplacian}
\begin{aligned}
-\Delta_{g,A}v:
&= (d_A^\ast d_A)v
= 
-\Delta_gv -2i\langle A,dv\rangle_g + (id^*A + \n{A,A}_g)v, \quad \text{for } v \in C^\infty(M),
\end{aligned}
\end{equation}
where  $\n{\cdot, \cdot}_g$ is the Riemannian inner product of $(M,g)$.

We now proceed to state the regularity conditions that we need to impose on the lower order terms we aim to recover. For each $k \in \{0,1,2,\ldots\}$, the Sobolev space $H^k(M)$ is the completion of $C^\infty(M)$, equipped with the norm
\[
\|f\|_{H^k(M)}=\|f\|_{L^2(M)}^2 + \sum_{k=1}^{n} \|\nabla_g^k f\|_{L^2(M, T^kM)},
\]
where $\nabla_g$ is the Levi-Civita connection of the metric $g$, and
the norm $\|\cdot\|_{L^2(M)}$ is induced by the natural $L^2$-inner products 
\[
(f,g)_{L^2(M)}: = \int_M f\overline{g}dV_g, 
\quad \text{ and } \quad
\langle \alpha,\beta\rangle_{L^2(M,T^*M)} = \int_M 
\langle \alpha, \bar \beta\rangle_g
dV_g
\]
for functions and tensor fields, respectively.
For each $k \in \{0,1,2,\ldots\}$, we also define covector-valued Sobolev spaces $H^k(M,T^\ast M)$ analogously. We also note that, due to compactness, the one-form  $A$ is in $H^k(M, T^\ast M)$ if and only if $A=a_j(x)dx^j$ in any local coordinates $(x^1,x^2,\ldots,x^n)$, and each component function $a_j$ is in  $H^k(M)$. Thus the following formula leads to an equivalent definition of Sobolev norms 
\[
\|A\|_{H^k(M,T^\ast M)}=\sum_{j=1}^{n}\|a_j\|_{H^k(M)}.
\]

For each $N >0$ and $k \in \N$, we define the admissible set for  magnetic potentials
\[
\mathcal{A}(k,N)=\{A\in C^\infty(M,T^\ast M): \text{ $A$ is real-valued, } \|A\|_{H^k(M,T^\ast M)}\le N\},
\]
and for electric potentials
\[
\mathcal{Q}(N)=\{q\in C^{\infty}(M): \text{ $q$ is real-valued, }  \|q\|_{H^1(M)}\le N\}.
\]

Let us  recall that it was shown in \cite[Theorem 3.3.2]{Sharafutdinov} that the covector field $A \in H^k(M, T^\ast M)$ has the following unique Helmholtz decomposition 
\[
A=A^{\mathrm{sol}}+d \varphi, \quad d^\ast A^{\mathrm{sol}}=0, \quad \varphi|_{\p M}=0,
\]
where the covector field $A^{\mathrm{sol}}\in H^k(M, T^\ast M)$ and the function $\varphi\in H^{k+1}(M, \C)$ are called the \textit{solenoidal} and the \textit{potential} parts of $A$, respectively.

In this paper we assume that the manifold $(M,g)$ is simple, which means that $(M,g)$ is a smooth compact Riemannian manifold with a smooth strictly convex boundary, and that there is a unique smoothly varying distance minimizing geodesic between any two points of $M$. In particular, all simple Riemannian manifolds are diffeomorphic to Euclidean balls, hence they are simply connected. Furthermore, simple manifolds have no trapped geodesics nor conjugate points, and for each $p \in M$ the respective distance function $d(p,\cdot)$ is smooth in $M$. 

\subsection{Inverse spectral problem}
\label{ssec:spectral_problem}
For each $N>0$, $p\in \N$, $A \in \mathcal{A}(p,N)$, and  $q \in Q(N)$, we 
consider the magnetic Schr\"odinger operator
\begin{equation}
\label{eq:def_mag_Schro}
\mathcal{E}_{g,A,q} := -\Delta_{g,A}+q,
\end{equation}
with the domain $\mathcal{D}(\mathcal{E}_{g,A,q})=H_0^1(M) \cap H^2(M)$. 
Since magnetic Schr\"odinger operators are self-adjoint, the spectrum of $\mathcal{E}_{g,A,q}$ consists of an increasing sequence of real eigenvalues, counted according to their multiplicities
\[
-\infty<\lambda_{1,A,q} \le \lambda_{2,A,q} \le \cdots \le \lambda_{k,A,q} \to \infty, \quad k \to \infty.
\]
Here the corresponding eigenfunctions are denoted by $\varphi_{k,A,q}$, and this sequence forms an orthonormal basis of $L^2(M)$. 
Since $\varphi_{k,A,q}$ satisfies the  boundary value problem
\[
\begin{cases}
\mathcal{E}_{g,A,q}\varphi = \lambda_{k,A,q}\varphi   &\text{ in }  M,
\\
\varphi= 0 & \text{ on }   \p M, 
\end{cases}
\]
by  elliptic regularity, we have that 
\[
\|\varphi_{k,A,q}\|_{H^2(M)}
\le 
C (1+|\lambda_{k,A,q}|)\|\varphi_{k,A,q}\|_{L^2(M)} = C(1+|\lambda_{k,A,q}|).
\]
Then by interpolation,  we have for any $\sigma\in[0,2]$  that
\[
\|\varphi_{k,A,q}\|_{H^\sigma(M)}
\le C(1+|\lambda_{k,A,q}|)^\frac{\sigma}{2}.
\]
Moreover, if we denote $\psi_{k,A,q} := \LA d_A\varphi_{k,A,q}, \nu \RA_g|_{\p M}$, where $\nu$ is the unit outward normal of $\p M$, an application of the trace theorem gives us the inequality
\begin{equation}
\label{eq:est_normal_eigenfunction_bdy}
\|\psi_{k,A,q}\|_{H^{1/2}(\p M)}
\le
C(1+|\lambda_{k, A, q}|).
\end{equation}
Here the Sobolev space $H^{1/2}(\p M)$ is defined as
\[
H^{1/2}(\p M) = \{v\in L^2(\p M):  \text{There exists a function } u\in H^1(M) \text{ such that } \tr (u)=v\},
\]
equipped with the norm
\[
\|v\|_{H^{1/2}(\p M)}
=
\inf \{\|u\|_{H^1(M)}: u \in H^1(M) \text{ and } \tr (u)=v \}.
\]
On the other hand, let us also recall that the eigenvalues of $\mathcal{E}_{g,A,q}$ satisfy the following Weyl asymptotics
\begin{equation}
\label{eq_wely}
C^{-1}k^{\frac{2}{n}}\le (1+|\lambda_{k,A,q}|) \le Ck^{\frac{2}{n}},
\quad \text{ for all } k \in \N,
\end{equation}
where $C$ is a constant uniform for $A\in \mathcal{A}(p,N)$ and $q\in \mathcal{Q}(N)$, see \cite[Lemma A.1]{five_author_spectral}. 

Let $\ell^1$ be the usual Banach space of real-valued sequences such that the corresponding series is absolutely convergent.
We take $m>0$ to be a constant such that $\frac{n}{2}+1< m\le n+1$,  define a sequence of weights $(\omega_k)_{k\in\N}$ by $\omega_k:=k^{-\frac{2m}{n}}$, and use them to introduce the following weighted Banach spaces
\[
\ell_{n,m}^1\big(H^{1/2}(\p M)\big)
=
\left\{h=
(h_k)_{k\in \N}: h_k\in H^{1/2}(\p M),\, (\omega_k \|h_k\|_{H^{1/2}(\p M)})_{k\in \N}\in \ell^1
\right\},
\]
and
\[
\ell_{n,m}^1(\C)
=
\left\{y=
(y_k)_{k\in \N}: y_k \in \C,\,  (\omega_k|y_k|)_{k\in \N} \in \ell^1
\right\}.
\]
The natural norms on these spaces are given by 
\begin{equation}
\label{eq:weighted_ell_1_norms}
\|h\|_{\ell_{n,m}^1\left(H^{1/2}(\p M)\right)}
=
\sum_{k\ge1} \omega_k \|h_k\|_{H^{1/2}(\p M)}
\quad \text{ and } \quad 
\|y\|_{\ell_{n,m}^1(\C)}
=
\sum_{k\ge1} \omega_k|y_k|.
\end{equation}

For $\ell \in \{1,\,2\}$, we let 
$\sigma(\mathcal{E}_{g,A_\ell,q_\ell})= \{\lambda_{k,A_\ell,q_\ell}:\,k=1,2,\cdots\}$ be the spectrum of the magnetic Schr\"odinger operator $\mathcal{E}_{g,A_\ell,q_\ell}$ given by \eqref{eq:def_mag_Schro} with $A = A_\ell$ and $q = q_\ell$, and let $\varphi_{k,A_\ell,q_\ell}$ be the normalized eigenfunction  corresponding to $\lambda_{k,A_\ell,q_\ell}$. Let us denote $\psi_{k,A_\ell,q_\ell}: = \LA d_A\varphi_{k,A_\ell,q_\ell},\nu \RA|_{\p M}$. 
For simplicity, in what follows let us write
\[
\lambda_{\ell,k}:= \lambda_{k,A_\ell,q_\ell}, \quad \varphi_{\ell,k}: = \varphi_{k,A_\ell,q_\ell}, \quad
\text{and}\quad \psi_{\ell,k}:=\psi_{k,A_\ell,q_\ell}.
\]

The first main result of this paper is as follows.
\begin{thm}
\label{thm:spectral_problem}
Let $(M,g)$ be a simple Riemannian manifold of dimension $n\ge 2$ with smooth boundary $\p M$, 
$m\in (\frac{n}{2}+1,n+1)$, and $N\ge 0$. Let $A_1, A_2\in \mathcal{A}(\lceil \frac{n}{2}\rceil+1, N)$ and $q_1,q_2\in \mathcal{Q}(N)$ be such that $A_1=A_2$ and $q_1=q_2$ on $\p M$. 
Then there exist constants $C>0$ and $\theta, \sigma_2 \in (0,1)$, all of which depend only on $(M,g), n,m$ and $N$, such that 
\begin{equation}
\label{eq:est_spectral}
\|A_1^s-A^s_2\|_{L^2(M)}+\|q_1-q_2\|_{L^2(M)}\le C(\delta+\delta^\theta)^{\sigma_2}.
\end{equation}
Here
\begin{equation}
\label{eq:delta}
\delta := \|\psi_1-\psi_2\|_{\ell^1_{n,m}(H^{1/2}(\p M))} + \|\lambda_1-\lambda_2\|_{\ell^1_{n,m}(\mathbb{C})},
\end{equation}
where $\psi_\ell=(\psi_{\ell,k})_{k\in \N}$ and 
$\lambda_\ell=(\lambda_{\ell,k})_{k\in \N}$,  $\ell = 1,\, 2$. 
\end{thm}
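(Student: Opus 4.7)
The plan is to execute the two-step reduction advertised in the introduction, in the reverse order the authors suggest. \textbf{Step 1: From boundary spectral data to a hyperbolic DN map.} I would introduce the hyperbolic Dirichlet-to-Neumann map $\Lambda_{A,q}$ associated with the initial boundary value problem
\[
(\p_t^2 + \mathcal{E}_{g,A,q})u = 0 \text{ in } (0,T)\times M, \quad u|_{t=0} = \p_t u|_{t=0} = 0, \quad u|_{(0,T)\times \p M} = f,
\]
defined by $\Lambda_{A,q}f = \LA d_A u,\nu\RA_g|_{(0,T)\times \p M}$. Following the authors' outline, it is convenient to pass through an elliptic intermediary: the BSD determine the elliptic DN map of $\mathcal{E}_{g,A,q}-z$ at any $z$ in the resolvent set via a Borg--Levinson-type eigenfunction expansion
\[
\LA \Lambda^{\mathrm{ell}}_{A,q}(z) f, g\RA_{\p M} \,\sim\, \sum_{k\ge 1} \frac{\LA f,\psi_{k,A,q}\RA_{\p M}\;\overline{\LA g,\psi_{k,A,q}\RA_{\p M}}}{\lambda_{k,A,q} - z},
\]
and the hyperbolic DN map can then be reconstructed from $\Lambda^{\mathrm{ell}}$ via the spectral resolution of the wave operator. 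The weight $\omega_k = k^{-2m/n}$ with $m\in(\frac{n}{2}+1,n+1)$ is calibrated so that, after inserting the eigenfunction bound \eqref{eq:est_normal_eigenfunction_bdy} and the Weyl asymptotics \eqref{eq_wely}, the corresponding series for $\Lambda^{\mathrm{ell}}_{A_1,q_1}-\Lambda^{\mathrm{ell}}_{A_2,q_2}$ converges absolutely, yielding a bound $\|\Lambda_{A_1,q_1}-\Lambda_{A_2,q_2}\|_{*} \le C\delta$ in a suitable negative-index boundary operator norm.

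\textbf{Step 2: Geometric optics and the ray transform.} For each unit covector $(x_0,\xi_0)$ entering $M$, I would build WKB solutions
\[
u_\tau(t,x) = e^{i\tau(\phi(x)-t)}\Big(a_0(t,x) + \tau^{-1} a_1(t,x) + \cdots\Big) + r_\tau(t,x)
\]
of the magnetic wave equation, where $\phi(x)=d_g(x_0,x)$ solves the eikonal equation (smooth because $(M,g)$ is simple), the principal amplitude $a_0$ is determined by a transport equation along $\gamma_{x_0,\xi_0}$ whose magnetic contribution enters via the integrating factor $\exp\bigl(-i\int \LA A,\dot\gamma\RA_g\,ds\bigr)$, and $r_\tau = O(\tau^{-1})$ by an energy estimate. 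Substituting a pair of such solutions for $(A_1,q_1)$ and $(A_2,q_2)$ into the Alessandrini-type integral identity
\[
\LA (\Lambda_{A_1,q_1}-\Lambda_{A_2,q_2})f_1,\overline{f_2}\RA = \int_0^T\!\!\!\int_M \Big(-2i\LA A_1-A_2,du_\tau^{(1)}\RA_g + V u_\tau^{(1)}\Big)\overline{u_\tau^{(2)}}\,dV_g\,dt,
\]
with $V$ collecting the zeroth-order differences in $q$ and $|A|_g^2$, and letting $\tau \to \infty$, one extracts the geodesic ray transform of the one-form $A_1-A_2$ along $\gamma_{x_0,\xi_0}$ at leading order and the X-ray transform of an effective potential in $q_1-q_2$ at the next order. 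The Stefanov--Uhlmann stability estimate for the normal operator of the geodesic ray transform on simple manifolds — which controls the solenoidal part of a one-form and any $L^2$ function by Sobolev norms of $I^\ast I$ — combined with the Helmholtz decomposition and the boundary matching $A_1 = A_2$ on $\p M$ to eliminate the gauge (potential) part, then yields
\[
\|A_1^s-A_2^s\|_{L^2(M)}+\|q_1-q_2\|_{L^2(M)} \le C\|\Lambda_{A_1,q_1}-\Lambda_{A_2,q_2}\|_{*}^{\theta'},
\]
which combined with Step 1 and an interpolation between a trivial a priori bound and the sharp estimate gives the claimed form $C(\delta+\delta^\theta)^{\sigma_2}$.

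\textbf{Main obstacle.} The principal difficulty, and the main technical contribution advertised by the authors, is Step 1. Because $A \in H^{\lceil n/2\rceil+1}$ and $q \in H^1$ have only moderate regularity, one cannot close the standard energy estimates for the wave equation (or elliptic estimates for the resolvent) in positive-index boundary Sobolev spaces while also tracking the magnetic term $-2i\LA A,du\RA_g$ quantitatively. The DN maps must therefore be defined and compared in Sobolev spaces of negative index, which in turn drives the choice of the weight exponent $m \in (\frac{n}{2}+1,\,n+1)$ so that the series expansions converge absolutely. Balancing this low-regularity operator bound against the Sobolev loss incurred by inverting the ray transform, and composing the two Hölder exponents $\theta$ and $\sigma_2$ so that the final estimate $(\delta+\delta^\theta)^{\sigma_2}$ remains nontrivial, is the most delicate bookkeeping task in the proof.
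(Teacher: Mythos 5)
Your two-step reduction mirrors the paper's structure, and the outline is broadly correct. But two substantive gaps would bite if you tried to carry it out as written.

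First, the magnetic ray transform does not appear ``cleanly'' at leading order: because the magnetic term enters the GO amplitude as an integrating factor $\beta_A=\exp\bigl(-i\int_0^t\tilde\sigma_A\bigr)$, what the integral identity actually yields is an estimate for $\bigl|\exp\bigl(i\,\mathcal{I}_1(A_1-A_2)\bigr)-1\bigr|$, not for $|\mathcal{I}_1(A_1-A_2)|$ itself. The exponential is $2\pi$-periodic, so an estimate on the former does \emph{not} automatically bound the latter: when $\mathcal{I}_1(A)(y,\theta)\in 2\pi\mathbb{Z}$ the left-hand side vanishes. One must show that for small DN-difference the exponential stays close to $1$ uniformly on $\partial_+SM_1$, and then that the values $2k\pi$, $k\neq 0$, are unreachable by a continuity argument along paths in the fiber. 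This is a nontrivial part of the proof (a mollifier/delta-sequence argument) that your plan silently assumes away.

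Second, the claim that the ray transform of $q_1-q_2$ is extracted ``at the next order'' in $\tau^{-1}$ is not workable as stated, because of exactly the gauge invariance you invoke elsewhere: $\Lambda_{A,q}=\Lambda_{A+d\varphi,q}$, so the $q$-contribution to the integral identity is contaminated by the potential part $d\varphi$ of $A_1-A_2$. Since one only controls $A_1^s-A_2^s$ (not $A_1-A_2$), the term $-2i\langle A_1-A_2,du\rangle_g$ cannot be discarded by an order-counting argument alone. The actual route is sequential and conditional: one first proves the H\"older bound for $\|A^s\|_{L^2}$, upgrades it to an $L^\infty$ bound by interpolation, then replaces $A_1,A_2$ by gauge-equivalent $A_1'=A_1-\tfrac12 d\varphi$, $A_2'=A_2+\tfrac12 d\varphi$ (which leave $\Lambda$ unchanged and satisfy $A_1'-A_2'=A^s$), and only then runs the $q$-argument, absorbing the now-small $A^s$-terms into the error. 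Finally, a minor miscue: the need for negative-index Sobolev spaces in Step~1 is not caused by ``moderate regularity'' of the potentials (they are smooth by assumption); it comes from the magnetic potential shifting the $|z|$-exponent in the derivative estimates for the elliptic DN map by $1/2$ relative to the scalar case, which spoils the $z$-optimization in the Taylor remainder unless the target space is taken at index $s\in(-1,-1/2)$.
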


To the best of our knowledge, Theorem \ref{thm:spectral_problem} is the first to establish a stable recovery of the solenoidal part of an unknown magnetic potential $A$ from the boundary spectral data of the magnetic Schr\"odinger operator. That is, we do not need to assume that $A_1 = A_2$ globally or near the boundary in Theorem \ref{thm:spectral_problem}. 

We end this subsection by noting that the conclusion of Theorem \ref{thm:spectral_problem} is natural, because the boundary spectral data $(\lambda_{A,q}, \psi_{A,q})$ do not uniquely determine the magnetic potential $A$ due to the natural gauge invariance
\begin{equation}
\label{eq:gauge}
e^{-i\varphi}\mathcal{E}_{g,A,q}e^{i\varphi} =\mathcal{E}_{g,A+d\varphi,q} 
\quad \text{ and } \quad
(\lambda_{A,q}, \psi_{A,q})
=
(\lambda_{A+d \varphi,q}, \psi_{A+d \varphi,q}),
\end{equation}
which, by a direct computation, holds for all real-valued functions $\varphi \in C^\infty(M)$ such that $\varphi|_{\p M}=0$.

\subsection{Inverse problem for a symmetric hyperbolic operator}
Given $N>0, \: k\in \N, \: A \in \mathcal{A}(k,N)$ and $q \in Q(N)$, we consider the linear hyperbolic partial differential operator with time-independent coefficients
\begin{equation}
\label{eq:def_operator}
\mathcal{H}_{g,A,q}=\p_t^2-\Delta_{g,A}+q,
\end{equation}
where $\Delta_{g,A}$ is the magnetic Laplacian given by the formula \eqref{eq:def_magnetic_laplacian}. 

For each $T>0$, throughout this paper we shall denote by $Q$ and $\Sigma$ the spacetime $Q=(0,T)\times M$ and its lateral boundary $\Sigma=(0,T)\times \p M$, respectively. Let us consider the following initial boundary value problem
\begin{equation}
\label{eq:ibvp_equivalent}
\begin{cases}
\mathcal{H}_{g,A,q}u=0   & \text{ in } Q,
\\
u(0,\cdot)=0, \quad \p_tu(0,\cdot)=0   &\text{ in } M,
\\
u=f  & \text{ on } \Sigma,
\end{cases}
\end{equation}
with $f\in L^2_0(\Sigma) := \{f \in L^2(\Sigma):\: \supp{f}\subset \subset \Sigma\}$. It then follows from \cite[Theorem 2.42
]{KKL_book} and \cite[Theorem 2.3]{Lasiecka_Lions_Triggiani} that \eqref{eq:ibvp_equivalent} has a unique solution $u\in C([0, T]; L^2(M))\cap C^1([0, T]; H^{-1}(M))$ such that $\LA d_Au, \nu \RA_g \in H^{-1}(\Sigma)$.

We next introduce the boundary measurement for the second inverse problem in question, which is associated to the initial boundary value problem \eqref{eq:ibvp_equivalent}. Due to the low regularity assumption of the boundary value $f$, we need to define the hyperbolic Dirichlet-to-Neumann map  $\Lambda_{A,q}:  L^2(\Sigma)\to H^{-1}(\Sigma)$ in a weak sense:
\begin{equation}
\label{eq:def_hyperbolic_DN_map}
\LA\Lambda_{A,q}(f),h\RA_{H^{-1}(\Sigma), H^1(\Sigma)}= ( f,\overline{\LA d_A\varphi, \nu \RA}_g )_{L^2(\Sigma)}, \quad \text{ for every } h\in H^1_0(\Sigma).
\end{equation}
Here the function $\varphi\in C([0,T];H^1(M))\cap C^1([0,T];L^2(M))$ is the unique solution to the problem
\[
\begin{cases}
\mathcal{H}_{g,A,q}\varphi=0   &\text{ in } Q,
\\
\varphi(T,\cdot)=0, \quad \p_t\varphi(T,\cdot)=0  &\text{ in }   M,
\\
\varphi=h &\text{ on } \Sigma,
\end{cases}
\]
with $\LA d_A\varphi, \nu \RA_g\in L^2(\Sigma)$.

In our second main result, we establish a H\"older-type stability estimate for the solenoidal part $A^s$ of the one-form $A$ and the function $q$ in terms of the hyperbolic Dirichlet-to-Neumann map $\Lambda_{A,q}$.

\begin{thm}
\label{thm:main_result_hyperbolic}
Let $T>\mathrm{diam} (M)$. If the hypotheses in Theorem \ref{thm:spectral_problem} are satisfied, then there exist constants $\sigma_1 \in (0,1)$ and $C>0$, both of which only depend on $(M,g), n,T$ and $N$, such that 
the following estimate is valid:
\begin{equation}
\label{eq:est_X_and_q}
\|A^s_1-A^s_2\|_{L^2(M)}+\|q_1-q_2\|_{L^2(M)}\le C\|\Lambda_{A_1,q_1}-\Lambda_{A_2,q_2}\|^{\sigma_1}_{\mathcal{L}(L^2(\Sigma),H^{-1}(\Sigma))}.
\end{equation}
\end{thm}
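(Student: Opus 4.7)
The plan is to use geometric optics (GO) solutions of $\mathcal{H}_{g,A,q}u = 0$ to convert the stability of $\Lambda_{A,q}$ into stability of the geodesic X-ray transform, to which the Stefanov--Uhlmann inversion on simple manifolds applies. Because $(M,g)$ is simple, for any $y$ in a slight extension of $M$ the distance function $r(x) = d_g(y,x)$ is smooth on $M$, and $\varphi(t,x) = t - r(x)$ solves the eikonal equation $(\partial_t\varphi)^2 = |\nabla_g\varphi|_g^2$. For $\rho \ge 1$ I would construct GO solutions of the form
\[
u_\rho(t,x) = e^{i\rho\varphi(t,x)}\bigl(a_0(t,x) + \rho^{-1} a_1(t,x)\bigr) + R_\rho(t,x),
\]
in which $a_0$ is determined by a first-order transport equation along the characteristic flow of $\varphi$; because of the magnetic term $-2i\langle A, du\rangle_g$ in $\mathcal{H}_{g,A,q}$, the solution $a_0$ along the radial geodesic $\gamma$ emanating from $y$ carries an exponential weight of the form $\exp\bigl(-i\int_\gamma A\bigr)$. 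The remainder $R_\rho$ is defined by an initial-boundary value problem for $\mathcal{H}_{g,A,q}$ that forces $u_\rho$ to meet the homogeneous Cauchy data (forward or backward in time), and standard hyperbolic energy estimates will yield $\|R_\rho\|_{L^2(Q)} = O(\rho^{-1})$ together with a matching $H^{-1}(\Sigma)$-bound on $\langle d_A R_\rho, \nu\rangle_g$.

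I would then take $u_1$ a forward GO solution for $(A_1, q_1)$ and $u_2$ a backward GO solution for $(A_2, q_2)$, built with conjugate phases $\pm\rho\varphi$ so that $e^{\pm i\rho\varphi}$ cancels in $u_1\overline{u_2}$. Multiplying $\mathcal{H}_{g,A_1,q_1} u_1 = 0$ by $\overline{u_2}$, integrating over $Q$, and invoking \eqref{eq:def_hyperbolic_DN_map} together with $A_1|_{\partial M} = A_2|_{\partial M}$ and $q_1|_{\partial M} = q_2|_{\partial M}$ produces an Alessandrini-type identity
\[
\int_Q \bigl(-2i\langle A_1 - A_2, du_1\rangle_g + V\, u_1\bigr)\overline{u_2}\, dV_g\, dt = \bigl\langle (\Lambda_{A_1,q_1} - \Lambda_{A_2,q_2}) u_1|_\Sigma,\; \overline{u_2|_\Sigma}\bigr\rangle_{H^{-1}, H^1},
\]
where $V = id^*(A_1 - A_2) + \langle A_1 + A_2, A_1 - A_2\rangle_g + q_1 - q_2$. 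Substituting the GO ansatz, the phase cancels on the left; after varying the base point $y$ and the transversal direction of $\gamma$, the leading $O(\rho)$-term reproduces the geodesic X-ray transform of the one-form $A_1 - A_2$ weighted by $a_0^{(1)}\overline{a_0^{(2)}}$, while the $O(1)$-term reproduces the X-ray transform of a scalar built from $q_1 - q_2$. The divergence piece $id^*(A_1 - A_2)$ is a total derivative along $\gamma$ and contributes only endpoint terms that vanish under the boundary matching together with the gauge freedom that projects $A_1 - A_2$ onto its solenoidal part.

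The decisive obstacle is reconciling two opposing scales: $\|u_j|_\Sigma\|_{H^1(\Sigma)}$ is of order $\rho$, which is the regularity demanded by $\Lambda_{A,q}$ in its $\mathcal{L}(L^2, H^{-1})$ norm, while the GO remainder contributes only $O(\rho^{-1})$. Optimizing $\rho$ in the resulting bound of shape $\rho^{-1} + \rho\,\delta_{\mathrm{DN}}$, with $\delta_{\mathrm{DN}} := \|\Lambda_{A_1, q_1} - \Lambda_{A_2, q_2}\|_{\mathcal{L}(L^2(\Sigma), H^{-1}(\Sigma))}$, yields an $L^\infty$-type estimate of the normal operator $N = I^* I$ of the X-ray transform by a positive power of $\delta_{\mathrm{DN}}$. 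I would then apply the Stefanov--Uhlmann bounds $\|q\|_{L^2(M)} \le C\|Nq\|_{H^1}$ and $\|A^s\|_{L^2(M, T^*M)} \le C\|N A^s\|_{H^1}$ on simple manifolds, combined with a standard interpolation against the a priori bounds $A_j \in \mathcal{A}(\lceil n/2\rceil + 1, N)$ and $q_j \in \mathcal{Q}(N)$, to upgrade the X-ray stability into $L^2$-stability with an explicit Hölder exponent $\sigma_1 \in (0,1)$, yielding \eqref{eq:est_X_and_q}.
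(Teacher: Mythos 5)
Your overall skeleton — GO solutions with a distance-function phase, a magnetic weight $\exp(-i\int_\gamma A)$ in the amplitude, an Alessandrini-type identity, optimization in the frequency parameter, then the Stefanov--Uhlmann normal-operator bounds plus interpolation — is the same strategy the paper follows. But there is a genuine gap in your passage to the geodesic ray transform. After the phase cancellation and the time integration, the amplitude product $a_0^{(1)}\overline{a_0^{(2)}}$ carrying $\exp(-i\int_\gamma (A_1-A_2))$ does \emph{not} produce $\mathcal{I}_1(A_1-A_2)$: it produces the nonlinear quantity $\exp(i\mathcal{I}_1(A_1-A_2))-1$ (cf. the computation in Lemma~\ref{lem:DN_map_ray_transform}). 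To pass from a bound on $\exp(i\mathcal{I}_1(A))-1$ to a bound on $\mathcal{I}_1(A)$ you need the pointwise inequality $|\mathcal{I}_1(A)|\le C|\exp(i\mathcal{I}_1(A))-1|$, which fails whenever $\mathcal{I}_1(A)$ is near $2k\pi$ with $k\neq 0$; the paper rules this out via a mollification argument combined with a continuity/winding-number argument on $\p_+SM$, valid only once $\|\Lambda_{A_1,q_1}-\Lambda_{A_2,q_2}\|_{\mathcal{L}(L^2,H^{-1})}$ is a priori small. This is not an optional technicality — without it, your ``leading $O(\rho)$ term reproduces the X-ray transform'' step is simply false, and the Stefanov--Uhlmann inversion does not apply.

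A secondary but also real problem is your claim that ``the divergence piece $id^*(A_1-A_2)$ is a total derivative along $\gamma$ and contributes only endpoint terms.'' The adjoint $d^*A$ is the full divergence of $A^\sharp$, not the derivative of $\sigma_A$ along the geodesic, so it is \emph{not} a total derivative along $\gamma$ and does not telescope. In the paper this term is killed in the $A^s$ step simply because it sits at one order below the dominant term (and so acquires an extra factor $h$), and in the $q$ step it is eliminated by a gauge change $A_j\mapsto A_j'$ making $A_1'-A_2'=A^s$ with $d^*A^s=0$, \emph{after} the $A^s$ estimate has already been established and upgraded to $L^\infty$ via interpolation. Your proposal should make explicit that the recovery of $q$ cannot be carried out independently of the recovery of $A^s$ in this way: the $O(1)$ term in the identity mixes $q$ with quadratic combinations of the $A_j$ and with the magnetic weight, and untangling them requires the $L^\infty$ bound on $A^s$ from the first stage.
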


\subsection{Previous literature}

Starting with  \cite{Ambarzumian_1929}, which proved the unique recovery of a real-valued electric potential appearing in the Schr\"odinger operator $-\Delta+q$ on the open interval $(0,1)$ from partial spectral data, 
the study of inverse spectral problems dates back nearly a century. Given that the literature on this problem is extensive, we restrict our discussion only to the uniqueness and stability results concerning multidimensional inverse spectral problems. For foundational results in the one-dimensional setting, we refer readers to the seminal works   \cite{Borg,Gelfand_Levitan,Levinson}, among others. 

The first multidimensional uniqueness result for inverse spectral problems was established in \cite{Nachman_Sylvester_Uhlmann}, which showed that, for the Schr\"odinger operator, the Dirichlet eigenvalues and the traces of the normal derivatives of the corresponding eigenfunctions uniquely determine the potential $q$ in Euclidean domains. 
The geometric version of this problem is commonly solved via the celebrated Boundary Control method (BC-method), which was first developed in \cite{Belishev} for the acoustic wave equation on $\R^n$ with an isotropic wave speed. The fully geometric version of the BC-method, suitable when the wave speed is given by a Riemannian metric, was introduced   in \cite{Belishev_Kurylev}.
We refer to \cite{KKL_book} for a thorough review of related literature.
Furthermore, it was discovered in \cite{Isozaki} that the uniqueness still holds in the Euclidean setting when a finite number of spectral data are omitted, and a corresponding result on compact manifolds with boundary was obtained ion \cite{Katchalov_Kurylev}. On the other hand, it was established in \cite{Choulli_Stefanov,Isozaki} that the uniqueness for the potential $q$ can still be achieved when the spectral data are asymptotically close. Also, the authors of \cite{Kian_Morancey_Oksanen} showed that the boundary spectral data, with the normal derivatives of the eigenfunctions measured only on an open portion of the boundary, determine the potential $q$ uniquely. Finally, we refer to   \cite{Bellassoued_Kian_Mannoubi_Soccorsi,Choulli_Metidji_Soccorsi,Paivarinta_Serov,Pohjola_spectral} and the references therein for recent progress on the recovery of non-smooth potentials.

Turning our attention to inverse spectral problems for the magnetic Schr\"odinger operator, a uniqueness result was first established in \cite{Katchalov_Kurylev}, which showed that the knowledge of partial boundary spectral data, where finitely many eigenpairs are unknown, uniquely determines the metric tensor, as well as smooth lower order terms $A$ and $q$, up to a gauge similar to the one as in the equation \eqref{eq:gauge}, in  a smooth connected Riemannian manifold. 
In the Euclidean setting, it was proved in \cite{Serov_magnetic_schrodinger} that the full boundary spectral data determine the magnetic field $dA$ and $q$ uniquely for real-valued potentials $A\in W^{1,\infty}(\Omega)$ and $q\in L^\infty(\Omega)$, where $\Omega\subseteq \R^n$, $n \ge 2$, is a bounded domain with smooth boundary. A further result  \cite{Kian_Borg_Levinson} established uniqueness for $dA$ and $q$ from the asymptotic knowledge of the boundary spectral data in the Euclidean case, which was later extended to the setting of simple Riemannian manifolds in \cite{five_author_spectral}.
Additionally, under the assumption that the potentials $A$ and $q$ are known near the boundary, \cite{Bellassoued_Mannoubi} proved a partial data uniqueness result in the Euclidean setting, where normal derivatives of the eigenfunctions are measured on an arbitrary subset of the boundary.

While there exist several stability results for  inverse spectral problems, all of them only provide stability estimates for the electric potential $q$. 
To the best of our knowledge, Theorem \ref{thm:spectral_problem} is the first to provide a stable recovery of an unknown magnetic potential $A$ from the boundary spectral data of the   magnetic Schr\"odinger operator. 
In the absence of the magnetic potential $A$, a H\"older-type stability estimate was derived from boundary spectral data in the Euclidean setting in \cite{Alessandrini_Sylvester}, and this result was subsequently extended  to the setting of simple Riemannian manifolds in  \cite{Bellassoued_Ferreira}. Moreover, assuming that $A$ is known, or in terms of  Theorem \ref{thm:spectral_problem}, $A_1=A_2$ in $M$, a H\"older-type estimate was obtained for $q$ in \cite{Ben_Joud_spectral}.  For partial data results, the authors of  \cite{Choulli_Stefanov} derived a H\"older-type estimate from the boundary spectral data, where finitely many eigenpairs are omitted; in particular, only asymptotic knowledge of the spectral data was required in this result. Furthermore, in a recent work \cite{Choulli_Metidji_Soccorsi}, a H\"older-type stability was obtained for unbounded potentials, where Robin boundary conditions of the eigenfunctions are known.

From now on we discuss hyperbolic inverse problems, focusing specifically on literature concerning stability.
Although the BC-method is extremely flexible in providing uniqueness results for inverse problems for hyperbolic operators, this method does not require any geometric assumptions, (see \cite{Katchalov_Kurylev, KKL_book, LaOk} for the classical problem setting where  measurements are performed on the entire boundary, and \cite{HELIN2018132, lassas2024disjoint,saksala2025inverse} for the formulation of the problem where the measurements are taken only on open subsets of a manifold without boundary), it is known that the BC-method is very unstable due to its dependence on Tataru's unique continuation principle \cite{Tataru}. For instance,  \cite{Bosi2022Reconstruction, burago2020quantitative} provided log-log-type stability results. However, the counterexamples of \cite{mandache2001exponential} for an equivalent inverse problem show that the stability result cannot be better than logarithmic. 
If one is willing to relax geometric assumptions, or focus on the cases where the leading order terms are known, there are also other techniques
that lead to improved stability results, as well as uniqueness results, when one aims to recover time-dependent coefficients. We refer readers to  \cite{Liu_Saksala_Yan,Liu_Saksala_Yan_potential} to see detailed accounts of uniqueness results for time-dependent hyperbolic equations.  

We now proceed to survey stability results for time-independent coefficients of hyperbolic operators. When the operator \eqref{eq:def_operator} does not contain the first order perturbation $A$, a H\"older-type stability estimate for the potential $q$ was established from the hyperbolic Dirichlet-to-Neumann map in the Euclidean setting in \cite{Sun_hyperbolic_stability}, and in the setting of simple  manifolds in \cite{Bellassoued_Ferreira}. On the other hand, the authors of \cite{Bellassoued_Rezig} also assumed that $A=0$, but incorporated a damping term $a(x)\p_t$, and obtained H\"older-type estimates for both $a$ and $q$ on simple manifolds. Furthermore, H\"older-type estimates for both $dA$ and $q$  were proved in \cite{Bellassoued_Aicha} in the Euclidean setting. We would like to point out that proofs of \cite{Bellassoued_Ferreira,Bellassoued_Rezig} utilized geometric optics solutions of hyperbolic equations to reduce the stability of the lower order terms to the stable inversion of the geodesic ray transform, which was originally established in \cite{Stefanov_Uhlmann_xray_transform}. We shall follow the same strategy in this paper. Meanwhile, the authors of a very recent work \cite{Filippas_Oksanen} utilized the BC-method to establish stability for the zeroth order perturbation of the wave operator in the Euclidean setting.  Moving beyond the realm of simple manifolds,   a stability estimate was obtained in \cite{arnaiz2022stability} for the potential $q$ on Riemannian manifolds with strictly convex boundary and no conjugate points, but may have a hyperbolic trapped set of geodesics. The proof in this result relies on a reduction to the stable inversion of the geodesic ray transform established in \cite{Guillarmou}. 

It is also known that one can stably recover certain simple Riemannian metrics from the Dirichlet-to-Neumann map associated to the problem \eqref{eq:ibvp_equivalent}. For instance, a conditional H\"older-type stability estimate was proven in \cite{Stefanov_Uhlmann_98} for metrics  that are sufficiently close to the Euclidean metric, and for  generic simple metrics in \cite{Stefanov_Uhlmann}. Due to the diffeomorphism invariance of the problem \eqref{eq:ibvp_equivalent}, the metric tensor can only be recovered from the Dirichlet-to-Neumann map up to a boundary fixing diffeomorphism, see for instance \cite{Stefanov_Uhlmann_98} for details. Instead of measuring the full  Dirichlet-to-Neumann map, \cite{Bellassoued_metric} shows that one can recover a simple metric log-stably when measuring a partial hyperbolic Dirichlet-to-Neumann map, provided that the metric is Euclidean near the boundary. In particular, the Neumann data is measured only in an open subset of the boundary.  Building on the results of \cite{Bellassoued_Ferreira,Stefanov_Uhlmann_98}, assuming that the metric is close to a generic simple metric, and the covector fields are also close, the author of \cite{Montalto}  simultaneously established conditional H\"older-type estimates for the metric, as well as for the first and the zeroth order term of the hyperbolic operator \eqref{eq:def_operator}, from the Dirichlet-to-Neumann map. 

Lastly, there exists an abundance of stability results for time-dependent coefficients of hyperbolic equations in the Euclidean setting, see \cite{Bellassoued_Aicha_td_17,Bellassoud_Aicha_hyperbolic_td_19,Bellassoued_Rassas_time_dependent_hyperbolic,Aicha_hyperbolic_td,Kian_stability,Senapati_time_dependent_hyperbolic} and the references therein. In these works, geometric optics solutions were utilized to reduce the stable recovery of coefficients to the stability of the light ray transform. In particular, \cite{salazar2014stability}
shows that the light ray transform of the difference of two time-dependent potentials can be estimated by a distance between the respective Dirichlet-to-Neumann operators. By utilizing the Fourier Slice Theorem, this estimate implies the stable recovery of time-dependent electric potential $q$.  Due to the lack of Fourier transform, analogous stability results are scarcely known on Riemannian manifolds. To the best of our knowledge, the only results in this direction were given in \cite{Waters_stability}, which established H\"older-type estimates for the light ray transform of time-dependent electric  potentials $q$.  
They require specialized tools for analyzing the singularities near the light-like cone, which are not fully developed yet in the geodesic case, see for instance \cite{Greenleaf_Uhlmann_1989,Greenleaf_Uhlmann_1990,Stefanov_Yang}.

\subsection{Outline for the proof of the main results}
\label{subsec:main_idea}

The proof of Theorem \ref{thm:spectral_problem} involves two main steps, which are presented in reverse order. In Section \ref{sec:proof_spectral} we show that  the boundary spectral data can be stably recovered from the hyperbolic Dirichlet-to-Neumann map associated to the respective initial boundary value problem for the hyperbolic operator \eqref{eq:def_operator}. In the second step, which is presented in Section \ref{sec:proof_hyperbolic}, we establish a H\"older-type stability for the electrical potential and the solenoidal part of the magnetic potential from the Dirichlet-to-Neumann map $\Lambda_{A,q}$ defined in \eqref{eq:def_hyperbolic_DN_map}. This stability result is achieved via a construction of geometric optics solutions to the hyperbolic equation, which we shall discuss in detail in Section \ref{sec:GO_solution}. 

The reduction from boundary spectral data to the hyperbolic Dirichlet-to-Neumann map is carried out through a series of lemmas. First, in Lemma \ref{lem:DN_map_decomposition} we establish the connection between the boundary spectral data and the family of  Dirichlet-to-Neumann maps $\Pi_{A,q}(z)$ given by the formula \eqref{eq:def_elliptic_DNmap}, which are associated with the elliptic operator $\mathcal{E}_{g,A,q}-z$, where $z \in \R$ lies in the resolvent set of $\mathcal{E}_{g,A,q}$. Subsequently, in Lemma \ref{lem_DN_E_H} we provide an explicit formula that relates $\Pi_{A,q}(z)$ to the hyperbolic Dirichlet-to-Neumann map $\Lambda_{A,q}$. Finally, in order to establish an upper bound for $\Lambda_{A,q}$ from  the norm of the  spectral data, we need to estimate the functions given in the formula \eqref{eq:two_DNmap}, which is accomplished in Propositions \ref{prop_DN_E_difference} and \ref{prop:estimate_P0_R}.

Although a similar strategy was  utilized in \cite{Alessandrini_Sylvester,Ben_Joud_spectral,Choulli_Stefanov} to establish stability estimates for the electric potential from boundary spectral data, our proof requires significant modifications due to the presence of the magnetic potential. In the  aforementioned works, it was sufficient to consider the elliptic Dirichlet-to-Neumann map $\Pi_{A,q}(z):H^{3/2}(\p M) \to H^s(\p M)$,  $0 <s <\frac{1}{2}$. However, in this paper the presence of the magnetic potential changes the exponents of $|z|$ in Proposition \ref{prop_DN_E_difference} from $-j-\frac{s}{2}-\frac{1}{4}$ in previous results to  $-j-\frac{s}{2}+\frac{1}{4}$, which introduces a major challenge to our proof. At the end of Section \ref{sec:proof_spectral}, we shall obtain  an upper bound for the hyperbolic Dirichlet-to-Neumann map by seeking the minimum value of a function involving powers of $|z|$. It requires us to redefine $\Pi_{A,q}(z)$ as an operator from $H^{1/2}(\p M)$ to $H^s(\p M)$, $-1<s<-\frac{1}{2}$. We need to verify that each result in Section \ref{sec:proof_spectral} still holds in the Sobolev space of lower regularity. Furthermore, this change also requires us to define the hyperbolic Dirichlet-to-Neumann map $\Lambda_{A,q}:L^2(\Sigma)\to H^{-1}(\Sigma)$. In earlier works, the Dirichlet-to-Neumann map was defined from $H^1$-smooth Dirichlet data to $L^2$-smooth Neumann data.

Let us next briefly describe the main ideas for the proof of Theorem \ref{thm:main_result_hyperbolic}. The first main component in the proof is the derivation integral identity and its estimate, which is given in Lemma \ref{lem:int_estimate} for the   magnetic potential, and in Lemma \ref{lem:est_int_id_q} for the  electric potential. These lemmas relate the hyperbolic Dirichlet-to-Neumann map to the difference of potentials.

The second main ingredient of the proof is the construction of geometric optics (GO) solutions to the equation $\mathcal{H}_{g,A,q}u=0$ of the form
\[
u(t,x;h)=e^{\frac{i(\psi(x)-t)}{h}} \alpha(t,x)\beta_A(t,x)+r(t,x;h),
\]
where $0<h\ll1$ is a semiclassical parameter, $\psi(x)$ is a phase function that solves an eikonal equation, and the amplitude terms $\alpha$ and $\beta_A$ solve transport equations. Finally, $r$ is a correction term. These functions are constructed in geodesic polar coordinates. Also, with the help of two amplitude terms, we can first isolate a term that will lead to the geodesic ray transform of the magnetic potential. The second amplitude term contains a freely chosen function on the inward-pointing bundle of the boundary and helps us to construct the normal operator of the geodesic ray transform.   

To conclude the proof, we substitute the GO solutions into the aforementioned integral estimates and obtain an upper bound of an integral involving the geodesic ray transform of the difference of magnetic and electric potentials, which are given in Lemma \ref{lem:DN_map_ray_transform} and Lemma \ref{lem:DN_map_ray_transform_q}, respectively. Then we utilize the stability for geodesic ray transforms of one-forms and functions established in \cite{Stefanov_Uhlmann_xray_transform}
to complete the proof of Theorem \ref{thm:main_result_hyperbolic}.

This paper is organized as follows. We first recall some properties of the geodesic ray transforms on simple manifolds in Section \ref{sec:xray_transform}. Then we move to construct geometric optics solutions to the hyperbolic equation in Section \ref{sec:GO_solution}, and use them to prove Theorem \ref{thm:main_result_hyperbolic} in Section \ref{sec:proof_hyperbolic}. Finally, we prove Theorem \ref{thm:spectral_problem} in Section \ref{sec:proof_spectral}.

\subsection*{Acknowledgments}
T.S. was supported by the National Science Foundation grant DMS-2204997.
We would like to thank Katya Krupchyk, Lauri Oksanen, and Mikko Salo for their discussions and suggestions.

\section{Geodesic Ray Transforms}
\label{sec:xray_transform}
In this short section, we recall some important properties of the geodesic ray transforms for one-forms and functions on simple manifolds.

\subsection{Geodesic ray transforms of one-forms}
\label{subsec:transform_1form}
Let $(M,g)$ be a simple manifold. Then for any unit vector $(x,\xi)\in SM$, there exists a unique maximal geodesic $\gamma_{x,\xi}(\tau):[0,\exit(x,\xi)]\to M$, which is given by the initial conditions $\gamma_{x,\xi}(0)=x$ and $\dot{\gamma}_{x,\xi}(0)=\xi$. The geodesic flow $\Phi_\tau: SM \to SM$ of $g$ is given as follows:
\[
\Phi_\tau(x,\xi)=(\gamma_{x,\xi}(\tau),\dot{\gamma}_{x,\xi}(\tau)), \quad \tau\in [0,\exit(x,\xi)].                  
\]  

For each smooth one-form $A$, the \textit{smooth symbol function} $\sigma_A\in C^\infty(SM)$ is given by
\[
\sigma_A(x,\xi)
=A(x,\xi)\quad (x,\xi)\in SM.
\]
Then we use the notation
\[
\p_+SM=\{(x,\xi) \in SM: x\in \p M, \: \n{\xi,\nu(x)}_g\le0 \},
\]
for the bundle of inward-pointing vectors at $\p M$.

We define the geodesic ray transform for one-forms  $\mathcal{I}_1:C^\infty(M,T^\ast M)\to C^\infty(\p_+SM)$ by the formula
\begin{equation}
\label{eq:def_ray_transform_1form}
\mathcal{I}_1(A)(x,\xi)
=
\int_{0}^{\exit(x,\xi)}\sigma_A(\Phi_\tau(x,\xi))d\tau, \quad (x,\xi)\in \p_+SM.
\end{equation}
By \cite[Theorem 4.2.1]{Sharafutdinov},   $\mathcal{I}_1$ can be extended to a  bounded operator $\mathcal{I}_1:H^k(M,T^\ast M)\to H^k(\p_+ SM)$ for every integer $k\ge 0$.

Let us recall that the Riemannian metric $g$ induces the volume form $dV_g$, while the first fundamental form ($g|_{\p M}$) of $\p M$ induces a volume form $d\sigma$ on $\p M$. On the other hand, on the bundle $\p SM$, we use the measure
$
dS_g=d\omega_x \wedge d\sigma,
$
where $d \omega_x$ is the volume on the sphere $S_xM$, which is a fiber of $SM$ at $x\in \p M$.

When a standard weight $\mu \in C^\infty(\p_+SM_1)$ is defined as 
$\mu(y, \theta)= -\n{\theta, \nu(y)}$, $\mathcal I_1$ is a bounded linear operator 
\[
\mathcal I_1 \colon L^2(M,T^\ast M) \to L^2_\mu(\p_+SM_1)=\{f\colon \p_+SM_1 \to \R: \: \|\sqrt{\mu}f\|_{L^2(\p_+SM_1)}<\infty\}.
\]
Thus, this operator has an adjoint
$
\mathcal{I}_1^\ast \colon L^2_\mu(\p_+SM_1) \to L^2(M,T^\ast M).
$

Let us recall that the simple metric $g$ has a smooth simple extension to a slightly larger simple manifold  $M_1$ such that $M \subset \subset M_1^{\mathrm{int}}$. Then it follows from \cite[Theorem 4]{Stefanov_Uhlmann_xray_transform} that for any one-form $A \in L^2(M,T^\ast M)$, there exist constants $C_1, C_2>0$ such that
\begin{equation}
\label{eq:est_transform_1_form}
C_1\|A^s\|_{L^2(M,T^\ast M)} \le \|\mathcal{N}_1(A)\|_{H^1(M_1,T^\ast M_1)} \le C_2\|A^s\|_{L^2(M,T^\ast M)},
\end{equation}
where the normal operator $\mathcal{N}_1=\mathcal{I}_1^\ast\mathcal{I}_1$.

If $\mathcal{O}$ is an open subset of a simple manifold $(M_1,g)$, it was established in \cite[Section 5]{Stefanov_Uhlmann_xray_transform} that  $\mathcal{N}_1$ is an elliptic pseudodifferential operator of order $-1$ on $\mathcal{O}$, and its principal symbol $\rho(x,\xi)=(\rho_{jk}(x,\xi))_{1\le j,k\le n}$, where
\[
\rho_{jk}(x,\xi)= \frac{c_n}{|\xi|}\left(g_{jk}-\frac{\xi_j\xi_k}{|\xi|^2}\right), \quad (x,\xi)\in T\mathcal{O}.
\]
Hence, for each $k\ge 0$ there exists a constant $C_k>0$ such that the estimate
\begin{equation}
\label{eq:est_normal_solenoidal}
\|\mathcal{N}_1(A)\|_{H^{k+1}(M_1,T^\ast M_1)}\le C_k\|A^s\|_{H^k(\overline{\mathcal{O}},T^\ast \overline{\mathcal{O}})}
\end{equation}
holds for all one-forms $A\in H^k(M,T^\ast M)$ that are compactly supported in $\mathcal{O}$, see \cite[Section 8]{Stefanov_Uhlmann_xray_transform}.
 
\subsection{Geodesic ray transforms of functions} 
\label{subsec:transform_functions}

We also define the geodesic ray transform for functions $\mathcal{I}_0: C^\infty(M)\to C^\infty (\p_+ SM)$ by the formula
\begin{equation}
\label{eq:def_ray_transform_functions}
\mathcal{I}_0(f)(x, \xi) =\int_{0}^{\exit(x,\xi)} f(\gamma_{x,\xi}(\tau))d\tau, \quad (x,\xi)\in \p_+SM.
\end{equation}
Then an application of \cite[Theorem 4.2.1]{Sharafutdinov} yields that on the simple manifold $M$,  $\mathcal{I}_0$ can be extended to a bounded operator $\mathcal{I}_0: H^k(M) \to H^k(\p_+ SM)$. 

Furthermore, if $\mathcal{O}$ is an open subset of a simple manifold $(M_1,g)$, as discussed  in \cite[Section 5]{Stefanov_Uhlmann_xray_transform}, the normal operator $\mathcal{N}_0=\mathcal{I}_0^\ast \mathcal{I}_0$ is an elliptic pseudodifferential operator of order $-1$ on $\mathcal{O}$ whose principal symbol is a  multiple of $|\xi|^{-1}_g$. Thus, for each $k\ge 0$ there exists a constant $C_k>0$ such that the estimate
\begin{equation}
\label{eq:est_normal_q}
\|\mathcal{N}_0(f)\|_{H^{k+1}(M_1)}\le C_k\|f\|_{H^k(\overline{\mathcal{O}})}
\end{equation}
holds for all functions $f\in H^k(\mathcal{O})$ with compact support in $\mathcal{O}$, see \cite[Section 9]{Stefanov_Uhlmann_xray_transform}. Finally, 
by \cite[Theorem 3]{Stefanov_Uhlmann_xray_transform}, there exist constants $C_1, C_2>0$ such that the inequality
\begin{equation}
\label{eq:est_transform_function}
C_1\|f\|_{L^2(M)} \le \|\mathcal{I}_0^\ast \mathcal{I}_0(f)\|_{H^1(M_1)} \le C_2\|f\|_{L^2(M)}
\end{equation}
holds for any function $f\in L^2(M)$.

\section{Geometric Optics Solutions for the Hyperbolic Operator}
\label{sec:GO_solution}

Our goal in this section is to construct geometric optics (GO) solutions to the hyperbolic operator \eqref{eq:def_operator} by following similar ideas as in \cite{Bellassoued}. These solutions will be applied in Section \ref{sec:proof_hyperbolic} to provide information on the geodesic ray transform of the difference of  magnetic potentials and electrical potentials.

Consider the nonhomogeneous initial boundary value problem with homogeneous initial and boundary conditions
\begin{equation}
\label{eq:ibvp_general_F}
\begin{cases}
\mathcal{H}_{g,A,q}u=F(t,x) &\text{ in } Q,
\\
u(0,\cdot)=0, \quad \p_tu(0,\cdot)=0 &\text{ in } M,
\\
u=0 &\text{ on } \Sigma.
\end{cases}
\end{equation}
The following lemma, which combines the results originally established in \cite[Theorems 2.30 and 2.45]{KKL_book}, indicates the well-posedness of this problem and the energy estimates the solution satisfies.  
\begin{lem}
\label{lem:wellposedness}
Let $(M,g)$ be a simple manifold of dimension $n\ge 2$ with smooth boundary $\p M$. For any function $F\in L^1(0,T;H^1(M))$ such that $F(0,\cdot)=0$ in $M$, the   problem \eqref{eq:ibvp_general_F} admits a unique solution $u \in C^2(0,T;L^2(M)) \cap C(0,T;H^2(M))\cap C^1(0,T; H_0^1(M))$ satisfying the estimates
\begin{equation}
\label{eq:est_derivative_solution}
\max_{t\in [0,T]} \left(\|u(t,\cdot)\|_{H^1(M)}+\|\p_t u(t,\cdot)\|_{L^2(M)} \right)
\le
C\|F\|_{L^2(Q)},
\end{equation}
\[
\|\p_\nu u\|_{L^2(\Sigma)}\le C\|F\|_{L^2(Q)},
\]
and
\begin{equation}
\label{eq:est_H1_F}
\max_{t\in [0,T]}
\left(\|\p_t^2 u(t,\cdot)\|_{L^2(M)}
+
\|\nabla_g \p_tu(t,\cdot)\|_{L^2(M)}
+
\|\Delta_g u(t,\cdot)\|_{L^2(M)}
\right)
\le
C\|F\|_{H^1(0,T;L^2(M))}.
\end{equation}
\end{lem}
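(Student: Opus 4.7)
The plan is to treat $\mathcal{H}_{g,A,q}$ as a second order hyperbolic operator with principal part $\p_t^2-\Delta_g$ and time-independent lower order perturbations. Since $A\in H^{\lceil n/2\rceil+1}(M,T^\ast M)$, Sobolev embedding gives bounds on $\|A\|_{L^\infty(M)}$ and $\|d^\ast A\|_{L^\infty(M)}$, and $q$ is smooth on the compact manifold $\overline{M}$, so all coefficients appearing in $\Delta_{g,A}$ are bounded. I would first invoke the standard cosine family/semigroup theory applied to the time-independent self-adjoint operator $-\Delta_{g,A}+q$ on the domain $H_0^1(M)\cap H^2(M)$ to produce a unique mild solution $u\in C([0,T];H_0^1(M))\cap C^1([0,T];L^2(M))$ to \eqref{eq:ibvp_general_F}. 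Multiplying the equation by $\p_t u$, integrating by parts, and invoking Gr\"onwall's inequality then yields the energy estimate \eqref{eq:est_derivative_solution}; uniqueness follows from the same identity applied to the difference of two solutions.

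For the normal trace estimate $\|\p_\nu u\|_{L^2(\Sigma)}\le C\|F\|_{L^2(Q)}$, the standard approach is the multiplier method: extend the outward unit normal $\nu$ from $\p M$ to a smooth vector field $\mathbf X$ on $M$ and test the equation against $\mathbf X u$. After integration by parts over $Q$, the lateral boundary contribution collapses to an integral of $|\p_\nu u|^2$, since $u|_\Sigma=0$ forces $d_A u|_\Sigma=\p_\nu u\,d\nu$, while the remaining bulk terms are controlled by the energy from the previous step together with $\|F\|_{L^2(Q)}$. This is the hidden regularity identity used in Lasiecka--Lions--Triggiani and KKL Theorem 2.45.

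The higher regularity estimate \eqref{eq:est_H1_F} comes from differentiating the problem in time. Set $v=\p_t u$; then $v$ solves \eqref{eq:ibvp_general_F} with source $\p_t F$. The initial datum $v(0,\cdot)=\p_t u(0,\cdot)=0$ is inherited, and combining $u(0,\cdot)=0$ with the hypothesis $F(0,\cdot)=0$ yields $\p_t v(0,\cdot)=\p_t^2 u(0,\cdot)=F(0,\cdot)-(-\Delta_{g,A}+q)u(0,\cdot)=0$, so the homogeneous compatibility conditions required to apply the first step to $v$ are met. The resulting bound gives
\[
\|\p_t u(t,\cdot)\|_{H^1(M)}+\|\p_t^2 u(t,\cdot)\|_{L^2(M)}\le C\|\p_t F\|_{L^2(Q)}\le C\|F\|_{H^1(0,T;L^2(M))}.
\]
To extract $\|\Delta_g u(t,\cdot)\|_{L^2(M)}$, I would solve for $\Delta_g u$ directly from the equation via the decomposition \eqref{eq:def_magnetic_laplacian},
\[
\Delta_g u = \p_t^2 u + q u - F - 2i\n{A,du}_g + (id^\ast A+\n{A,A}_g)u,
\]
and bound each term in $L^2(M)$ using the $L^\infty$ bounds on $A$, $d^\ast A$, $q$ and the estimates already obtained. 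Continuity in $t$ valued in $H^2(M)$ then follows from elliptic regularity for the Dirichlet Laplacian, since $u|_\Sigma=0$ gives $\|u(t,\cdot)\|_{H^2(M)}\le C\|\Delta_g u(t,\cdot)\|_{L^2(M)}$.

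The main bookkeeping point I expect to handle with care is the time-differentiation step: verifying the compatibility condition $\p_t v(0,\cdot)=0$ is precisely where the hypothesis $F(0,\cdot)=0$ enters, and the Sobolev embedding on $A$ is what permits the absorption of the magnetic cross-terms into the energy inequality. Beyond these technical checks, the argument is a direct adaptation of KKL Theorems 2.30 and 2.45 to the present magnetic setting.
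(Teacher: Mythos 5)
The paper does not actually prove this lemma: it simply attributes it to \cite[Theorems 2.30 and 2.45]{KKL_book} and states the result. Your sketch is a correct rendering of the standard hyperbolic machinery behind those cited theorems — energy estimate by testing against $\p_t\bar u$ and Gr\"onwall, hidden regularity of $\p_\nu u$ by the vector-field multiplier method, and the $H^1(0,T;L^2)$-level estimate by differentiating in time and then recovering $\Delta_g u$ from the equation plus Dirichlet elliptic regularity — adapted to $\mathcal{H}_{g,A,q}$, and you correctly identify that the hypothesis $F(0,\cdot)=0$ is precisely what guarantees $\p_t^2 u(0,\cdot)=0$ so that $v=\p_t u$ satisfies the same homogeneous initial--boundary conditions.
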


For any function $f\in H^1(0,T;H^2(M)) \cap H^3(0,T;L^2(M))$, we define the norm $\|\cdot \|_\ast$ by the formula
\begin{equation}
\label{eq:def_star_norm}
\|f\|_\ast 
= \|f\|_{H^1(0,T;H^2(M))}+\|f\|_{H^3(0,T;L^2(M))}.
\end{equation}

We are now ready to state and prove the main result of this section. 

\begin{prop}
\label{prop:GO_solutions_initial}
Let $(M,g)$ be a simple Riemannian manifold of dimension $n\ge 2$. Let $A\in W^{1,\infty}(Q)$ and $q\in L^\infty(Q)$. Then the  initial  value problem 
\begin{equation}
\label{eq:ibvp_start}
\begin{cases}
\mathcal{H}_{g,A,q}u(t,x)=0 &\text{ in } Q,
\\
u(0,\cdot)=0, \quad \p_t u(0,\cdot)=0   &\text{ in } M,
\end{cases}
\end{equation} 
admits a solution $u\in  C^2(0,T;L^2(M)) \cap C(0,T;H^2(M))$ of the form
\begin{equation}
\label{eq:exp_grow_solution}
u(t,x;h)=e^{\frac{i(\psi(x)-t)}{h}} \alpha(t,x)\beta_A(t,x)+r(t,x;h), 
\quad  h>0.
\end{equation}
Here the remainder term $r \in   C^2(0,T;L^2(M)) \cap C(0,T;H^2(M))\cap C^1(0,T; H_0^1(M))$ satisfies the conditions
\[
r|_{\Sigma}=0  \quad \text{ and } \quad r|_{t=0}=\p_t r|_{t= 0}=0.
\]
Furthermore, there exists a constant $C=C(M,T)>0$ such that the following estimate holds for all $h>0$:
\begin{equation}
\label{eq:est_remainder}
\max_{t\in [0,T]}\left(h^{-1} \|r(t,\cdot)\|_{L^2(M)} + \|\p_t r(t,\cdot)\|_{L^2(M)} + \|\nabla_g r(t,\cdot)\|_{L^2(M)}\right)
\le 
C \|\alpha\|_{\ast}.
\end{equation}

The previous estimate still holds if we pose the terminal conditions $u(T,\cdot)=\p_t u(T, \cdot)=0$ instead of the initial conditions in \eqref{eq:ibvp_start}. In this case, the remainder $r(t,x)$ satisfies the conditions $r|_{t=T}=\p_t r|_{t= T}=0$.
\end{prop}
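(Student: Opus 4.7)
The plan is to follow the standard WKB ansatz adapted to the magnetic wave operator. Substituting $u=e^{i(\psi(x)-t)/h}\alpha(t,x)\beta_A(t,x)+r(t,x;h)$ into $\mathcal{H}_{g,A,q}$ and using $\partial_t(\psi-t)=-1$ and $d_x(\psi-t)=d\psi$, the coefficient of $h^{-2}$ becomes $|d\psi|_g^2-1$, which I force to vanish by solving the eikonal equation. On the simple manifold $(M,g)$ this is done by enlarging $M$ to a slightly bigger simple manifold $M_1\supset\supset M$, fixing $y_0\in M_1\setminus M$, and taking $\psi(x)=d_g(y_0,x)$, which is smooth on $M$ with $|d\psi|_g\equiv 1$. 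The coefficient of $h^{-1}$ equals
\[
-2i\partial_t(\alpha\beta_A)-2i\langle d\psi,d(\alpha\beta_A)\rangle_g-i(\Delta_g\psi)\alpha\beta_A+2\langle A,d\psi\rangle_g\alpha\beta_A,
\]
which I make vanish by splitting the product via Leibniz and imposing
\[
\partial_t\alpha+\langle d\psi,d\alpha\rangle_g+\tfrac12(\Delta_g\psi)\alpha=0,\qquad \partial_t\beta_A+\langle d\psi,d\beta_A\rangle_g+i\langle A,d\psi\rangle_g\beta_A=0.
\]
In geodesic polar coordinates centered at $y_0$, the operator $\partial_t+\langle d\psi,d\cdot\rangle_g$ is the directional derivative along the null bicharacteristics, which project to radial geodesics, so both transport equations reduce to linear first-order ODEs. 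The first can be integrated in terms of the Jacobian of the exponential map at $y_0$ and leaves $\alpha$ free on an incoming characteristic surface; the second integrates explicitly to $\beta_A=\exp\bigl(-i\int\langle A,\dot\gamma\rangle_g\,d\tau\bigr)$, which is precisely the factor that will produce the geodesic ray transform of $A$ in Section \ref{sec:proof_hyperbolic}. Choosing $\alpha$ to vanish in a neighborhood of $\{t=0\}$ ensures that the principal term vanishes there, which is compatible with $r|_{t=0}=\partial_tr|_{t=0}=0$.

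With the eikonal and transport equations satisfied, a direct computation gives $\mathcal{H}_{g,A,q}(e^{i(\psi-t)/h}\alpha\beta_A)=e^{i(\psi-t)/h}F_0$, where
\[
F_0=(\partial_t^2-\Delta_g)(\alpha\beta_A)-2i\langle A,d(\alpha\beta_A)\rangle_g+(id^*A+\langle A,A\rangle_g+q)\alpha\beta_A
\]
is independent of $h$ and bounded in $L^2(Q)$ by $C\|\alpha\|_\ast$, since $\beta_A$ is controlled in $W^{2,\infty}$ by $\|A\|_{W^{1,\infty}}$ and the geometry. Defining $r$ by $\mathcal{H}_{g,A,q}r=-e^{i(\psi-t)/h}F_0$ with $r|_{t=0}=\partial_tr|_{t=0}=0$ and $r|_\Sigma=0$ places us in the setting of Lemma \ref{lem:wellposedness}, which delivers $\|\partial_tr(t,\cdot)\|_{L^2(M)}+\|\nabla_gr(t,\cdot)\|_{L^2(M)}\le C\|F_0\|_{L^2(Q)}\le C\|\alpha\|_\ast$ uniformly in $h$.

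The main obstacle is the $L^2$ bound with a gain of one power of $h$, since a direct application of Lemma \ref{lem:wellposedness} only controls $\|r\|_{H^1}$, and the naive bound $\|r(t)\|_{L^2}\le T\|\partial_tr\|_{L^\infty_tL^2_x}$ is only of order one. To recover the missing factor of $h$ I would introduce a first-order WKB corrector $a_1$ solving a further inhomogeneous transport equation along the same characteristics with source proportional to $F_0$, chosen precisely to annihilate the $O(1)$ residual; with vanishing data near $\{t=0\}$, $a_1$ is smooth and controlled by $\|\alpha\|_\ast$. Writing the remainder as $r=he^{i(\psi-t)/h}a_1+\tilde r$ then gives $\mathcal{H}_{g,A,q}(e^{i(\psi-t)/h}(\alpha\beta_A+ha_1))=he^{i(\psi-t)/h}F_1$ with $\|F_1\|_{L^2(Q)}\le C\|\alpha\|_\ast$, so Lemma \ref{lem:wellposedness} applied to $\tilde r$ yields $\|\tilde r\|_{H^1(Q)}\le Ch\|\alpha\|_\ast$; the explicit piece $he^{i(\psi-t)/h}a_1$ contributes $O(h)$ to $\|r\|_{L^2}$ and $O(1)$ to $\|\partial_tr\|_{L^2}$ and $\|\nabla_gr\|_{L^2}$, and combining these estimates produces \eqref{eq:est_remainder}. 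The terminal-condition version follows by the change of variable $t\mapsto T-t$, which preserves the form of the ansatz up to complex conjugation of the phase.
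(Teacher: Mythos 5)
Your construction of the phase $\psi$ via the distance function on a simple extension $M_1$, the splitting of the $h^{-1}$ coefficient into two transport equations for $\alpha$ and $\beta_A$, and the role of $\beta_A = \exp(-i\int\langle A,\dot\gamma\rangle\,d\tau)$ all agree with the paper's argument. The identification of the main difficulty --- that Lemma \ref{lem:wellposedness} applied directly to the $O(1)$ source $e^{i(\psi-t)/h}F_0$ only yields $\|r\|_{H^1}\lesssim\|\alpha\|_\ast$, with no gain of $h$ in the $L^2$ norm --- is also correct.

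However, the mechanism you propose to recover the factor of $h$, namely a next-order WKB corrector $a_1$, does not close under the regularity budget of the $\|\cdot\|_\ast$ norm. The corrector $a_1$ must solve an inhomogeneous transport equation whose source is proportional to $F_0=\mathcal{H}_{g,A,q}(\alpha\beta_A)$. Since $\|\alpha\|_\ast$ controls $\alpha$ in $H^1(0,T;H^2(M))\cap H^3(0,T;L^2(M))$, the Laplacian in $F_0$ already consumes both available spatial derivatives, so $F_0$ is controlled only in $H^1(0,T;L^2(M))$ --- this is exactly the content of the paper's estimate \eqref{eq:est_R_and_time_derivative}. Integration along characteristics then gives $a_1$ no better than $L^2$ regularity in the radial variable, whereas the residual $F_1=\mathcal{H}_{g,A,q}(a_1)$ requires $\Delta_g a_1$, i.e.\ two additional spatial derivatives of $a_1$. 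Consequently the asserted bound $\|F_1\|_{L^2(Q)}\le C\|\alpha\|_\ast$ is unjustified, and indeed cannot hold in general: if it did, the conclusion would effectively require control of $\alpha$ in $H^4$ in space, while the proposition (and its downstream use in Lemma \ref{lem:DN_map_ray_transform} with $\Psi_1=\Psi_\delta$) relies on a bound depending only on $\|\alpha\|_\ast$.

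The paper sidesteps this entirely by a trick that costs only one extra \emph{time} derivative of $R_0$, which the $H^3(0,T;L^2)$ component of $\|\cdot\|_\ast$ affords. Since the coefficients are time-independent, the remainder can be written as $r=\partial_t v$, where $v$ solves \eqref{eq:ibvp_general_F} with source $V(t,x)=\int_0^t R(s,x)\,ds$. Writing $e^{i(\psi-s)/h}=ih\,\partial_s e^{i(\psi-s)/h}$ and integrating by parts in $s$ in the expression for $V$ produces the explicit factor of $h$, at the cost of $\|R_0\|_{L^2(Q)}$, $\|R_0(0,\cdot)\|_{L^2}$, and $\|\partial_t R_0\|_{L^2(Q)}$, all bounded by $C\|\alpha\|_\ast$. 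Applying \eqref{eq:est_derivative_solution} to $v$ then gives $\|r(t,\cdot)\|_{L^2(M)}=\|\partial_t v(t,\cdot)\|_{L^2(M)}\le C\|V\|_{L^2(Q)}\le Ch\|\alpha\|_\ast$. If you wish to keep the WKB-corrector viewpoint, you would need to enlarge the space of admissible amplitudes (or change the $\ast$-norm) so that two more spatial derivatives are available --- but that would no longer match the statement being proved.
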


\begin{proof} 

We shall only prove the claims for $t=0$, as the corresponding results for $t=T$ can be established from similar arguments. The proof follows the main ideas presented in \cite{Bellassoued_Ferreira}, see also \cite{Bellassoued,Bellassoued_Rezig}, and consists of several steps. We begin by finding a  phase function $\psi(x)$, followed by seeking the amplitude terms $\alpha(t,x)$ and $\beta_A(t,x)$. We conclude the proof with the construction of the remainder term $r(t,x)$, as well as the verification of the estimate \eqref{eq:est_remainder}.

Let us begin by applying the hyperbolic operator $\mathcal{H}_{g,A,q}$ to the ansatz $e^{\frac{i(\psi(x)-t)}{h}}\alpha(t,x)\beta_A(t,x)$ in \eqref{eq:exp_grow_solution}. Since the phase function $\psi(x)$ is independent of the time variable $t$, we have that
\begin{equation}
\label{eq:dt_square_term}
\begin{aligned}
\p_t^2\big(e^{\frac{i(\psi(x)-t)}{h}}\alpha(t,x)\beta_A(t,x)\big) =& e^{\frac{i(\psi(x)-t)}{h}}[ \p_t^2(\alpha \beta_A)
-2ih^{-1}(\beta_A\p_t\alpha + \alpha \p_t\beta_A)
-h^{-2} \alpha \beta_A].
\end{aligned}
\end{equation}
When hitting the ansatz with the Laplacian, we get 
\begin{equation}
\label{eq:Laplacian_term}
\begin{aligned}
&\Delta_g\big(e^{\frac{i(\psi(x)-t)}{h}}\alpha(t,x)\beta_A(t,x)\big) 
\\
= &e^{\frac{i(\psi(x)-t)}{h}} [\Delta_g (\alpha \beta_A)
+ 
h^{-1} \left(2i \n{\nabla_g \psi, \alpha \nabla_g \beta_A}_g
+2i \n{\nabla_g \psi, \beta_A \nabla_g\alpha}_g
+i\alpha \beta_A \Delta_g \psi \right)
\\
&-h^{-2} \alpha \beta_A |\nabla_g \psi|_g^2].
\end{aligned}
\end{equation}
For the first order term, we use \eqref{eq:def_magnetic_laplacian} and obtain the following via direct computations: 
\begin{equation}
\label{eq:first_order_term}
2i \left \langle A,d \left(e^{\frac{i(\psi(x)-t)}{h}}\alpha(t,x)\beta_A(t,x)\right) \right \rangle_g 
= 
2ie^{\frac{i(\psi(x)-t)}{h}} \left(\left \langle A,d(\alpha \beta_A)\right \rangle_g 
+
i h^{-1} \n{A,\alpha \beta_A d \psi}_g\right).
\end{equation}
Hence, by combining \eqref{eq:dt_square_term}--\eqref{eq:first_order_term}, we see that
\begin{equation}
\label{eq:operator_GO_comp}
\begin{aligned}
\mathcal{H}_{g,A,q} \big(e^{\frac{i(\psi(x)-t)}{h}} \alpha\beta_A\big) =&e^{\frac{i(\psi(x)-t)}{h}} [h^{-2} \alpha\beta_A (|\nabla_g\psi|_g^2-1)
\\
&+ 2i\beta_A h^{-1} (-\p_t \alpha -\n{\nabla_g \psi, \nabla_g\alpha}_g
- \frac{1}{2}\alpha   \Delta_g \psi)
\\
&+2i \alpha h^{-1} (-\p_t \beta_A - \n{\nabla_g \psi, \nabla_g \beta_A}_g- i\n{A,d \psi}_g\beta_A)
\\
&+\mathcal{H}_{g,A,q} (\alpha\beta_A)].
\end{aligned}
\end{equation}

Our goal is to choose the functions $\psi,\alpha$ and $\beta_A$ such that the terms involving $h^{-2}$ and $h^{-1}$ above will vanish. To achieve this,  \eqref{eq:operator_GO_comp} indicates that the phase function $\psi(x)$ needs to satisfy the eikonal equation
\begin{equation}
\label{eq:eikonal_eq}
|\nabla_g\psi|_g^2=1,
\end{equation}
while the amplitudes $\alpha$ and $\beta_A$ need to solve the following transport equations: 
\begin{equation}
\label{eq:transport_eq_alpha}
\p_t \alpha +\n{\nabla_g \psi, \nabla_g\alpha}_g+ \frac{1}{2}\alpha   \Delta_g \psi=0
\end{equation}
and
\begin{equation}
\label{eq:transport_eq_beta}
\p_t \beta_A + \n{\nabla_g \psi, \nabla_g \beta_A}_g+i\n{A, \nabla_g \psi}\beta_A=0.
\end{equation}

Since $(M,g)$ is simple, it has a simple extension $(M_1,g)$ such that $M \subset \subset M_1^{\mathrm{int}}$. Thanks to the simplicity of $M_1$, we may solve the eikonal and the transport equations using geodesic polar coordinates. To this end, since the exponential map at $y \in \p M_1$  is a diffeomorphism, any point $x\in M_1$ can be expressed as $x=\exp_y(r\theta)=\gamma_{y,\theta}(r)$, where 
$r>0$ and $\theta \in S_yM_1=\{\xi \in T_y M_1: |\xi|=1\}$.
Furthermore, by Gauss' Lemma in these coordinates, which depend on the choice of $y$, the metric $g$ can be written as
\[
\tilde{g}(r,\theta)=dr^2+g_0(r,\theta),
\]
where $g_0(r,\theta)$ is a smooth positive definite metric on $S_yM_1$, see \cite[Theorem 2.93]{gallot1990riemannian}.
Then for any compactly supported function $u$ in $M$, we set
\begin{equation}
\label{eq:notation_polar}
\tilde{u}(r,\theta) = u(\exp_y(r\theta))
\end{equation}
for any $r>0$ and $\theta \in S_yM_1$, where we have extended $u$ by zero in $M_1 \setminus M$.

We now aim to find a solution to the eikonal equation \eqref{eq:eikonal_eq}. To that end,  an explicit solution to \eqref{eq:eikonal_eq} is given by the geodesic distance function to $y\in \p M_1$
\[
\psi(x)=d_g(x,y).
\]
Since $M_1$ is a simple manifold and $y\in M_1 \setminus \overline{M}$, we see that $\psi \in C^\infty(M)$ and
\begin{equation}
\label{eq:phase_polar}
\tilde{\psi}(r,\theta)=r=d_g(x,y).
\end{equation}

Let us next solve the first transport equation \eqref{eq:transport_eq_alpha}. Note that, for any function $f(r)$ of the geodesic distance $r$, we have
\begin{equation}
\label{eq:Laplacian_polar}
\Delta_{\tilde{g}}f(r)=f''(r)+\frac{\rho^{-1}}{2}\p_r \rho\, f'(r) ,
\end{equation}
where $\rho(r, \theta) = \det g_0(r,\theta)$ is the square of the volume element of the metric in polar coordinates, see \cite[Section 6.5]{Canzani_notes}.
Hence, by incorporating \eqref{eq:notation_polar}, \eqref{eq:phase_polar}, and \eqref{eq:Laplacian_polar} for $f(r)=r$, we get from the transport equation \eqref{eq:transport_eq_alpha} that the function $\tilde \alpha (t, r, \theta)$ needs to satisfy the equation
\[
\p_t\tilde{\alpha} + \p_r \tilde{\alpha} + \frac{1}{4} \tilde \alpha \rho^{-1} \p_r \rho=0. 
\]

Let $\phi\in C_0^\infty(\R)$ and $\Psi \in C^\infty(\p_+SM_1) \cap H^2(\p_+SM_1)$. Using the same arguments as in \cite[Section 4]{Bellassoued_Ferreira}, we have that $\tilde \alpha(t,r,\theta)$ is given by the formula
\begin{equation}
\label{eq:alpha_solution}
\tilde \alpha(t,r,\theta;y)=\rho^{-\frac{1}{4}}(r,\theta)\phi(t-r)\Psi(y,\theta).
\end{equation}
Let $\varepsilon>0$ be such that $T>\diam(M_1)+3\varepsilon$. If we assume that $\supp \phi \subset (\varepsilon,2\varepsilon)$,
then for any $t<\varepsilon$, we have that $t-r<\varepsilon - r \leq \varepsilon$, which implies that $\phi(t-r)=0$.
Also, since $ r \in [0, \diam(M_1)]$, the assumption $t>T-\varepsilon$ implies that $t-r>T-\varepsilon-r\geq T-\varepsilon-\diam(M_1)>2\varepsilon$, from which we get that $\phi(t-r)=0$.
Therefore, we conclude that $\tilde{\alpha}(t,r,\theta)=0$ and $\p_t^j\tilde{\alpha}(t,r,\theta)=0$, $j=1,2$, for $t<\varepsilon$ and $t > T -\varepsilon$.

We now turn our attention to the second transport equation \eqref{eq:transport_eq_beta}. Due to Gauss' Lemma \cite[Theorem 6.9]{lee2018introduction}, we have $\nabla_g \psi=\dot \gamma_{y,\theta}(r)$, where $r$ is given by \eqref{eq:phase_polar}. Thus, it follows that
\begin{equation}
\label{eq:sigma_A_def}
\n{\tilde{A}(r,y,\theta),d \psi}_g = \n{ \tilde A^\sharp(\gamma_{y,\theta}(r)), \dot{\gamma}_{y,\theta}(r)}_g =: \tilde{\sigma}_A(r,y,\theta).
\end{equation}
We also get from the equation \eqref{eq:transport_eq_beta} that the function $\tilde \beta_A (t,r, \theta)$ must satisfy the equation
\[
\p_t \tilde \beta_A + \p_r \tilde \beta_A + i\tilde{\sigma}_A \tilde \beta_A=0.
\]
By a direct computation, we have that
\begin{equation}
\label{eq:beta_A_solution}
\tilde \beta_A(t,r,\theta; y)=\exp \left(-i\int_{0}^{t}\tilde \sigma_A(r-s, y,\theta)ds\right).
\end{equation}

Next we aim to show the existence of the remainder term $r(t,x)$, which satisfies the estimate \eqref{eq:est_remainder}. To this end, we  observe that the function $u(t,x)$  given by \eqref{eq:exp_grow_solution}  solves the   problem \eqref{eq:ibvp_start} if and only if $r(t,x)$ is a solution to the initial boundary value problem \eqref{eq:ibvp_general_F} with $F(t,x)=-\mathcal{H}_{g,A,q}\big(e^{\frac{i(\psi(x)-t)}{h}} \alpha(t,x) \beta_A(t,x) \big)$.
By Lemma \ref{lem:wellposedness}, 
we get  $r \in   C^2(0,T;L^2(M)) \cap C(0,T;H^2(M))\cap C^1(0,T; H_0^1(M))$ and  
\[
\|r(t,\cdot)\|_{H^1(M)}+\|\p_t r(t,\cdot)\|_{L^2(M)} \le C\|\mathcal{H}_{g,A,q}\big(e^{\frac{i(\psi(x)-t)}{h}}\alpha(t,x)\beta_A(t,x)\big)\|_{L^2(Q)}.
\]

In order to derive the suggested estimate \eqref{eq:est_remainder}, we begin by estimating the source term 
\begin{equation}
\label{eq:def_of_R}
R(t,x) :=-\mathcal{H}_{g,A,q}\big(e^{\frac{i(\psi(x)-t)}{h}} \alpha(t,x) \beta_A(t,x) \big)
\end{equation}
of the problem  \eqref{eq:ibvp_general_F} when $F=R$.  First, by taking into account that the functions $\psi(x)$, $\alpha(t,x)$, and $\beta_A(t,x)$ satisfy the eikonal and transport equations \eqref{eq:eikonal_eq}--\eqref{eq:transport_eq_beta}, respectively, it follows from \eqref{eq:operator_GO_comp} that 
\begin{align*}
R(t,x)
&= - e^{\frac{i(\psi(x)-t)}{h}} \mathcal{H}_{g,A,q} \left(\alpha(t,x)\beta_A(t,x)\right)
=:-e^{\frac{i(\psi(x)-t)}{h}} R_0(t,x).
\end{align*}
Due to the fact that $\mathcal{H}_{g,A,q}$ is a second order differential operator, we get the estimates
\[
\|R_0\|_{L^2(Q)}
\le
C\big(\|\alpha\|_{L^2(0,T;H^2(M))}+\|\alpha\|_{H^2(0,T;L^2(M))}\big)
\le
C \|\alpha\|_\ast
\]
and
\[
\|\p_t R_0\|_{L^2(Q)}
\le 
C\|R_0\|_{H^1(0,T;L^2(M))}
\le 
C\big(\|\alpha\|_{H^1(0,T;H^2(M))}
+
\|\alpha\|_{H^3(0,T;L^2(M))}\big)
\le
C\|\alpha\|_\ast.
\]
Thus, it follows immediately from the two estimates above that
\begin{equation}
\label{eq:est_R_and_time_derivative}
\|R_0\|_{L^2(Q)} + \|\p_t R_0\|_{L^2(Q)} 
\le 
C \|\alpha\|_{\ast}.
\end{equation}

Armed with this estimate, we next prove that
\begin{equation}
\label{eq:est_rem}
\max_{t\in [0,T]}\|r(t,\cdot)\|_{L^2(M)}
\le 
Ch \|\alpha\|_\ast.
\end{equation}
To obtain this, we  write $r=\p_t v$, where the function $v$ solves the problem \eqref{eq:ibvp_general_F} for some suitable interior source term $F=V$. Our goal is to show that the source term $V$ satisfies the estimate 
\begin{equation}
\label{eq:est_V_lambda}
\|V\|_{L^2(Q)} 
\le 
Ch\|\alpha\|_\ast,
\end{equation}
where $C$ depends on $T$. If so, the suggested estimate \eqref{eq:est_rem} follows from  
the fact that $r=\p_t v$, and in  view of the estimate \eqref{eq:est_derivative_solution} with $F=V$, we have that
\[
\max_{t \in [0,T]}\left(
\|v(t,\cdot)\|_{H^1(M)}
+
\|\p_t v(t,\cdot)\|_{L^2(M)}
\right)
\le
C\|V\|_{L^2(Q)}.
\]

Let us now show that the estimate \eqref{eq:est_V_lambda} is valid. Since the coefficients $A(x)$ and $q(x)$ are both time-independent,  direct computations yield that the function $v(t,x) :=\int_{0}^{t} r(s,x)ds$
satisfies the problem \eqref{eq:ibvp_general_F} with the interior source term
\begin{equation}
\label{eq:source_V}
V(t,x)=\int_0^t R(s,x)ds=-ih\int_0^t R_0(s,x)\p_s\left(e^{\frac{i(\psi(x)-s)}{h}}\right) ds.
\end{equation}
We integrate by parts in \eqref{eq:source_V} with respect to the $s$-variable and obtain
\begin{equation}
\label{eq:V_ibp}
V(t,x) 
=
-ih \left(R_0(t,x)e^{\frac{i(\psi(x)-t)}{h}}
-
R_0(0,x)e^{\frac{i\psi(x)}{h}}
-
\int_{0}^{t} e^{\frac{i(\psi(x)-s)}{h}}\p_sR_0(s,x)ds\right).
\end{equation}

By the trace theorem \cite[Section 5.5, Theorem 1]{Evans} and the fact that $\mathcal{H}_{g,A,q}$ is a second order partial differential operator, we deduce that
\begin{align*}
\|R_0(0,\cdot)\|_{L^2(Q)}^2
&= 
\int_{0}^T \int_M |R_0(0,x)|^2dV_gdt
\\
&= T \|R_0(0,\cdot)\|_{L^2(M)}^2
\\
& \le 
CT \|R_0(t,\cdot)\|^2_{H^{1}(0,T;L^2(M))}
\\
&\le
CT \left(\|\alpha\|_{H^1(0,T;H^2(M))}^2+\|\alpha\|_{H^3(0,T;L^2(M))}^2\right)
\\
& \le
C T\|\alpha\|_{\ast}^2.
\end{align*}
For the third term, we apply Fubini's theorem, in conjunction with the Cauchy-Schwarz inequality, to compute that
\begin{align*}
\int_0^T \int_M \int_{0}^{t} \big|\p_sR_0(s,x)\big|^2 dsdV_gdt
&=
\int_0^t \int_M \int_{0}^{T} \big|\p_sR_0(s,x)\big|^2 dtdV_gds
\\
&\le
\left(\int_0^T \int_M  \big|\p_sR_0(s,x)\big|^2dV_gds\right) \left(\int_{0}^{T}1 dt\right)
\\
&= T\|\p_tR_0\|_{L^2(Q)}^2.
\end{align*}

Since the phase function $\psi$ is real-valued,  by incorporating the two computations above, we conclude from \eqref{eq:V_ibp} that
\[
\|V\|_{L^2(Q)}^2
\le
Ch^2  \big(\|R_0\|_{L^2(Q)}^2
+T\|\alpha\|_\ast^2
+ T\left\|\p_tR_0\right\|_{L^2(Q)}^2 \big).
\]
Here we have utilized the inequality $(a+b+c)^2 \le 3(a^2+b^2+c^2)$ for   $a,b,c\in \R$. Whence, by the estimate \eqref{eq:est_R_and_time_derivative}, we get the estimate \eqref{eq:est_V_lambda}.

We still need to estimate $\|\p_t r(t,\cdot)\|_{L^2(M)}$ and $\|\nabla_g r(t,\cdot)\|_{L^2(M)}$ to complete the proof of the lemma. To this end, we recall that $R(t,x)=-e^{\frac{i(\psi(x)-t)}{h}} R_0(t,x)$. Due to the estimate \eqref{eq:est_R_and_time_derivative}, we have
\begin{equation}
\label{eq:est_R_L2}
\|R\|_{L^2(Q)} = \|R_0\|_{L^2(Q)}\le C \|\alpha\|_\ast.
\end{equation}
Thus, 
by Lemma \ref{lem:wellposedness}, after setting $F=R$, we get
\begin{equation}
\label{eq:est_derivatives_remainder}
\max_{t\in [0,T]}
\left(
\|\p_t r(t,\cdot)\|_{L^2(M)}+ \|\nabla_g r(t,\cdot)\|_{L^2(M)} 
\right)
\le 
C \|R\|_{L^2(Q)} 
\le 
C\|\alpha\|_{\ast}.
\end{equation}
Finally, we obtain the estimate \eqref{eq:est_remainder} by combining \eqref{eq:est_rem} and \eqref{eq:est_derivatives_remainder}. This completes the proof of Proposition \ref{prop:GO_solutions_initial}.
\end{proof}

For later purposes, let us also derive estimates for $\|r(t,\cdot)\|_{H^2(M)}$ and $\|\p_t r(t, \cdot)\|_{H^1(M)}$. We compute from \eqref{eq:def_of_R} that
\[
\p_t R = ih^{-1} e^{\frac{i(\psi(x)-t)}{h}} R_0 -e^{\frac{i(\psi(x)-t)}{h}} \p_t R_0.
\]
Thus, we combine the estimates \eqref{eq:est_R_and_time_derivative} and \eqref{eq:est_R_L2} to obtain 
\begin{equation}
\label{eq:dtR}
\begin{aligned}
\|\p_t R\|_{L^2(Q)} 
&\le
Ch^{-1}\|\alpha\|_\ast.
\end{aligned}
\end{equation}
Consequently, it follows  from \eqref{eq:est_R_L2} and \eqref{eq:dtR} that
\[
\|R\|_{L^2(Q)}+\|\p_t R\|_{L^2(Q)}
\le
Ch^{-1}\|\alpha\|_\ast,
\]
which implies that
\[
\|R\|_{H^1(0,T;L^2(M))}\le 
Ch^{-1}\|\alpha\|_\ast.
\]

Since the remainder $r$ solves the problem \eqref{eq:ibvp_general_F}  for $F=R$, an application of the estimate \eqref{eq:est_H1_F} yields that 
\begin{equation}
\label{eq:est_r_2nd_derivative}
\max_{t\in [0,T]}
\left(
\|\p_t^2 r(t,\cdot)\|_{L^2(M)}
+
\|\nabla_g (\p_t r(t,\cdot))\|_{L^2(M)}
+
\|\Delta_g  r(t,\cdot)\|_{L^2(M)}
\right)
\le
Ch^{-1}\|\alpha\|_\ast.
\end{equation}
Hence, by combining estimates \eqref{eq:est_rem}, \eqref{eq:est_derivatives_remainder}, and \eqref{eq:est_r_2nd_derivative}, we obtain that
\begin{equation}
\label{eq:est_remainder_H2}
\max_{t\in [0,T]}
\|r(t,\cdot)\|_{H^2(M)}
\le
Ch^{-1}\|\alpha\|_\ast.
\end{equation}
Furthermore, we deduce from the estimates \eqref{eq:est_derivatives_remainder} and \eqref{eq:est_r_2nd_derivative} that
\begin{equation}
\label{eq:est_time_der_remainder_H1}
\begin{aligned}
\max_{t\in [0,T]}
\|\p_t r(t,\cdot)\|_{H^1(M)}
&\leq
C\max_{t\in [0,T]}
\left(\|\p_t r(t,\cdot)\|_{L^2(M)}
+
\|\nabla_g (\p_t r(t,\cdot))\|_{L^2(M)}\right)
\leq 
Ch^{-1}\|\alpha\|_\ast.
\end{aligned}
\end{equation}

\section{Proof of Theorem \ref{thm:main_result_hyperbolic}}
\label{sec:proof_hyperbolic}

In this section we aim to establish H\"older-type estimates for the solenoidal part $A^s$ of the magnetic potential $A$, as well as the electric potential $q$, from the hyperbolic Dirichlet-to-Neumann map $\Lambda_{A,q}$ defined by the formula \eqref{eq:def_hyperbolic_DN_map}. The proof relies on the GO solutions constructed in Section \ref{sec:GO_solution}, as well as an integral identity. We shall complete the proof in two steps, which we present in succeeding subsections. We first derive a H\"older-type stability for $A^s$ in Subsection \ref{subsec:proof_solenoidal_part_A}, where the amplitude term $\beta_A$ in the GO solutions \eqref{eq:exp_grow_solution} provides information about the geodesic ray transform of the magnetic potential. Afterwards, we apply \cite[Theorem 4]{Stefanov_Uhlmann_xray_transform} to obtain the desired estimate. In Subsection \ref{subsec:proof_electric_hyperbolic} we provide a proof for the stability of the electric potential $q$, where we   obtain information pertaining to the geodesic ray transform of $q$ from the amplitude term $\alpha$ in the GO solution.

Let the Riemannian manifold $(M,g)$, covector fields $A_1$ and $A_2$, as well as the functions $q_1$ and $q_2$ in $M$ be as in the assumptions of Theorem \ref{thm:main_result_hyperbolic}. 
As in the previous section, we extend the manifold $M$ to a slightly larger simple manifold $M_1$ such that $M\subset \subset M_1^{\mathrm{int}}$ and $T>\diam(M_1)$. Due to the assumptions that $A_1=A_2$ and $q_1=q_2$ on $\p M$, these coefficients have $H^1$-extensions to $M_1$ so that $A_1=A_2$ and $q_1=q_2$ in $M_1\setminus M$.

In the remainder of this section, let us denote $A=A_1-A_2$ and $q=q_1-q_2$. Thus, we have that $A\in H^1(M_1, T^\ast M_1)$ and $q\in H^1(M_1)$, and both of them vanish outside $M$.

\subsection{Stability estimate for the solenoidal part of the magnetic potential}
\label{subsec:proof_solenoidal_part_A}

Our goal of this subsection is to prove the following H\"older-type stability estimate 
\begin{equation}
\label{eq:est_As}
\|A_1^s-A_2^s\|_{L^2(M)}
\le
C\|\Lambda_{A_2,q_2}-\Lambda_{A_1,q_1}\|^{\frac{1}{12}}.
\end{equation}
Here the constant $C$ depends on $(M,g), N$, $n$, and $T$.

We begin this subsection with the derivation of an integral identity. To that end, we shall make use of the following Green's identity for the magnetic Laplacian $\Delta_{g,A}$, which was established for instance in \cite[Section 2]{Bellassoued_Aicha}:
\begin{equation}
\label{eq:Green_mag_Laplacian}
\int_M v \Delta_{g,A} u dV_g 
-
\int_M u \overline{\Delta_{g,A}v}  dV_g
=
\int_{\p M} \overline{v}(\p_\nu +i \n{A,\nu}_g)u dS_g
-
\int_{\p M} \overline{u} \overline{(\p_\nu +i \n{A,\nu}_g)v} dS_g.
\end{equation}
Let us remark that this identity is valid for any functions $u,v\in H^1(M)$ such that $\Delta_g u, \Delta_g v\in L^2(M)$. 

Let $u_2 \in  C^2(0,T;L^2(M)) \cap C(0,T;H^2(M))$ be the GO solution for the problem \eqref{eq:ibvp_start} with $A=A_2$ and $q=q_2$. If we set $f:=u_2|_{\Sigma}$,
then $u_2$ solves the initial boundary value problem
\begin{equation}
\label{eq:ibvp_u2}
\begin{cases}
\mathcal{H}_{g,A_2,q_2}u_2=0   & \hbox{ in } Q,
\\
u_2(0,\cdot)=0, \quad \p_tu_2(0,\cdot)=0 & \hbox{ in } M,
\\
u_2=f & \hbox{ on } \Sigma.
\end{cases}
\end{equation}
On the other hand, if we choose $v$ to be the solution of the problem
\begin{equation}
\label{eq:ibvp_v}
\begin{cases}
\mathcal{H}_{g,A_1,q_1}v=0 & \hbox{ in } Q,
\\
v(0,\cdot)=0, \quad \p_tv(0,\cdot)=0 & \hbox{ in } M,
\\
v=f & \hbox{ on } \Sigma,
\end{cases}
\end{equation} 
then it is straightforward to check that the function $u:=u_2-v$ satisfies the problem
\begin{equation}
\label{eq:ibvp_difference}
\begin{cases}
\mathcal{H}_{g,A_1,q_1}u=-2i\n{A, du_2}_g+(-id^\ast A+|A_1|^2_g-|A_2|^2_g+q)u_2 & \hbox{ in } Q,
\\
u(0,\cdot)=0, \quad \p_tu(0,\cdot)=0 \quad & \hbox{ in } M,
\\
u=0 & \hbox{ on } \Sigma.
\end{cases}
\end{equation}
Since $-2i\n{A, du_2}_g+(-id^\ast A+|A_1|^2_g-|A_2|^2_g+q)u_2 \in C(0,T;H^1(M))$, it follows from Lemma \ref{lem:wellposedness} that $u\in C^1(0,T;L^2(M))\cap C(0,T;H^1(M))$.
Moreover, by Proposition  \ref{prop:GO_solutions_initial}, there exists a GO solution $u_1\in C^2(0,T;L^2(M)) \cap C(0,T;H^2(M))$ to the problem
\[
\begin{cases}
\mathcal{H}_{g,A_1,q_1}  u_1=0 & \hbox{ in } Q,
\\
u_1(T,\cdot)=0, \quad \p_t u_1(T,\cdot)=0 & \hbox{ in } M.
\end{cases}
\]

We now multiply both sides of the first equation in \eqref{eq:ibvp_difference} by  $\overline{u_1}$ and integrate over the spacetime $Q$. Since $u(0,\cdot)=\p_tu(0,\cdot)=0$ and $u_1(T,\cdot)=\p_t u_1(T,\cdot)=0$, we deduce from Green's identity \eqref{eq:Green_mag_Laplacian} that
\begin{align*}
\int_Q u_1 \mathcal{H}_{g,A_1,q_1}u dV_g dt
&=
\int_Q u_1 \mathcal{H}_{g,A_1,q_1}u dV_g dt
-
\int_Q \overline{u} \overline{\mathcal{H} _{g,A_1,q_1}u_1}  dV_g dt
\\
&=
\int_{\Sigma} \overline{u_1}(\p_\nu +i \n{A_1,\nu}_g)u dS_gdt
-
\int_{\Sigma} \overline{u} \overline{(\p_\nu +i \n{A_1,\nu}_g)u_1} dS_gdt
\\
&=
\int_{\Sigma} \overline{u_1} \p_\nu u dS_gdt.
\end{align*}
Here we have used  that the operator $\mathcal{H} _{g,A_1,q_1}$ is self-adjoint, and that $u=0$ on $\Sigma$.

Furthermore, the equation
\begin{align*}
(\Lambda_{A_2,q_2}-\Lambda_{A_1,q_1})(f)
=&
[(\p_\nu +i \n{A_1,\nu}_g)u_2]|_{\Sigma}-[(\p_\nu +i \n{A_2,\nu}_g)v]|_{\Sigma}
=
\p_\nu u |_{\Sigma}.
\end{align*}
holds as the functions $u_2$ and $v$ solve the initial boundary value problems \eqref{eq:ibvp_u2} and \eqref{eq:ibvp_v}, respectively, 
and satisfy the boundary condition $u_2|_{\Sigma}=v|_{\Sigma}=f$. Here we have also used the definition \eqref{eq:def_hyperbolic_DN_map} of the Dirichlet-to-Neumann map, as well as the assumption that $A_1=A_2$ on $\p M$.
Finally, as $u$ satisfies the problem \eqref{eq:ibvp_difference}, we obtain the following integral identity as a consequence of the two previous equations: 
\begin{equation}
\label{eq:int_id}
\begin{aligned}
&\int_Q -2i\n{A, du_2}_g \overline{u_1} dV_gdt
\\
&= 
\int_{\Sigma}
\overline{u_1}(\Lambda_{A_2,q_2}-\Lambda_{A_1,q_1})(f)  dS_gdt
-
\int_Q (-id^\ast A+|A_1|^2_g-|A_2|^2_g+q) \overline{u_1} u_2 dV_gdt.
\end{aligned}
\end{equation}

We are now ready to state the first estimate to be used to prove the main estimate \eqref{eq:est_As}.

\begin{lem}
\label{lem:int_estimate}
There exists a constant $C>0$ such that for any amplitudes $\alpha_j$ and $\beta_{A_j}$, $j=1,2$, which satisfy the transport equations \eqref{eq:transport_eq_alpha} and \eqref{eq:transport_eq_beta}, respectively, the estimate
\begin{equation}
\label{eq:int_est_amplitudes}
\left|\int_Q \n{A,d\psi}_g \left(\overline{\alpha_1}\alpha_2\right) \left(\beta_{A_2}\overline{\beta_{A_1}}\right) dV_gdt\right| 
\le 
C\left(h+h^{-2} \|\Lambda_{A_2,q_2}-\Lambda_{A_1,q_1}\|_{\mathcal{L}(L^2(\Sigma),H^{-1}(\Sigma))} \right)\|\alpha_1\|_{\ast} \|\alpha_2\|_{\ast}
\end{equation}
holds for $0<h < 1$.
\end{lem}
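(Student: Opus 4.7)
The plan is to insert the geometric optics solutions constructed in Proposition \ref{prop:GO_solutions_initial} into the integral identity \eqref{eq:int_id} and to isolate the leading-order contribution in $h$ as the target integral. Take $u_2 = e^{i(\psi-t)/h}\alpha_2\beta_{A_2} + r_2$ to be the forward GO solution for $\mathcal{H}_{g,A_2,q_2}$ with zero Cauchy data at $t=0$, and $u_1 = e^{i(\psi-t)/h}\alpha_1\beta_{A_1} + r_1$ to be the backward GO solution for $\mathcal{H}_{g,A_1,q_1}$ with zero Cauchy data at $t=T$. Since the cutoff $\phi$ in \eqref{eq:alpha_solution} is supported in $(\varepsilon,2\varepsilon)$, both $u_j$ and their traces vanish near $t=0$ and $t=T$, so $\overline{u_1}|_\Sigma \in H^1_0(\Sigma)$, which is required in order to pair it against the $H^{-1}(\Sigma)$-valued Dirichlet-to-Neumann difference.

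Next I would compute $du_2$ by the product rule. Of the three contributions $i h^{-1}e^{i(\psi-t)/h}(d\psi)\alpha_2\beta_{A_2}$, $e^{i(\psi-t)/h}d(\alpha_2\beta_{A_2})$, and $dr_2$, only the first carries the singular factor $h^{-1}$. After multiplication by $-2i\overline{u_1}$ and integration over $Q$, the phases cancel between this singular piece and $e^{-i(\psi-t)/h}\overline{\alpha_1\beta_{A_1}}$, producing the principal term
\[
\frac{2}{h}\int_Q \langle A, d\psi\rangle_g\,(\overline{\alpha_1}\alpha_2)(\beta_{A_2}\overline{\beta_{A_1}})\,dV_g\,dt,
\]
which is $2h^{-1}$ times the quantity to be estimated. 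The remaining cross terms are lower order in $h$: those not involving any $r_j$ are $O(\|\alpha_1\|_\ast\|\alpha_2\|_\ast)$ using the $L^\infty$ bounds on $A$ (Sobolev embedding, since $\lceil n/2\rceil + 1 > n/2$) and on $\beta_{A_j}$ (from the explicit formula \eqref{eq:beta_A_solution}); those involving $r_1$ or $dr_2$ are handled by the remainder bounds \eqref{eq:est_remainder} and \eqref{eq:est_derivatives_remainder}, so that the $h$-gain on $\|r_j\|_{L^2(Q)}$ cancels any leftover $h^{-1}$ from $d\psi$.

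For the right-hand side of \eqref{eq:int_id}, the bulk term $\int_Q(-id^\ast A + |A_1|_g^2 - |A_2|_g^2 + q)\overline{u_1}u_2\,dV_g\,dt$ enjoys full phase cancellation, and is $O(\|\alpha_1\|_\ast\|\alpha_2\|_\ast)$ thanks to the $L^\infty$ bounds on the coefficients. For the boundary term $\int_\Sigma \overline{u_1}(\Lambda_{A_2,q_2}-\Lambda_{A_1,q_1})(f)\,dS_g\,dt$ with $f = u_2|_\Sigma$, duality gives
\[
\left|\int_\Sigma \overline{u_1}(\Lambda_{A_2,q_2}-\Lambda_{A_1,q_1})(f)\,dS_g\,dt\right| \le \|\overline{u_1}|_\Sigma\|_{H^1(\Sigma)}\,\|\Lambda_{A_2,q_2}-\Lambda_{A_1,q_1}\|_{\mathcal{L}(L^2(\Sigma),H^{-1}(\Sigma))}\,\|f\|_{L^2(\Sigma)}.
\]
The factor $\|f\|_{L^2(\Sigma)}$ is bounded by $C\|\alpha_2\|_\ast$ via the trace theorem and boundedness of $\beta_{A_2}$, while $\|\overline{u_1}|_\Sigma\|_{H^1(\Sigma)} \le Ch^{-2}\|\alpha_1\|_\ast$ by trace together with the $H^2(M)$ and $H^1(M)$ estimates \eqref{eq:est_remainder_H2}, \eqref{eq:est_time_der_remainder_H1} applied to $u_1$, since each derivative of the oscillatory factor $e^{i(\psi-t)/h}$ produces a power of $h^{-1}$.

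Collecting all of the above estimates in \eqref{eq:int_id} and multiplying through by $h/2$ yields the desired inequality \eqref{eq:int_est_amplitudes}. The main obstacle is the careful bookkeeping of $h$-powers across the many cross terms that arise when one expands the product $\langle A, du_2\rangle_g\,\overline{u_1}$: every derivative falling on the exponential introduces an $h^{-1}$, and it is precisely the strength of the remainder estimates in Proposition \ref{prop:GO_solutions_initial} — in both $L^2$ and $H^2$ norms — that ensures every non-principal contribution remains at worst $O(1)$ in $h$ after the overall rescaling, so that only the two advertised terms of sizes $h$ and $h^{-2}\|\Lambda_{A_2,q_2}-\Lambda_{A_1,q_1}\|$ survive.
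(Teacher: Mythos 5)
Your proposal follows exactly the strategy of the paper: substitute the two GO solutions into the integral identity \eqref{eq:int_id}, multiply by $h$ to normalize, isolate the single $h^{-1}$-singular term from $\langle A, du_2\rangle_g\,\overline{u_1}$ as the target integral, use the $L^\infty$ control on $A$ and $\beta_{A_j}$ and the remainder estimate \eqref{eq:est_remainder} to bound all other bulk contributions by $Ch\|\alpha_1\|_\ast\|\alpha_2\|_\ast$, and then handle the lateral-boundary term by the $H^{-1}$--$H^1_0$ duality pairing, using \eqref{eq:est_remainder_H2} and \eqref{eq:est_time_der_remainder_H1} to estimate $\|u_1|_\Sigma\|_{H^1(\Sigma)}\le Ch^{-2}\|\alpha_1\|_\ast$. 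Your observation that the compact time-support of $\alpha_j$ is precisely what places $\overline{u_1}|_\Sigma$ in $H^1_0(\Sigma)$ so the duality pairing is legitimate is also in the paper (implicitly, via ``$u_j|_\Sigma$ is compactly supported'').

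One small place where you diverge from the paper's written computation, to your benefit: you estimate $\|f\|_{L^2(\Sigma)}\le C\|\alpha_2\|_\ast$ by first using $|e^{i(\psi-t)/h}|=|\beta_{A_2}|=1$ on $\Sigma$ so that $|f|=|\alpha_2|$ pointwise, and only then invoking the trace theorem on $\alpha_2$; the paper instead applies the trace estimate to the full oscillatory product and gets the cruder bound $Ch^{-1}\|\alpha_2\|_\ast$ (their \eqref{eq:est_u1_boundary}). With your version, the boundary contribution after the overall $h$-rescaling is $O(h^{-1}\|\Lambda_{A_2,q_2}-\Lambda_{A_1,q_1}\|)$, which is strictly sharper than the advertised $h^{-2}$ term and of course still implies \eqref{eq:int_est_amplitudes}. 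This is a genuine (if modest) improvement. (Also, the overall sign in the principal term of your expansion, $+2h^{-1}\langle A,d\psi\rangle_g(\overline{\alpha_1}\alpha_2)(\beta_{A_2}\overline{\beta_{A_1}})$, is in fact the correct one; the paper's \eqref{eq:computation_du2u1} appears to carry a sign typo, but this is of course immaterial after taking absolute values.)
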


\begin{proof}
By Proposition \ref{prop:GO_solutions_initial}, there exist GO solutions $u_1$ and $u_2$ to the equations  $\mathcal{H}_{g,A_1,q_1}  u_1=0$ and  $\mathcal{H}_{g,A_2,q_2}u_2=0$ in $Q$, respectively, which are of the forms
\begin{equation}
\label{eq:solution_v}
u_1(t,x)=e^{\frac{i(\psi(x)-t)}{h}}\alpha_1(t,x)\beta_{A_1}(t,x)+r_1(t,x)
\end{equation}
and
\begin{equation}
\label{eq:solution_u2}
u_2(t,x)=e^{\frac{i(\psi(x)-t)}{h}}\alpha_2(t,x)\beta_{A_2}(t,x)+r_2(t,x).
\end{equation}
Here the remainder $r_i$, $i=1,2$, satisfies the estimate
\begin{equation}
\label{eq:est_r1}
h^{-1} \|r_i(t,\cdot)\|_{L^2(M)} + \|\p_t r_i(t,\cdot)\|_{L^2(M)} + \|\nabla_g r_i(t,\cdot)\|_{L^2(M)}
\le 
C \|\alpha_i\|_{\ast} 
\quad 
\text{ for } 0<h<1,
\end{equation}
where the norm $\| \cdot \|_\ast$ is given by \eqref{eq:def_star_norm}.

We now substitute the GO solutions $u_1$ and $u_2$ into the integral identity \eqref{eq:int_id},  multiply the resulting expression by $h$, and proceed to estimate each term appearing there.
To this end, it follows from direct computations that
\begin{equation}
\label{eq:computation_du2u1}
\begin{aligned}
-2ih\n{A, du_2}_g\overline{u_1}
= &
-2\n{A,d\psi}_g(\overline{\alpha_1}\alpha_2)(\beta_{A_2}\overline{\beta_{A_1}}) 
-
2 \overline{r}_1 e^{\frac{i(\psi(x)-t)}{h}} \n{A,d\psi}_g (\alpha_2\beta_{A_2}) 
\\
&- 2ih(\overline{\alpha_1\beta_{A_1}})\n{A,d(\alpha_2\beta_{A_2})}_g
-
2ih\overline{r}_1\n{A,d(\alpha_2\beta_{A_2})}_g 
\\
&-
2ihe^{\frac{i(\psi(x)-t)}{h}} \n{A,dr_2}_g(\overline{\alpha_1\beta_{A_1}})
-
2ih\overline{r}_1\n{A, dr_2}_g.
\end{aligned}
\end{equation}
Therefore, we have 
\begin{equation}
\label{eq:int_id_GO}
\begin{aligned}
&\int_Q 	-2\n{A,d\psi}_g(\overline{\alpha_1}\alpha_2)(\beta_{A_2}\overline{\beta_{A_1}}) dV_gdt
\\
= & \int_Q -2\n{A,d\psi}_g(\overline{\alpha_1}\alpha_2)(\beta_{A_2}\overline{\beta_{A_1}}) dV_gdt
-
\int_Q 2 \overline{r}_1 e^{\frac{i(\psi(x)-t)}{h}} \n{A,d\psi}_g (\alpha_2\beta_{A_2}) dV_gdt
\\
&- \int_Q 2ih(\overline{\alpha_1\beta_{A_1}})\n{A,d(\alpha_2\beta_{A_2})}_g dV_gdt
-
\int_Q 2ih\overline{r}_1\n{A,d(\alpha_2\beta_{A_2})}_g dV_gdt
\\
&-
\int_Q 2ihe^{\frac{i(\psi(x)-t)}{h}} \n{A,dr_2}_g(\overline{\alpha_1\beta_{A_1}}) dV_gdt
-
\int_Q 2ih\overline{r}_1\n{A, dr_2}_g dV_gdt
\\
&-
\int_Q (-id^\ast A+|A_1|^2_g-|A_2|^2_g+q) \left(\overline{\alpha_1 \beta_{A_1}} +\overline{r_1}\right)
\left(\alpha_2 \beta_{A_2}+r_2\right) dV_gdt
\\
&+
\int_{\Sigma} (\Lambda_{A_2,q_2}-\Lambda_{A_1,q_1})(f) \overline{u_1} dS_gdt.
\end{aligned}
\end{equation}

Let us estimate each term on the right-hand side of \eqref{eq:int_id_GO}. To this end, by applying the Cauchy-Schwarz inequality, along with  the estimate \eqref{eq:est_r1}, we obtain
\begin{equation}
\label{eq:est_LHS_term1}
\begin{aligned}
\left|\int_Q 2 \overline{r}_1 e^{\frac{i(\psi(x)-t)}{h}} \n{A,d\psi}_g (\alpha_2\beta_{A_2})  dV_gdt\right|
&\le 
C\int_0^T \|r_1(t,\cdot)\|_{L^2(M)} \|\alpha_2(t,\cdot)\|_{L^2(M)}dt 
\\
&\le
Ch\|\alpha_1\|_\ast \|\alpha_2\|_\ast.
\end{aligned}
\end{equation}
In the computations above, we also used the assumption that $A_1, A_2\in \mathcal{A}(\lceil \frac{n}{2}\rceil+1, N)$, together with the equations \eqref{eq:sigma_A_def} and  \eqref{eq:beta_A_solution}, as well as the Sobolev embedding theorem, to bound the terms $\|\n{A,d\psi}_g\|_{L^\infty(Q)}$ and $\|\beta_{A_2}\|_{L^\infty(Q)}$.

Similarly, we find that
\begin{equation}
\label{eq:est_LHS_term2}
\begin{aligned}
\left|\int_Q 2i h\left(\overline{\alpha_1\beta_{A_1}}\right)
\n{A,d(\alpha_2\beta_{A_2})}_g dV_gdt\right|
&\le 
Ch\int_0^T \|\alpha_1(t,\cdot)\|_{L^2(M)} \|\alpha_2(t,\cdot)\|_{H^1(M)}d 
\\
&\le
Ch\|\alpha_1\|_\ast \|\alpha_2\|_\ast.
\end{aligned}
\end{equation}
Moreover, we apply the estimate \eqref{eq:est_r1} once again to see that
\begin{equation}
\label{eq:est_LHS_term3}
\begin{aligned}
\left|\int_Q 2i h\overline{r}_1\n{A,d(\alpha_2\beta_{A_2})}_g dV_gdt  \right|
&\le 
Ch\int_0^T \|r_1 (t,\cdot)\|_{L^2(M)} \|\alpha_2(t,\cdot)\|_{H^1(M)}dt 
\\
&\le
Ch^2\|\alpha_1\|_\ast \|\alpha_2\|_\ast.
\end{aligned}
\end{equation}
Furthermore, using  \eqref{eq:est_r1}, we establish the inequality
\begin{equation}
\label{eq:est_LHS_term4}
\begin{aligned}
\left|\int_Q 2i he^{\frac{i(\psi(x)-t)}{h}} \n{A,dr_2}_g(\overline{\alpha_1\beta_{A_1}}) dV_gdt \right|
&\le 
Ch\int_0^T \|\nabla_g r_2 (t,\cdot)\|_{L^2(M)} \|\alpha_1(t,\cdot)\|_{L^2(M)}dt 
\\
&\le
Ch\|\alpha_1\|_\ast \|\alpha_2\|_\ast.
\end{aligned}
\end{equation}
Additionally, by \eqref{eq:est_r1}, we get
\begin{equation}
\label{eq:est_LHS_term5}
\begin{aligned}
\left|\int_Q 2i h\overline{r}_1\n{A, dr_2}_g  dV_gdt \right|
&\le 
Ch\int_0^T \|\nabla_g r_2 (t,\cdot)\|_{L^2(M)} \|r_1(t,\cdot)\|_{L^2(M)}dt 
\\
&\le
Ch^2\|\alpha_1\|_\ast \|\alpha_2\|_\ast.
\end{aligned}
\end{equation}
Finally, we apply the Cauchy-Schwarz inequality and  the estimate \eqref{eq:est_r1} again, along with the assumption $q_1,q_2 \in \mathcal{Q}(N)$, to deduce  
\begin{equation}
\label{eq:est_aux_term}
\begin{aligned}
&\left|h \int_Q \left(-id^\ast A+|A_1|^2_g-|A_2|^2_g+q\right) \left(\overline{\alpha_1 \beta_{A_1}} +\overline{r_1}\right)
\left(\alpha_2 \beta_{A_2}+r_2\right) dV_gdt\right| 
\\
&\le 
Ch \int_0^T \left(\|\alpha_1(t,\cdot)\|_{L^2(M)} \|\alpha_2(t,\cdot)\|_{L^2(M)}
+
\|\alpha_1(t,\cdot)\|_{L^2(M)} \|r_2(t,\cdot)\|_{L^2(M)}
\right.
\\
&\left. \quad \quad \quad \quad \quad +
\|r_1(t,\cdot)\|_{L^2(M)} \|\alpha_2(t,\cdot)\|_{L^2(M)}
+
\|r_1(t,\cdot)\|_{L^2(M)} \|r_2(t,\cdot)\|_{L^2(M)}\right)dt
\\
&\le 
Ch\|\alpha_1\|_\ast \|\alpha_2\|_\ast.
\end{aligned}
\end{equation}

We now proceed to estimate the boundary term in \eqref{eq:int_id_GO}. To this end, since $r_j|_{\Sigma}=0$, $j=1,2$, we have that
\[
u_j|_{\Sigma} = e^{\frac{i(\psi(x)-t)}{h}}\alpha_j(t,x)\beta_{A_j}(t,x), \quad (t,x) \in \Sigma,
\]
where $\alpha_j$ is given by \eqref{eq:alpha_solution} in geodesic polar coordinates. We also recall that
$\alpha_j(t,x)=0$ when $t <\varepsilon$ or $t>T-\varepsilon$ for $\varepsilon>0$ such that $T>\diam(M_1)+2\varepsilon$. Therefore, $u_j|_{\Sigma}$ is compactly supported. Furthermore, we note that the space $H^1(\Sigma):= L^2(0,T;H^1(\p M)) \cap H^1(0,T;L^2(\p M))$ is  equipped with the norm
\begin{equation}
\label{eq:H_1_norm_in_Sigma}
\|u\|_{H^1(\Sigma)}^2
:= 
\|u\|_{L^2(0,T;H^1(\p M))}^2+ \|u\|_{H^1(0,T;L^2(\p M))}^2.
\end{equation}

As in the derivation of the integral identity \eqref{eq:int_id}, we again denote 
$
f:=u_2|_{\Sigma}.
$
Then we apply \cite[Theorem 2.30]{KKL_book} to the initial boundary value problem \eqref{eq:ibvp_u2} to get $(\Lambda_{A_2,q_2}-\Lambda_{A_1,q_1})(f) \in L^2(\Sigma)$. Thus, an application of \cite[Section 5.9, Theorem 1]{Evans}
gives us 
\begin{align*}
\left|\int_{\Sigma} h (\Lambda_{A_2,q_2}-\Lambda_{A_1,q_1})(f) \overline{u_1} dS_gdt \right|
& \le
Ch
\|(\Lambda_{A_2,q_2}-\Lambda_{A_1,q_1})(f)\|_{H^{-1}(\Sigma)} 
\|u_1\|_{H^1(\Sigma)}
\\
& \le 
C h\|\Lambda_{A_2,q_2}-\Lambda_{A_1,q_1}\|_{\mathcal{L}(L^2(\Sigma),H^{-1}(\Sigma))} \|f\|_{L^2(\Sigma)} \|u_1\|_{H^1(\Sigma)}. 
\end{align*}

The computations above suggest that we need to estimate $ \|f\|_{L^2(\Sigma)}$ and $\|u_1\|_{H^1(\Sigma)}$. To this end, we utilize the trace theorem together with \eqref{eq:H_1_norm_in_Sigma} to obtain
\begin{align*}
\|u_1\|_{H^1(\Sigma)}
&\le 
\|u_1\|_{L^2(0,T;H^2(M))}+ \|u_1\|_{H^1(0,T;H^1(M))}
\\
&\le 
C \int_{0}^T \|u_1(t,\cdot)\|_{H^2(M)}
+
\left(\|u_1(t,\cdot)\|_{H^1(M)}
+
\|\p_t u_1(t,\cdot)\|_{H^1(M)}\right) dt
\\
&\le 
C \int_{0}^T \|u_1(t,\cdot)\|_{H^2(M)}
+
\|\p_t u_1(t,\cdot)\|_{H^1(M)} dt.
\end{align*}
Since $u_1$ is the GO solution given by \eqref{eq:solution_v}, differentiating it each time will produce a factor of $h^{-1}$. Thus, we utilize the estimates \eqref{eq:est_remainder_H2} and \eqref{eq:est_time_der_remainder_H1} to see that
\begin{equation}
\label{eq:est_f}
\|u_1\|_{H^1(\Sigma)}
\le
Ch^{-2}\|\alpha_1\|_{\ast}.
\end{equation}
Furthermore, since $f=u_2|_\Sigma$, where $u_2$ is the GO solution given by \eqref{eq:solution_u2}, we argue similarly as above and apply the estimate \eqref{eq:est_r1} to conclude that
\begin{equation}
\label{eq:est_u1_boundary}
\begin{aligned}
\|f\|_{L^2(\Sigma)}
&\le
C\|u_2\|_{L^2(0,T;H^1(M))}
\\
&\le 
C\int_0^T \|u_2(t,\cdot)\|_{L^2(M)}
+
\|\nabla_g  u_2(t,\cdot)\|_{L^2(M)} dt
\\
&\le
Ch^{-1}\|\alpha_2\|_{\ast}.
\end{aligned}
\end{equation}
Therefore, \eqref{eq:est_f} and \eqref{eq:est_u1_boundary} yield the estimate
\begin{equation}
\label{eq:est_bdy_term}
\begin{aligned}
\left| h\int_{\Sigma} (\Lambda_{A_2,q_2}-\Lambda_{A_1,q_1})(f) \overline{u_1} dS_gdt \right|
\le 
Ch^{-2} \|\Lambda_{A_2,q_2}-\Lambda_{A_1,q_1}\|_{\mathcal{L}(L^2(\Sigma),H^{-1}(\Sigma))} \|\alpha_1\|_{\ast} \|\alpha_2\|_{\ast}.
\end{aligned}   
\end{equation}

Finally, the estimate \eqref{eq:int_est_amplitudes} follows immediately from the estimates \eqref{eq:est_LHS_term1}--\eqref{eq:est_aux_term} and \eqref{eq:est_bdy_term}. This completes the proof of Lemma \ref{lem:int_estimate}.
\end{proof}

For each $y \in \p M_1$ we define the respective fiber
\[
S_y^+M_1
:=
\{\theta \in S_yM_1: \n{\nu,\theta}_g\le 0\}
\]
of the bundle $\p_+SM_1$. We next establish an estimate for the geodesic ray transform of the co-vector field $A$, which is given in the following lemma. Its proof consists of applying Lemma \ref{lem:int_estimate} to the amplitude terms $\alpha$ and $\beta_A$, which are given by \eqref{eq:alpha_solution} and \eqref{eq:beta_A_solution}, respectively, expressed in geodesic polar coordinates.

\begin{lem}
\label{lem:DN_map_ray_transform}
There exists a constant $C>0$ such that the estimate
\begin{equation}
\label{eq:ray_transform_DN_Map}
\begin{aligned}
&\left|\int_{\p_+SM_1} \mathcal{I}_1(A)(y,\theta)\Psi_1(y,\theta) \Psi_2(y,\theta)dS_g(\theta) \right| 
\\
&\le 
C\left(h+h^{-2} \|\Lambda_{A_2,q_2}-\Lambda_{A_1,q_1}\|_{\mathcal{L}(L^2(\Sigma),H^{-1}(\Sigma))} \right)  \|\Psi_1\|_{H^2(S_y^+M_1)}\|\Psi_2\|_{H^2(S_y^+M_1)}
\end{aligned}
\end{equation}
holds for all functions $\Psi_i \in C^\infty(\p_+ SM_1) \cap H^2(SM_1)$ when $0 <h<1$. See Section \ref{sec:xray_transform} for the definitions of $\mathcal{I}_1$.
\end{lem}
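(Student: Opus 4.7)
The strategy is to insert carefully chosen GO solutions from Proposition~\ref{prop:GO_solutions_initial} into Lemma~\ref{lem:int_estimate} and evaluate the resulting integral in geodesic polar coordinates to uncover the geodesic ray transform of $A=A_1-A_2$.

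Fix $y\in\p M_1\setminus M$ and work in geodesic polar coordinates $(r,\theta)$ based at $y$. Choose $\phi\in C_0^\infty(\R)$ with $\supp\phi\subset(\e,2\e)$, $\|\phi\|_{L^2}=1$, and $\e>0$ small enough that $T>\diam(M_1)+3\e$ and the geodesics $\gamma_{y,\theta}(s)$ for $s\in[0,2\e]$ all remain outside $M$. For each pair $\Psi_1,\Psi_2\in C^\infty(\p_+SM_1)\cap H^2(SM_1)$, apply Proposition~\ref{prop:GO_solutions_initial} twice to construct a GO solution $u_2$ of $\mathcal{H}_{g,A_2,q_2}u_2=0$ with vanishing initial data, and a GO solution $u_1$ of $\mathcal{H}_{g,A_1,q_1}u_1=0$ with vanishing terminal data, both of the form $e^{i(\psi(x)-t)/h}\alpha_j\beta_{A_j}+r_j$, where $\psi(x)=d_g(x,y)$, $\alpha_j=\rho^{-1/4}\phi(t-r)\Psi_j(y,\theta)$ as in \eqref{eq:alpha_solution}, and $\beta_{A_j}$ as in \eqref{eq:beta_A_solution}. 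A routine Sobolev calculation using smoothness and positivity of $\rho$ on $M$ yields $\|\alpha_j\|_\ast\le C\|\Psi_j\|_{H^2(S_y^+M_1)}$.

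Next, Lemma~\ref{lem:int_estimate} bounds the left-hand side of \eqref{eq:int_est_amplitudes} by $C(h+h^{-2}\delta_\Lambda)\|\Psi_1\|_{H^2(S_y^+M_1)}\|\Psi_2\|_{H^2(S_y^+M_1)}$, with $\delta_\Lambda:=\|\Lambda_{A_2,q_2}-\Lambda_{A_1,q_1}\|_{\mathcal{L}(L^2(\Sigma),H^{-1}(\Sigma))}$. Expressing the left-hand side in polar coordinates, the volume element $dV_g=\sqrt{\rho}\,dr\,d\omega_y(\theta)$ cancels the $\rho^{-1/2}$ factor coming from $\overline{\alpha_1}\alpha_2$; by Gauss's lemma $\langle A,d\psi\rangle_g=\sigma_A(r,y,\theta)$; and $\beta_{A_2}\overline{\beta_{A_1}}=\exp(i\int_0^t\sigma_A(r-s,y,\theta)\,ds)$. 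The choice of $\e$ forces this exponent to be constant in $t$ on $\supp\phi(\cdot-r)$, with value $G(r):=\int_0^r\sigma_A(u,y,\theta)\,du$. The $t$-integration produces the factor $\|\phi\|_{L^2}^2=1$, and the remaining $r$-integration gives $\int_0^{\exit(y,\theta)}G'(r)e^{iG(r)}\,dr=-i(e^{i\mathcal{I}_1(A)(y,\theta)}-1)$. Integrating against $d\sigma(y)$ over $\p M_1$ thus yields the analogue of \eqref{eq:ray_transform_DN_Map} with $(-i)(e^{i\mathcal{I}_1(A)}-1)$ in place of $\mathcal{I}_1(A)$.

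The remaining step, and the main technical obstacle, is a linearization. The a priori bound $\|A\|_{L^\infty(M)}\le C(N)$, which follows from $A_j\in\mathcal{A}(\lceil n/2\rceil+1,N)$ and Sobolev embedding, yields $\|\mathcal{I}_1(A)\|_{L^\infty}\le C(M_1,N)$. Combined with the Taylor bound $|-i(e^{ix}-1)-x|\le x^2/2$, this permits writing the exponentiated pairing as $\int\mathcal{I}_1(A)\Psi_1\Psi_2\,dS_g+O(\|\mathcal{I}_1(A)\|_{L^2}^2\|\Psi_1\Psi_2\|_{L^\infty})$. The quadratic remainder is absorbed by invoking the normal-operator bound \eqref{eq:est_transform_1_form}, together with the smallness of $\delta_\Lambda$ that drives the overall Hölder stability argument; executing this absorption while preserving the clean $H^2(S_y^+M_1)$ dependence on the right-hand side of \eqref{eq:ray_transform_DN_Map} is the delicate part of the proof.
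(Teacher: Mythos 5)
Your proof is correct through the polar-coordinate computation: you arrive at the pairing of $(-i)\bigl(e^{i\mathcal{I}_1(A)}-1\bigr)$ against $\Psi_1\Psi_2$, and the bound $\|\alpha_j\|_\ast\le C\|\Psi_j\|_{H^2(S_y^+M_1)}$ is established the same way as in the paper. The gap is in the final linearization step, which is where the paper takes a substantially different route and where your proposal does not actually close.

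Your Taylor bound $|-i(e^{ix}-1)-x|\le x^2/2$ is only useful when $x=\mathcal{I}_1(A)(y,\theta)$ is pointwise small. The a priori bound you invoke gives only $\|\mathcal{I}_1(A)\|_{L^\infty}\le C(M_1,N)$, which can be large; for example, if $\mathcal{I}_1(A)(y,\theta)=2\pi$ somewhere, the exponentiated quantity vanishes there while the linear quantity does not, so no comparison of the form $|\mathcal{I}_1(A)|\lesssim|e^{i\mathcal{I}_1(A)}-1|$ can hold, and the quadratic error $\|\mathcal{I}_1(A)\|_{L^2}^2\|\Psi_1\Psi_2\|_{L^\infty}$ cannot be absorbed (indeed it is of the same size as the quantity you are trying to estimate). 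The "smallness of $\delta_\Lambda$" alone does not rescue this: you would have to first establish, \emph{pointwise}, that $\mathcal{I}_1(A)$ lands in a neighborhood of $0$ rather than near $2k\pi$ for some $k\neq 0$, and that step is precisely what is missing.

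The paper resolves exactly this obstruction in two stages, neither of which appears in your proposal. First, it applies the already-proved estimate on the exponentiated pairing with $\Psi_1$ replaced by a scaled mollifier (a bump function concentrating at $\theta_0$ in spherical coordinates) and $\Psi_2\equiv 1$, obtaining a \emph{pointwise} bound $|e^{i\mathcal{I}_1(A)(y,\theta)}-1|\le C\|\Lambda_{A_2,q_2}-\Lambda_{A_1,q_1}\|^{1/(3n+9)}$, which is $<1$ once the map difference is small. Second, from $|e^{i\mathcal{I}_1(A)}-1|<1$ it deduces $\mathcal{I}_1(A)\in[-\pi/2,\pi/2]$ pointwise by a topological argument: $\mathcal{I}_1(A)$ is continuous on $\p_+SM$ and vanishes at tangential directions, so if it attained a value $\ge 2\pi$ somewhere, the intermediate value theorem along a path in $S_y^+M$ would produce a point where $\mathcal{I}_1(A)=\pi$, contradicting $|e^{i\pi}-1|=2$. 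Only after confining $\mathcal{I}_1(A)$ to $[-\pi/2,\pi/2]$ is the linear comparison $|\mathcal{I}_1(A)|\le C|e^{i\mathcal{I}_1(A)}-1|$ valid and the estimate transferable. You should replace your Taylor/absorption step with this mollification-plus-continuity argument (or supply an equally rigorous way to rule out $\mathcal{I}_1(A)$ near $2k\pi$, $k\neq 0$, which your sketch does not do).
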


\begin{proof}
Let  $\Psi_1,\Psi_2 \in C^\infty(\p_+ SM_1) \cap H^2(SM_1)$ and choose $\phi \in C^\infty_0(\R)$ as in the construction of the amplitude $\alpha$ in the proof of Proposition \ref{prop:GO_solutions_initial}.
Following \eqref{eq:alpha_solution}, we obtain two amplitudes 
\begin{equation}
\label{eq:form_alpha_polar}
\tilde \alpha_1 (t,r,\theta)=\rho^{-\frac{1}{4}}\phi(t-r)\Psi_1(y, \theta), \quad \tilde \alpha_2 (t,r,\theta)= \rho^{-\frac{1}{4}}\phi(t-r)\Psi_2(y, \theta).
\end{equation}
Since the function $\phi(t-r)$ is smooth and compactly supported, we get the inequality
\[
\|\alpha_i(t,\cdot)\|_{H^2(M)}
\le
\|\Psi_i\|_{H^2(S_y^+M_1)},
\quad i=1,2.
\]
We also observe that the time derivative of $\tilde \alpha_1$ only acts on the function $\phi(t-r)$, which implies that $\p_t^k\tilde \alpha_i (t,r,\theta)= \rho^{-\frac{1}{4}} \phi^{(k)}(t-r)\Psi_i(y, \theta)$ for any integer $k\ge 0$. Consequently, we obtain 
\[
\|\p_t^k\tilde \alpha_i(t,\cdot)\|_{L^2(M)}
\le
\|\Psi_i\|_{L^2(S_y^+M_1)}
\le
\|\Psi_i\|_{H^2(S_y^+M_1)}.
\]
Combining the previous two inequalities, it follows from \eqref{eq:def_star_norm} that
\begin{equation}
\label{eq:alpha1_norm}
\|\alpha_i\|_\ast \le C\|\Psi_i\|_{H^2(S_y^+M_1)}, 
\quad i=1,2.
\end{equation}

Let us now change variables on the left-hand side of the estimate \eqref{eq:int_est_amplitudes} by setting  $x=\exp_y(r\theta)$, with $r>0$ and $\theta \in S_yM_1$. By the definition of $\tilde \sigma_A$ given by \eqref{eq:sigma_A_def}, we have 
\begin{align*}
&\int_Q \n{A,d\psi}_g (\overline{\alpha_1}\alpha_2)(t,x) (\beta_{A_2}\overline{\beta_{A_1}})(t,x)dV_gdt
\\
&= \int_0^T \int_{S_y^+M_1}\int_0^{\tau_{\mathrm{exit}}(y,\theta)} \tilde \sigma_A (r,y,\theta) \left(\overline{ \tilde \alpha_1} \tilde \alpha_2\right)(t,r,\theta) 
\left(\tilde \beta_{A_2}\overline{\tilde \beta_{A_1}}\right)(t,r,\theta)\rho^{\frac{1}{2}} drd\omega_y(\theta)dt.
\end{align*}
Substituting the amplitudes $\tilde{\alpha}_1$ and $\tilde{\alpha}_2$ given by \eqref{eq:form_alpha_polar} into the right-hand side of the equation above, we get
\begin{align*}
\begin{aligned}
&\int_Q \n{A,d\psi}_g (\overline{\alpha_1}\alpha_2)(t,x) (\beta_{A_2}\overline{\beta_{A_1}})(t,x)dV_gdt
\\
&= \int_0^T \int_{S_y^+M_1}\int_0^{\tau_{\mathrm{exit}}(y,\theta)} \tilde \sigma_A (r,y,\theta) \phi^2(t-r)  (\tilde \beta_{A_2}\overline{\tilde \beta_{A_1}})(t,r,\theta) \Psi_1(y,\theta)\Psi_2(y,\theta)drd\omega_y(\theta)dt
\\
&=\int_0^T \int_{S_y^+M_1}\int_\R   \tilde \sigma_A (t-\tau,y,\theta) \phi^2(\tau) \exp\left(i\int_0^t \tilde \sigma_A(s-\tau,y,\theta)ds\right) \Psi_1(y,\theta)\Psi_2(y,\theta)d\tau d\omega_y(\theta)dt
\\
&= 
-i\int_\R \phi^2(\tau) \int_{S_y^+M_1}  \left[ \int_0^T  \frac{d}{dt} \exp\left(i\int_0^t \tilde \sigma_A(s-\tau,y,\theta)ds\right)dt \right]
\Psi_1(y,\theta)\Psi_2(y,\theta)d\omega_y(\theta)d\tau 
\\
&=
-i\int_\R \phi^2(\tau) \int_{S_y^+M_1}   \left[\exp\left(i\int_0^T \tilde \sigma_A(s-\tau,y,\theta)ds\right)-1\right]\Psi_1(y,\theta)\Psi_2(y,\theta)d\omega_y(\theta) d\tau.
\end{aligned}
\end{align*}

We  recall that $\supp \phi \subset (\varepsilon,2\varepsilon)$ for some $\varepsilon>0$ such that  $T>\diam(M_1)+3\varepsilon$. Then we have, in particular, that $\supp \phi \subset (\varepsilon,T-\varepsilon)$, and we can thus assume without loss of generality that $\|\phi\|^2_{L^2(\R)}=1$. These properties and Fubini's theorem yield  
\begin{align*}
&\int_\R \phi^2(\tau) \int_{S_y^+M_1}   \left[\exp\left(i\int_0^T \tilde \sigma_A(s-\tau,y,\theta)ds\right)-1\right]\Psi_1(y,\theta)\Psi_2(y,\theta)d\omega_y(\theta) d\tau
\\
&= \int_\R \phi^2(\tau) \int_{S_y^+M_1}   \left[\exp\left(i \int_{-\tau}^{T-\tau} \tilde \sigma_A(s,y,\theta)ds\right)-1\right]\Psi_1(y,\theta)\Psi_2(y,\theta)d\omega_y(\theta) d\tau
\\
&=
\int_\R \phi^2(\tau) \int_{S_y^+M_1}   \left[\exp\left(i\int_0^{\exit (y, \theta)} \tilde \sigma_A(s,y,\theta)ds\right)-1\right]\Psi_1(y,\theta)\Psi_2(y,\theta)d\omega_y(\theta)d\tau
\\
&=
\left(\int_\R \phi^2(\tau) d\tau\right)
\left(\int_{S_y^+M_1}   \left[\exp\left(i\int_0^{\exit (y, \theta)} \tilde \sigma_A(s,y,\theta)ds\right)-1\right]\Psi_1(y,\theta)\Psi_2(y,\theta)d\omega_y(\theta)\right)
\\
&=
\int_{S_y^+M_1}   \left[\exp\left(i\int_0^{\exit (y, \theta)} \tilde \sigma_A(s,y,\theta)ds\right)-1\right]\Psi_1(y,\theta)\Psi_2(y,\theta)d\omega_y(\theta),
\end{align*}
where we have changed the variable $s-\tau \mapsto s$ in the first step.

Therefore, we obtain from the previous  computations and the definition of the operator $\mathcal{I}_1(A)$, which is given by \eqref{eq:def_ray_transform_1form}, that   
\begin{align*}
&\int_Q \n{A,d\psi}_g (\overline{\alpha_1}\alpha_2)(t,x) \left(\beta_{A_2}\overline{\beta_{A_1}}\right)(t,x)dV_gdt
\\
&=
-i\int_{S_y^+M_1} \left[\exp\left(i\mathcal{I}_1(A)\right)-1\right]\Psi_1(y,\theta)\Psi_2(y,\theta)d\omega_y(\theta).    
\end{align*}
By Lemma \ref{lem:int_estimate}, as well as the estimate \eqref{eq:alpha1_norm}, we get for $0<h<1$ that
\begin{equation}
\label{eq:est_ray_transform_exp}
\begin{aligned}
&\left|\int_{S_y^+M_1} \left[\exp\left(i\mathcal{I}_1(A)(y,\theta)\right)-1\right] \Psi_1(y,\theta) \Psi_2(y, \theta) d\omega_y(\theta)\right| 
\\
&
\le 
C \left(h+h^{-2} \|\Lambda_{A_2,q_2}-\Lambda_{A_1,q_1}\|_{\mathcal{L}(L^2(\Sigma),H^{-1}(\Sigma))} \right) \|\Psi_1(y,\cdot)\|_{H^2(S_y^+M_1)}\|\Psi_2(y,\cdot)\|_{H^2(S_y^+M_1)}.
\end{aligned}
\end{equation}

To conclude the proof, we note that integrating \eqref{eq:est_ray_transform_exp} with respect to $y \in \p M_1$ would yield \eqref{eq:ray_transform_DN_Map}, provided that
\begin{equation}
\label{eq:exponent_estimate}
|\mathcal{I}_1(A)(y,\theta)|
\le
C  |\exp\left(i\mathcal{I}_1(A)(y,\theta)\right)-1| \quad 
\text{ for all } (y,\theta) \in \p_+ SM_1.
\end{equation}
However, this inequality is false in general, as the expression on the right-hand side vanishes when $\mathcal{I}_1(A)(y,\theta)=2k\pi$, $k\in \mathbb{Z}$. Thus, in what follows we shall argue why this cannot happen if $\|\Lambda_{A_2,q_2}-\Lambda_{A_1,q_1}\|_{\mathcal{L}(L^2(\Sigma),H^{-1}(\Sigma))}$ is small.

Assume for the moment that  
\begin{equation}
\label{eq:exp_est_pointwise}
|\exp\left(i \mathcal{I}_1(A)(y,\theta)\right)-1|<1
\quad 
\text{ for all } (y,\theta) \in \p_{+}SM.
\end{equation}
It means that there exists $\delta' \in (0,\frac{\pi}{2})$ such that for all $ (y,\theta) \in \p_{+}SM$, there exists $k \in \mathbb{Z}$ such that
\[
\mathcal{I}_1(A)(y,\theta) \in (2k\pi-\delta',2k\pi +\delta').
\]
Suppose that there is $(y,\theta_0) \in \p_{+}SM$ such that 
$
\mathcal{I}_1(A)(y,\theta_0) \geq 2\pi.
$
Since $M$ is simple, the map $\mathcal{I}_1(A)$ is continuous in $\p_+SM$. We then choose any $\theta_1 \in S_{y}^+M$ tangential to $\p M$ and a path $\theta\colon [0,1] \to S_{y}^+M$ starting from $\theta_0$ and ending at $\theta_1$. Since $\mathcal{I}_1(A)(y,\theta_1)=0$, it follows from the continuity that the map
$
s \mapsto \mathcal{I}_1(A)(A)(y,\theta(s)) 
$
obtains all values between $0$ and $2\pi$. Hence, this function obtains the value $\pi$ at some $\tilde s \in (0,1)$, where we have that
\[
|\exp\left(i \mathcal{I}_1(A)(y,\theta(\tilde s))\right)-1|=2>1,
\]
which is a contradiction. Hence, it must follow that 
\begin{equation}
\label{eq:I_1(A)_point_est}
\mathcal{I}_1(A)(y,\theta) \in \left[-\frac{\pi}{2}, \frac{\pi}{2}\right]
\quad 
\text{ for all } (y,\theta) \in \p_{+}SM,
\end{equation}
whenever the inequality \eqref{eq:exp_est_pointwise} is true. Due to compactness, the estimate  \eqref{eq:exponent_estimate} follows from \eqref{eq:I_1(A)_point_est}. 
 
We now show that the estimate \eqref{eq:est_ray_transform_exp} implies the estimate \eqref{eq:exp_est_pointwise}. The proof follows from the ideas originally presented in \cite[Section 5]{Kian2020} for the dynamic Schr\"odinger equation with time-dependent coefficients. We shall modify the arguments accordingly and provide the proof for the sake of completeness.

Let $\eta\in C^\infty_0(\R^n)$ be defined by
\[
\eta(x)=
\left\{
\begin{array}{cc}
C \exp\left(\frac{1}{|x|^2-1}\right),     & \text{ if } |x|<1 
\\
0,     & \text{ if } |x|\geq 1
\end{array}
\right.
\]
where the constant $C$ is chosen such that $\|\eta\|_{L^1(\R^n)}=1$. For each $\delta >0$, we denote
\[
\eta_\delta(x)=\frac{1}{\delta^n}\eta\left( \frac{x}{\delta} \right).
\]

Let us recall that we have extended $A$ by zero to $M_1\setminus M$. Then it follows that the function $\exp(i\mathcal{I}_1(A)(y,\theta)) - 1$ is compactly supported in $S_y^+M_1$. This guarantees the existence of a finite open cover of $\p M_1$ so that for all points in an element of the cover, we can choose spherical coordinates $\theta:\mathbb{R}^{n-1}\to S_yM_1$ such that they induce coordinates on a neighborhood of $\supp(\exp(i\mathcal{I}_1(A)(y,\theta)) - 1)$. 

Fix $y\in \p M_1$ and $\theta_0 \in \p_+ S_yM_1$, and let $\alpha_0 = \alpha(\theta_0)\in \mathbb{R}^{n-1}$ be the preimage of $\theta_0$ in our spherical coordinate system. We recall that under a change of coordinates $\theta$, the Dirac delta distribution can be written as
\[ \delta(\theta(\alpha)) = \frac{\delta(\alpha)}{|J_\theta(\alpha)|}, 
\] 
where $|J_\theta(\alpha)|=\sin^{n-2}(\alpha_1)\sin^{n-3}(\alpha_2)\cdots \sin(\alpha_{n-2})$ is the Jacobian of the spherical coordinate transformation $\theta$. It allows us to express our spherical approximation of the Dirac distribution as 
\[ \Psi_\delta(y,\theta(\alpha)) = \frac{\eta_\delta(\alpha_0-\alpha)}{\sin^{n-2}(\alpha_1)\sin^{n-3}(\alpha_2)\cdots \sin(\alpha_{n-2})}, \quad \alpha=(\alpha_1,\ldots,\alpha_{n-1})\in \mathbb{R}^{n-1}.   \]

Similarly, we define a function $f:\mathbb{R}^{n-1}\to \mathbb{C}$ by $f(\alpha) = \exp(i\mathcal{I}_1(A)(y,\theta(\alpha))) - 1$ and write
\[ f\ast \Psi_\delta(\alpha_0) = \int_{\mathbb{R}^{n-1}} f(\alpha) \Psi_\delta(\alpha_0-\alpha) d\alpha , \] 
for our spherical coordinate mollification of $f$. By the estimate \eqref{eq:est_ray_transform_exp}, with $\Psi_1=\Psi_\delta$ and $\Psi_2\equiv 1$, we have that
\begin{equation*}
\left| \int_{\mathbb{R}^{n-1}} f(\alpha) \Psi_\delta(\alpha_0-\alpha) d\alpha \right| \leq C(h + h^{-2}\|\Lambda_{A_2,q_2}-\Lambda_{A_1,q_1}\|_{\mathcal{L}(L^2(\Sigma),H^{-1}(\Sigma))}) \delta^{-(n+2)}. 
\end{equation*}
Furthermore, after possibly shrinking the diameter of the charts in our spherical coordinate patches, we can assume that $\Psi_\delta$ and $\eta_\delta$ are comparable in absolute value, which implies 
\begin{equation}\label{eq:mollifier-est}
\left| \int_{\mathbb{R}^{n-1}} f(\alpha) \eta_\delta(\alpha_0-\alpha) d\alpha \right| \leq C'(h + h^{-2}\|\Lambda_{A_2,q_2}-\Lambda_{A_1,q_1}\|_{\mathcal{L}(L^2(\Sigma),H^{-1}(\Sigma))}) \delta^{-(n+2)}. 
\end{equation}
Additionally, as in \cite[Lemma 3]{Kian2020}, we have 
\[ 
\left|\int_{\mathbb{R}^{n-1}} f(\alpha)\eta_\delta(\alpha_0-\alpha)d\alpha - f(\alpha_0)\right| \leq C\delta \|f \|_{C^1(\mathbb{R}^{n-1})}   
\] 
as a result of the mean value inequality. Combining this inequality  with \eqref{eq:mollifier-est}, we conclude that
\[ 
|\exp(i\mathcal{I}_1(A)(y,\theta(\alpha_0))) - 1| =  |f(\alpha_0)| \leq C(h + h^{-2}\|\Lambda_{A_2,q_2}-\Lambda_{A_1,q_1}\|_{\mathcal{L}(L^2(\Sigma),H^{-1}(\Sigma))}) \delta^{-(n+2)}.  
\]
Now, setting $h=\delta^{n+3}$ and $\delta = \|\Lambda_{A_2,q_2}-\Lambda_{A_1,q_1}\|_{\mathcal{L}(L^2(\Sigma),H^{-1}(\Sigma))}^{\frac{1}{3n+9}}$, we conclude that
\[ 
\left|\exp(i\mathcal{I}_1(A)(y,\theta(\alpha))) - 1\right| \leq C' 
\|\Lambda_{A_2,q_2}-\Lambda_{A_1,q_1}\|_{\mathcal{L}(L^2(\Sigma),H^{-1}(\Sigma))}^{\frac{1}{3n+9}}  
\] 
for a constant $C'$ independent of $y\in \p M_1$. Thus, once $\|\Lambda_{A_2,q_2}-\Lambda_{A_1,q_1}\|_{\mathcal{L}(L^2(\Sigma),H^{-1}(\Sigma))}$ is sufficiently small, the right-hand side above will be less than 1.  This verifies the hypothesis of \eqref{eq:exp_est_pointwise}, and completes the proof of Lemma \ref{lem:DN_map_ray_transform}.
\end{proof}

\begin{proof}[Proof of the estimate \eqref{eq:est_As}]

We are now ready to complete the derivation of the estimate \eqref{eq:est_As}. We observe that the function $h \mapsto h+h^{-2}\|\Lambda_{A_2,q_2}-\Lambda_{A_1,q_1}\|$ attains its minimum value, which is a multiple of $\|\Lambda_{A_2,q_2}-\Lambda_{A_1,q_1}\|^{\frac{1}{3}}$, 
at the point  $h_0=\sqrt[3]{2\|\Lambda_{A_2,q_2}-\Lambda_{A_1,q_1}\|}$. Thus, let us first consider the case that $h_0<\delta$, where $\delta \ll 1$, which is equivalent to the condition 
$0\le \|\Lambda_{A_2,q_2}-\Lambda_{A_1,q_1}\|<\frac{\delta^3}{2}$.

By Lemma \ref{lem:DN_map_ray_transform}, we have the estimate
\begin{equation}
\label{eq:ray_transform_DN_Map_global}
\left|\int_{\p_+SM_1} \mathcal{I}_1(A)(y,\theta)\Psi_1(y,\theta) \Psi_2(y, \theta)  dS_g(\theta)\right| 
\le 
C\|\Lambda_{A_2,q_2}-\Lambda_{A_1,q_1}\|^{\frac{1}{3}} \|\Psi_1\|_{H^2(\p_+SM_1)}\|\Psi_2\|_{H^2(\p_+SM_1)}.
\end{equation}
Let us now set $\Psi_1(y,\theta)=\mathcal{I}_1(\mathcal{N}_1(A))(y,\theta)$, where $\mathcal{N}_1 =\mathcal{I}_1^\ast \mathcal{I}_1$, and $\Psi_2(y,\theta)=\mu(y,\theta)=-\n{\theta, \nu(y)}$, which does not depend on $A$ or $q$.  As noted in Section \ref{sec:xray_transform}, the operator $\mathcal{I}_1: H^2(M,T^\ast M)\to H^2(\p_+ SM)$ is bounded. Furthermore, since $\mathcal{I}_1$ preserves smoothness, the smoothness of the function $\Psi_1$ follows from the smoothness of $A$, see \cite[Theorem 2.1]{sharafutdinov2005regularity}. Thus, we apply the estimate \eqref{eq:est_normal_solenoidal} to obtain 
\[
\|\Psi_1\|_{H^2(\p_+SM_1)}
\le 
C\| \mathcal{N}_1(A)\|_{H^2(M_1, T^\ast M_1)} 
\le 
C \|A^s\|_{H^1(M, T^\ast M)}.
\]

Due to our choice of $\Psi_1$ and $\Psi_2$, the left-hand side of the estimate \eqref{eq:ray_transform_DN_Map_global} is indeed $\|\mathcal{N}_1(A)\|_{L^2(M_1)}^2$. Thus, we get from the previous inequality that
\[
\|\mathcal{N}_1(A)\|_{L^2(M_1)}^2
\le
C\|\Lambda_{A_2,q_2}-\Lambda_{A_1,q_1}\|^{\frac{1}{3}} \|A^s\|_{H^1(M, T^\ast M)}.
\]
Furthermore, by incorporating \cite[Theorem 3.3.2]{Sharafutdinov} and the assumption $A_1,A_2 \in \mathcal{A}(\lceil \frac{n}{2}\rceil+2,N)$, we have that
\[
\|A^s\|_{H^1(M, T^\ast M)}
\le
C\|A\|_{H^1(M, T^\ast M)}
\le
CN.
\]
Applying the interpolation theorem \cite[Theorem 7.22]{Grubb}  and the estimate \eqref{eq:est_normal_solenoidal}, we see that  
\begin{align*}
\|\mathcal{N}_1(A)\|_{H^1(M_1)}
&\le
C \|\mathcal{N}_1(A)\|_{L^2(M_1)} ^{1/2} \|\mathcal{N}_1(A)\|_{H^2(M_1, T^\ast M_1)}^{1/2}
\\
&\le 
C \|\mathcal{N}_1(A)\|_{L^2(M_1)}^{1/2}
\|A^s\|_{H^{1}(M, T^\ast M)}^{1/2}
\\
&\le
C\|\Lambda_{A_2,q_2}-\Lambda_{A_1,q_1}\|^{\frac{1}{12}}.
\end{align*}
By utilizing the estimate \eqref{eq:est_transform_1_form}, we finally arrive at the estimate \eqref{eq:est_As}.

If $\|\Lambda_{A_2,q_2}-\Lambda_{A_1,q_1}\| \ge \frac{\delta^3}{2}$, we set $\sigma_1=\frac{1}{12}$ and  continuous inclusion $L^\infty(M) \hookrightarrow L^2(M)$ to obtain
\[
\|A^s\|_{L^2(M)}
\le
C\|A^s\|_{L^\infty(M)}
\le 
CN
\le
\frac{C}{\delta^{3\sigma_1}} \delta^{3\sigma_1}
\le
C\|\Lambda_{A_2,q_2}-\Lambda_{A_1,q_1}\|^{\sigma_1}.
\]
This completes the proof of estimate \eqref{eq:est_As}.
\end{proof}

\subsection{Stability estimate for the electric potential}
\label{subsec:proof_electric_hyperbolic}
In this subsection we utilize the stability estimate for $A^s$ to derive a H\"older-type estimate for the electric potential, namely,
\begin{equation}
\label{eq:est_electric}
\|q_1-q_2\|_{L^2(M)}
\le
C\|\Lambda_{A_2,q_2}-\Lambda_{A_1,q_1}\|^{\sigma_2},
\end{equation}
where the constant $C$ depends on $(M,g), N$, $n$, and $T$, and $\sigma_2 \in (0,1)$.
We first consider the case that $0<\|\Lambda_{A_2,q_2}-\Lambda_{A_1,q_1}\|<1$.

Let us begin the proof with the derivation of an estimate for $\|A^s\|_{L^\infty(M)}$. To this end,  the Sobolev embedding theorem, in conjunction with the interpolation inequality \cite[Theorem 7.22]{Grubb},  implies that 
\begin{align*}
\|A^s\|_{L^\infty(M)} 
&\le 
C\|A^s\|_{H^{\frac{n}{2}+\eta} (M)}
\\
&\le
C\|A^s\|_{L^2(M)}^{\kappa} \|A^s\|_{H^{k'}(M)}^{1-\kappa}.
\end{align*}
Here the constants $\eta \in (0,1)$, $\frac{n}{2}+\eta<k'<\lceil \frac{n}{2}\rceil+1$, and $\kappa = 1- \frac{\frac{n}{2}+\eta}{k'}\in (0,1)$.
Since $A\in \mathcal{A}(\lceil \frac{n}{2}\rceil+1, N)$, by \cite[Theorem 3.3.2]{Sharafutdinov}, we have that
$\|A^s\|_{H^{k'}(M)} \le \|A\|_{H^{k'}(M)} \le N$. Therefore, we get that
\[
\|A^s\|_{L^\infty(M)} 
\le 
C\|A^s\|_{L^2(M)}^{\kappa}.
\]
Hence, we apply the estimate \eqref{eq:est_As} to conclude that
\begin{equation}
\label{eq:est_As_Linfty}
\|A^s\|_{L^\infty(M)}
\le
C \|\Lambda_{A_2,q_2}-\Lambda_{A_1,q_1}\|^{\zeta},
\end{equation}
where $\zeta =\frac{\kappa }{12}$.
 
Thanks to \cite[Theorem 3.3.2]{Sharafutdinov}, let us write  $A=A^s+d\varphi$, where the solenoidal part $A^s$ satisfies the property $d^\ast A^s=0$, and the function $\varphi\in H^{k+1}(M)$ vanishes on $\p M$. We also define one-forms $A_1'$ and $A_2'$ by setting $A_1'=A_1-\frac{1}{2}d\varphi$ and $A_2'=A_2+\frac{1}{2}d\varphi$. It is clear that $A_1'-A_2'=A^s$. Moreover, due to the gauge invariance of the hyperbolic Dirichlet-to-Neumann map, we have $\Lambda_{A_j,q_j}=\Lambda_{A_j',q_j}$, $j=1,2$, see for instance \cite{Bellassoued_Aicha}.

By Proposition  \ref{prop:GO_solutions_initial}, there exists a GO solution $u_1\in  C^2(0,T;L^2(M)) \cap C(0,T;H^2(M))$ to the equation $\mathcal{H}_{g,A_1',q_1} u_1=0$ 
in $Q$ given by
\begin{equation}
\label{eq:solution_u1_q}
u_1(t,x)=e^{\frac{i(\psi(x)-t)}{h}}\alpha_1(t,x)\beta_{A_1'}(t,x)+r_1(t,x),
\end{equation}
as well as a GO solution $u_2\in  C^2(0,T;L^2(M)) \cap C(0,T;H^2(M))$ to the equation $\mathcal{H}_{g,A_2',q_2}u_2=0$ in $Q$, which is of the form
\begin{equation}
\label{eq:solution_u2_q}
u_2(t,x)=e^{\frac{i(\psi(x)-t)}{h}}\alpha_2(t,x)\beta_{A_2'}(t,x)+r_2(t,x).
\end{equation}
Here the remainders $r_1$ and $r_2$ satisfy the estimate \eqref{eq:est_r1}. Let us again denote $f=u_2|_{\Sigma}$.

We next substitute the GO solutions above into the integral identity \eqref{eq:int_id} and analyze the resulted expression when $0<h<1$. Different from  Subsection \ref{subsec:proof_solenoidal_part_A}, we shall isolate the term involving the electrical potential $q$ on the left-hand side. 

Our first main result in this subsection is as follows.
\begin{lem}
\label{lem:est_int_id_q}
There exists a constant $C>0$ such that for any amplitudes $\alpha_j$, $j=1,2$, which satisfy the transport equation \eqref{eq:transport_eq_alpha}, the estimate
\begin{equation}
\label{eq:int_est_amplitudes_q}
\left|\int_Q q (\overline{\alpha_1}\alpha_2) dV_gdt\right| 
\le 
C\left(h+h^{-3} \|\Lambda_{A_2,q_2}-\Lambda_{A_1,q_1}\|^\zeta \right) \|\alpha_1\|_{\ast} \|\alpha_2\|_{\ast}
\end{equation}
holds for all $0<h<1$. Here $\zeta \in(0,1)$ is the same as in the estimate \eqref{eq:est_As_Linfty}.
\end{lem}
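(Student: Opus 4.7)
The proof will closely parallel that of Lemma \ref{lem:int_estimate}, but with two essential modifications. First, I will work with the gauged one-forms $A_1' = A_1 - \frac{1}{2}d\varphi$ and $A_2' = A_2 + \frac{1}{2}d\varphi$, where $\varphi$ is the potential part of $A = A_1 - A_2$, so that $A_1' - A_2' = A^s$ with $d^\ast A^s = 0$ and $\Lambda_{A_j',q_j} = \Lambda_{A_j,q_j}$ by gauge invariance. Second, rather than extracting the leading term involving $A$ from the integral identity, I will rearrange \eqref{eq:int_id} (with $A_j'$ in place of $A_j$) to isolate the $q$-term on the left:
\[
\int_Q q\, \overline{u_1} u_2\, dV_g dt = \int_\Sigma \overline{u_1}(\Lambda_{A_2,q_2} - \Lambda_{A_1,q_1})(f)\, dS_g dt + \int_Q 2i\n{A^s, du_2}_g \overline{u_1}\, dV_g dt - \int_Q \n{A^s, A_1' + A_2'}_g \overline{u_1} u_2\, dV_g dt,
\]
where I have used $d^\ast A^s = 0$ and $|A_1'|_g^2 - |A_2'|_g^2 = \n{A^s, A_1' + A_2'}_g$.

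Next, I will substitute the GO solutions \eqref{eq:solution_u1_q} and \eqref{eq:solution_u2_q} into the above identity. Because the exponential phases cancel when multiplying $\overline{u_1} u_2$, the leading contribution to the left-hand side is
\[
\int_Q q(\overline{\alpha_1}\alpha_2)(\overline{\beta_{A_1'}}\beta_{A_2'})\, dV_g dt,
\]
plus remainder terms involving $r_1, r_2$ that are handled exactly as in \eqref{eq:est_LHS_term1}--\eqref{eq:est_aux_term} via the bound \eqref{eq:est_r1}. I then write $\overline{\beta_{A_1'}}\beta_{A_2'} = 1 + (\overline{\beta_{A_1'}}\beta_{A_2'} - 1)$. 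From the explicit formula \eqref{eq:beta_A_solution} for the amplitudes $\beta_{A_j'}$, a direct computation gives
\[
\overline{\beta_{A_1'}}\beta_{A_2'} = \exp\LC -i \int_0^t \n{A^s, d\psi}_g \circ \Phi_{s-r}\, ds \RC,
\]
so by the mean value inequality $|\overline{\beta_{A_1'}}\beta_{A_2'} - 1| \le CT\|A^s\|_{L^\infty(M)}$, and the correction $\int_Q q(\overline{\alpha_1}\alpha_2)(\overline{\beta_{A_1'}}\beta_{A_2'} - 1)\, dV_g dt$ is bounded by $C\|A^s\|_{L^\infty}\|\alpha_1\|_\ast \|\alpha_2\|_\ast$.

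The remaining task is to estimate the right-hand side. The boundary term is handled as in the derivation of \eqref{eq:est_bdy_term}, but without the factor of $h$ used there; using \eqref{eq:est_f} and \eqref{eq:est_u1_boundary} yields a contribution of order $h^{-3}\|\Lambda_{A_2,q_2} - \Lambda_{A_1,q_1}\|\|\alpha_1\|_\ast\|\alpha_2\|_\ast$. The term $\int_Q \n{A^s, du_2}_g \overline{u_1}\, dV_g dt$ produces a factor $h^{-1}$ from differentiating the phase $e^{i\psi/h}$ in $u_2$, and together with $\|A^s\|_{L^\infty(M)}$ yields a contribution of order $h^{-1}\|A^s\|_{L^\infty}\|\alpha_1\|_\ast\|\alpha_2\|_\ast$. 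The zeroth-order term $\int_Q \n{A^s, A_1' + A_2'}_g \overline{u_1} u_2\, dV_g dt$ is similarly bounded by $C\|A^s\|_{L^\infty}\|\alpha_1\|_\ast\|\alpha_2\|_\ast$ using the admissibility bound on $A_1', A_2'$.

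The critical step is to convert $\|A^s\|_{L^\infty(M)}$ into a power of $\|\Lambda_{A_2,q_2}-\Lambda_{A_1,q_1}\|$: this is precisely where the previously established estimate \eqref{eq:est_As_Linfty} enters, giving $\|A^s\|_{L^\infty(M)} \le C\|\Lambda_{A_2,q_2}-\Lambda_{A_1,q_1}\|^\zeta$. Since $0 < h < 1$ and $\|\Lambda\| < 1$ in the regime of interest, the terms $h^{-3}\|\Lambda\|$ and $h^{-1}\|\Lambda\|^\zeta$ are both dominated by $h^{-3}\|\Lambda\|^\zeta$, while all remainders from the GO construction contribute at worst $O(h)$. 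Combining these gives the claimed bound \eqref{eq:int_est_amplitudes_q}. The main obstacle is the bookkeeping around the amplitude product $\overline{\beta_{A_1'}}\beta_{A_2'}$: since $\beta_{A_j'}$ depends on $A_j'$ itself, one cannot simply bound this by $1$, and it is only the prior H\"older estimate on $A^s$ from Subsection \ref{subsec:proof_solenoidal_part_A} that allows the replacement of $\overline{\beta_{A_1'}}\beta_{A_2'}$ by $1$ with quantitatively small error.
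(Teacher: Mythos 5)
Your proposal follows essentially the same route as the paper's proof: you rearrange the integral identity to isolate the $q$-term, substitute the GO solutions with the gauge-adjusted one-forms $A_1'$ and $A_2'$, bound the leading error by $h^{-1}\|A^s\|_{L^\infty}$ and the boundary term by $h^{-3}\|\Lambda_{A_2,q_2}-\Lambda_{A_1,q_1}\|$, replace $\|A^s\|_{L^\infty}$ by $C\|\Lambda_{A_2,q_2}-\Lambda_{A_1,q_1}\|^{\zeta}$ via \eqref{eq:est_As_Linfty}, and finally discard the amplitude product $\overline{\beta_{A_1'}}\beta_{A_2'}$ by a Taylor-type argument using $|\overline{\beta_{A_1'}}\beta_{A_2'}-1| \le C\|A^s\|_{L^\infty}$. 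This is the paper's argument, including the last absorption $h^{-1}\|\Lambda\|^{\zeta}+h^{-3}\|\Lambda\| \le Ch^{-3}\|\Lambda\|^{\zeta}$ for $0<h<1$ and $\|\Lambda\|<1$.
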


\begin{proof}
By rearranging the terms and replacing $A$ with $A^s$  in \eqref{eq:int_id} , we have a new integral identity
\begin{equation}
\label{eq:int_id_q}
\begin{aligned}
\int_Q q \overline{u_1}u_2 dV_gdt
&= 
\int_Q 2i\n{A^s, du_2}_g \overline{u_1} dV_gdt
-
\int_Q (-id^\ast A^s+|A_1'|^2_g-|A_2'|^2_g) \overline{u_1} u_2 dV_gdt
\\
&+
\int_{\Sigma} (\Lambda_{A_2',q_2}-\Lambda_{A_1',q_1})(f) \overline{u_1} dS_gdt
\end{aligned}
\end{equation}
for any GO solutions $u_1$ and $u_2$ given by 
\eqref{eq:solution_u1_q} and \eqref{eq:solution_u2_q}.  
We next substitute these solutions into the left-hand side of \eqref{eq:int_id_q}. Using similar computations as in \eqref{eq:computation_du2u1}, as well as the fact that $d^\ast A^s=0$, 
we obtain  
\begin{equation}
\label{eq:int_id_computation_q}
\begin{aligned}
&\int_Q q (\overline{\alpha_1}\alpha_2) (\beta_{A_2'}\overline{\beta_{A_1'}}) dV_gdt
\\
&= 
\int_Q 2h^{-1}\n{A^s,d\psi}_g(\overline{\alpha_1}\alpha_2)(\beta_{A_2'}\overline{\beta_{A_1'}}) dV_gdt 
+
\int_Q  2 h^{-1}\overline{r}_1 e^{\frac{i(\psi(x)-t)}{h}} \n{A^s,d\psi}_g (\alpha_2\beta_{A_2'}) dV_gdt
\\
&-
\int 2i(\overline{\alpha_1\beta_{A_1'}})\n{A^s,d(\alpha_2\beta_{A_2'})}_g dV_gdt
-
\int_Q 2i\overline{r}_1\n{A^s,d(\alpha_2\beta_{A_2'})}_g dV_gdt 
\\
&+
\int_Q 2ie^{\frac{i(\psi(x)-t)}{h}} \n{A^s,dr_2}_g(\overline{\alpha_1\beta_{A_1'}}) dV_gdt
+
\int_Q 2i\overline{r}_1\n{A^s, dr_2}_g dV_gdt
\\
&+
\int_Q (|A_1'|^2_g-|A_2'|^2_g) \left(\overline{\alpha_1 \beta_{A_1'}} +\overline{r_1}\right)
\left(\alpha_2 \beta_{A_2'}+r_2\right) dV_gdt
\\
&-
\int_Q q \left(\overline{\alpha_1 \beta_{A_1'}} r_2+\alpha_2 \beta_{A_2'} \overline{r}_1 + \overline{r}_1 r_2 \right) dV_gdt
+
\int_{\Sigma} (\Lambda_{A_2',q_2}-\Lambda_{A_1',q_1})(f) \overline{u_1} dS_gdt.
\end{aligned}
\end{equation}

Let us next analyze each term on the right-hand side of \eqref{eq:int_id_computation_q}. To this end, we apply the Cauchy-Schwarz inequality, along with the fact that $|\beta_{A'_i}|=1$ (see \eqref{eq:beta_A_solution}), to obtain
\begin{equation}
\label{eq:est_term1_q}
\begin{aligned}
\left|\int_Q 2h^{-1}\n{A^s,d\psi}_g(\overline{\alpha_1}\alpha_2)(\beta_{A_2'}\overline{\beta_{A_1'}}) dV_gdt  \right|
&\le
Ch^{-1}\|A^s\|_{L^\infty(M)}\int_0^T \|\alpha_1(t,\cdot)\|_{L^2(M)} \|\alpha_2(t,\cdot)\|_{L^2(M)} dt
\\
&\le
Ch^{-1} \|A^s\|_{L^\infty(M)} \|\alpha_1\|_\ast \|\alpha_2\|_\ast.
\end{aligned}
\end{equation}
Proceeding as in the estimates \eqref{eq:est_LHS_term1}--\eqref{eq:est_LHS_term5}, we get the following upper bounds for the terms below:
\begin{equation}
\label{eq:est_term2_q}
\left|\int_Q  2 h^{-1}\overline{r}_1 e^{\frac{i(\psi(x)-t)}{h}} \n{A^s,d\psi}_g (\alpha_2\beta_{A_2'}) dV_gdt\right|
\le
C  \|A^s\|_{L^\infty(M)} \|\alpha_1\|_\ast \|\alpha_2\|_\ast,
\end{equation}
\begin{equation}
\label{eq:est_term3_q}
\left|\int_Q 2i \left(\overline{\alpha_1\beta_{A_1'}}\right)
\n{A^s,d(\alpha_2\beta_{A_2'})}_g dV_gdt\right|
\le
C \|A^s\|_{L^\infty(M)} \|\alpha_1\|_\ast \|\alpha_2\|_\ast,
\end{equation}
\begin{equation}
\label{eq:est_term4_q}
\left|\int_Q 2i \overline{r}_1\n{A^s,d(\alpha_2\beta_{A_2'})}_g dV_gdt  \right|
\le
Ch \|A^s\|_{L^\infty(M)} \|\alpha_1\|_\ast \|\alpha_2\|_\ast,
\end{equation}
\begin{equation}
\label{eq:est_term5_q}
\left| \int_Q 2ie^{\frac{i(\psi(x)-t)}{h}} \n{A^s,dr_2}_g(\overline{\alpha_1\beta_{A_1'}}) dV_gdt \right|
\le
C  \|A^s\|_{L^\infty(M)} \|\alpha_1\|_\ast \|\alpha_2\|_\ast,
\end{equation}
and
\begin{equation}
\label{eq:est_term6_q}
\left| \int_Q 2i\overline{r}_1\n{A^s, dr_2}_g dV_gdt \right|
\le
Ch  \|A^s\|_{L^\infty(M)} \|\alpha_1\|_\ast \|\alpha_2\|_\ast.
\end{equation}

Turning attention to the integral involving $|A_1'|^2_g-|A_2'|^2_g$,  we observe that
\[
|A_1'|^2_g-|A_2'|^2_g = \n{A_1'+A_2', A_1'-A_2'}_g
=
\n{A_1+A_2, A^s}_g.
\]
Thus, applying the same reasoning as  in the estimate \eqref{eq:est_aux_term}, we have that
\begin{equation}
\label{eq:est_term7_q}
\begin{aligned}
\left| \int_Q (|A_1'|^2_g-|A_2'|^2_g) \left(\overline{\alpha_1 \beta_{A_1'}} +\overline{r_1}\right)
\left(\alpha_2 \beta_{A_2'}+r_2\right) dV_gdt \right|
&\le 
C \|A^s\|_{L^\infty(M)} \|\alpha_1\|_{\ast} \|\alpha_2\|_{\ast}.
\end{aligned}
\end{equation}

To analyze the integral involving $q$, by utilizing the estimate \eqref{eq:est_r1}, along with the Cauchy-Schwarz inequality, we obtain
\begin{equation}
\label{eq:est_term8_q}
\begin{aligned}
& \left| \int_Q q \left(\overline{\alpha_1 \beta_{A_1'}} r_2+\alpha_2 \beta_{A_2'} \overline{r}_1 + \overline{r}_1 r_2 \right)dV_gdt\right|
\\
&\le
C\int_0^T \left( 
\|\alpha_1(t,\cdot)\|_{L^2(M)} \|r_2(t,\cdot)\|_{L^2(M)}
+
\|r_1(t,\cdot)\|_{L^2(M)} \|\alpha_2(t,\cdot)\|_{L^2(M)}\right.
\\
&\left. \quad \quad \quad \quad 
+
\|r_1(t,\cdot)\|_{L^2(M)} \|r_2(t,\cdot)\|_{L^2(M)}\right)dt
\\
&\le
Ch\|\alpha_1\|_\ast \|\alpha_2\|_\ast.
\end{aligned}
\end{equation}

Finally, to handle the boundary term, we adopt the same arguments between the estimates \eqref{eq:est_aux_term} and \eqref{eq:est_bdy_term}, as well as  the fact that $\Lambda_{A_j',q_j}=\Lambda_{A_j,q_j}$, $j=1,2$, to see that
\begin{equation}
\label{eq:est_term9_q}
\left|\int_{\Sigma} (\Lambda_{A_2',q_2}-\Lambda_{A_1',q_1})(f) \overline{u_1} dS_gdt\right|
\le
h^{-3} \|\Lambda_{A_2,q_2}-\Lambda_{A_1,q_1}\| \|\alpha_1\|_{\ast} \|\alpha_2\|_{\ast}.
\end{equation}
Therefore, by combining the estimates \eqref{eq:est_As_Linfty}  and \eqref{eq:est_term1_q}--\eqref{eq:est_term9_q}, we conclude that for $0<h<1$ we have
\begin{equation}
\label{eq:est_combine_q}
\begin{aligned}
\left|\int_Q q  (\overline{\alpha_1}\alpha_2)  \left(\beta_{A_2'}\overline{\beta_{A_1'}}\right) dV_gdt\right| 
&\le 
C\left(h+h^{-1}\|A^s\|_{L^\infty(M)}+h^{-3} \|\Lambda_{A_2,q_2}-\Lambda_{A_1,q_1}\| \right) \|\alpha_1\|_{\ast} \|\alpha_2\|_{\ast}
\\
&\le
C\left(h+h^{-3} \|\Lambda_{A_2,q_2}-\Lambda_{A_1,q_1}\|^\zeta \right) \|\alpha_1\|_{\ast} \|\alpha_2\|_{\ast}.
\end{aligned}
\end{equation}
where $\zeta\in (0,1)$ is the same as in \eqref{eq:est_As_Linfty}.

We next show that \eqref{eq:est_combine_q} still holds if  $ \beta_{A_2'}\overline{\beta_{A_1'}} $ is removed from the integral. To this end, let us write
\[
\int_Q q\overline{\alpha_1}\alpha_2 dV_gdt 
= 
\int_Q q\overline{\alpha_1}\alpha_2 \left(1-\beta_{A_2'}\overline{\beta_{A_1'}}\right) dV_gdt
+ 
\int_Q q\overline{\alpha_1}\alpha_2 \beta_{A_2'}\overline{\beta_{A_1'}} dV_g dt
\]
and observe from \eqref{eq:beta_A_solution} that
$\beta_{A_2'}\overline{\beta_{A_1'}} = \beta_{A^s}$.

Let $a\in \R$. By Taylor's expansion of the function $s \mapsto e^{ias}$ at $0$, we have that
\[
e^{ias}=1+ias-\frac{1}{2}a^2s^2\int_{0}^1e^{ia\rho}(1-s)^2d\rho.
\]
Hence, after choosing $s=1$ and
$
a=\int_{0 }^{t} \tilde \sigma_{A^s}(r-s, y,\theta)ds,
$
where $t\in (0,T), \; r\in(0,\exit(y,\theta))$, and $(y,\theta) \in \p_+SM_1,$
we get from \eqref{eq:beta_A_solution} and the estimate
\[
\left|\int_{0 }^{t} \tilde \sigma_{A^s}(r-s, y,\theta)ds\right|
\leq 
T\|A^s\|_{L^\infty(M,T^\ast M)}
\leq T\|A\|_{H^{\lceil \frac{n}{2}\rceil+1}(M,T^\ast M)}\le TN,
\]
that 
\[
\|1-\beta_{A^s}\|_{L^\infty((0,T) \times \p_+SM_1)} \le C\|A^s\|_{L^\infty(M,T^\ast M)},
\]
where $C$ only depends on $T,$ $(M,g)$, and $N$.
Therefore, by the Cauchy-Schwarz inequality and the estimate \eqref{eq:est_As_Linfty}, we have that
\begin{equation}
\label{eq:est_difference_amplitude}
\begin{aligned}
\left|\int_Q q \overline{\alpha_1}\alpha_2  \left(1-\beta_{A_2'}\overline{\beta_{A_1'}}\right) dV_gdt\right|
&\le
C\|1-\beta_{A^s}\|_{L^\infty((0,T) \times \p_+SM_1)}
\int_0^T
\|\alpha_1(t,\cdot)\|_{L^2(M)} \|\alpha_2(t,\cdot)\|_{L^2(M)} dt
\\
&\le
C\|\Lambda_{A_2,q_2}-\Lambda_{A_1,q_1}\|^\zeta  \|\alpha_1\|_{\ast} \|\alpha_2\|_{\ast}.
\end{aligned}
\end{equation}

Finally, the estimate \eqref{eq:int_est_amplitudes_q} follows immediately by combining \eqref{eq:est_combine_q} and \eqref{eq:est_difference_amplitude}. This completes the proof of Lemma \ref{lem:est_int_id_q}.
\end{proof}

We next substitute the amplitudes $\alpha_j$, $j=1,2$, given by \eqref{eq:alpha_solution} in geodesic polar coordinates, into the left-hand side of the estimate \eqref{eq:int_est_amplitudes_q}. This yields the following estimate for the geodesic ray transform of the electric potential $q$.

\begin{lem}
\label{lem:DN_map_ray_transform_q}
Let $y\in \p M_1$. Then for any function $\Psi_i \in H^2(\p_+SM_1) \cap C^\infty(\p_+SM_1)$, $i=1,2$,
there exists a constant $C>0$ such that the estimate 
\begin{equation}
\label{eq:ray_transform_DN_Map_q}
\begin{aligned}
&\left|\int_{\p_+SM_1}  \mathcal{I}_0(q)(y,\theta)\Psi_1(y,\theta) \Psi_2 (y,\theta) d\omega_y(\theta)\right| 
\\
&\le 
C\left(h+h^{-3} \|\Lambda_{A_2,q_2}-\Lambda_{A_1,q_1}\|^\zeta\right)
\|\Psi_1\|_{H^2(\p_+ SM_1)}
\|\Psi_2\|_{H^2(\p_+ SM_1)}    
\end{aligned}
\end{equation}
holds for $0<h<1$. Here
the constant $\zeta \in (0,1)$ is the same as in the estimate \eqref{eq:est_As_Linfty}. 
\end{lem}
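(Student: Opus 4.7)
The strategy closely mirrors the proof of Lemma \ref{lem:DN_map_ray_transform}, but it is notably simpler because the amplitude $\beta_A$ is not needed to isolate $q$ (the potential $q$ already appears linearly in the integral identity \eqref{eq:int_id_q}, in contrast to the exponential gauge factor that arose from the magnetic transport equation). I would fix $y\in \p M_1$, pass to geodesic polar coordinates centered at $y$, and choose the two amplitudes as in \eqref{eq:form_alpha_polar},
\[
\tilde\alpha_i(t,r,\theta)=\rho^{-1/4}(r,\theta)\,\phi(t-r)\,\Psi_i(y,\theta),\qquad i=1,2,
\]
where $\phi\in C_0^\infty(\R)$ is supported in $(\varepsilon,2\varepsilon)$ with $T>\mathrm{diam}(M_1)+3\varepsilon$, and normalized so that $\|\phi\|_{L^2(\R)}^2=1$. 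These satisfy the transport equation \eqref{eq:transport_eq_alpha}, and the bound $\|\alpha_i\|_\ast\le C\|\Psi_i\|_{H^2(S_y^+M_1)}\le C\|\Psi_i\|_{H^2(\p_+SM_1)}$ from \eqref{eq:alpha1_norm} is available.

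The key computation is then to substitute these amplitudes into the left-hand side of \eqref{eq:int_est_amplitudes_q}. In polar coordinates, the volume element is $\rho^{1/2}dr\,d\omega_y(\theta)$, while $\overline{\tilde\alpha_1}\tilde\alpha_2=\rho^{-1/2}\phi^2(t-r)\Psi_1(y,\theta)\Psi_2(y,\theta)$, so the factors of $\rho$ cancel. Fubini's theorem, together with the support condition on $\phi$ and the fact that $r\in[0,\mathrm{diam}(M_1)]$, allows one to evaluate $\int_0^T\phi^2(t-r)\,dt=\int_\R\phi^2(s)\,ds=1$ independently of $r$. The remaining inner integral $\int_0^{\tau_{\mathrm{exit}}(y,\theta)}\tilde q(r,y,\theta)\,dr$ is exactly $\mathcal I_0(q)(y,\theta)$ by \eqref{eq:def_ray_transform_functions}. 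This produces the clean identity
\[
\int_Q q\,\overline{\alpha_1}\alpha_2\, dV_g\,dt=\int_{S_y^+M_1} \mathcal I_0(q)(y,\theta)\,\Psi_1(y,\theta)\,\Psi_2(y,\theta)\,d\omega_y(\theta).
\]

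Combining this identity with the estimate \eqref{eq:int_est_amplitudes_q} of Lemma \ref{lem:est_int_id_q} and the amplitude bound from \eqref{eq:alpha1_norm} yields \eqref{eq:ray_transform_DN_Map_q}. I do not anticipate a genuine obstacle here; unlike in the proof of Lemma \ref{lem:DN_map_ray_transform}, there is no need to linearize $\exp(i\mathcal I_1(A))-1$ nor to invoke a mollification argument to handle the nonlinearity of the exponential, because $q$ enters the integrand linearly. The only point requiring mild care is the normalization $\|\phi\|_{L^2(\R)}=1$ and the choice of $\varepsilon$ so that the translated support $\{t-r:t\in(0,T),\,r\in[0,\mathrm{diam}(M_1)]\}$ contains $\mathrm{supp}\,\phi$, ensuring the time integral is independent of $r$.
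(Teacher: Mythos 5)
Your proposal is correct and follows essentially the same route as the paper: pass to geodesic polar coordinates at $y$, insert the amplitudes from \eqref{eq:form_alpha_polar} into the left-hand side of \eqref{eq:int_est_amplitudes_q}, use Fubini together with the support of $\phi$ and the normalization $\|\phi\|_{L^2(\R)}=1$ to reduce the time integral to $1$, recognize the remaining radial integral as $\mathcal{I}_0(q)(y,\theta)$, and then combine with Lemma \ref{lem:est_int_id_q} and the amplitude bound \eqref{eq:alpha1_norm}. The only step you leave implicit that the paper spells out is the final integration over $y\in\p M_1$ to pass from the fiberwise estimate on $S_y^+M_1$ to the bound over $\p_+SM_1$, but this is a minor omission and your observation that the exponential linearization/mollification machinery of Lemma \ref{lem:DN_map_ray_transform} is unnecessary here is exactly right.
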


\begin{proof}
After choosing $y \in \p M_1$ and setting $x=\exp_y(r\theta)$, where $r>0$ and $\theta \in S_yM_1$, the left-hand side of the estimate \eqref{eq:int_est_amplitudes_q} reads
\[
\int_Q q(x) (\overline{\alpha_1}\alpha_2)(t,x)  dV_gdt
=
\int_0^T \int_{S_y^+M_1} \int_0^{\tau_{\mathrm{exit}}(y,\theta)} \tilde q(r,y,\theta) (\overline{ \tilde \alpha_1} \tilde \alpha_2)(t,r,\theta) \rho^{\frac{1}{2}} drd\omega_y(\theta)dt.
\]
Similar to the proof of Lemma \ref{lem:DN_map_ray_transform}, we have two amplitudes $\tilde \alpha_1$ and $\tilde \alpha_2$ given by \eqref{eq:form_alpha_polar}, which satisfy the estimate \eqref{eq:alpha1_norm}.  By substituting them into the right-hand side of the equation above, we have that
\begin{align*}
\int_Q q(x) (\overline{\alpha_1}\alpha_2)(t,x) dV_gdt
&= \int_0^T \int_{S_y^+M_1}\int_0^{\tau_{\mathrm{exit}}(y,\theta)} \tilde q (r,y,\theta) \phi^2(t-r)  \Psi_1(y,\theta)\Psi_2(y,\theta)drd\omega_y(\theta)dt.
\end{align*}

Let us recall that
$\supp \phi \subset (\varepsilon, 2\varepsilon)$ for some constant $\varepsilon>0$ such that  $T>\diam(M_1)+3\varepsilon$.
Then it follows from Fubini's theorem that
\begin{align*}
&\int_0^T \int_{S_y^+M_1}\int_0^{\tau_{\mathrm{exit}}(y,\theta)} \tilde q (r,y,\theta) \phi^2(t-r)  \Psi_1(y,\theta)\Psi_2(y,\theta)drd\omega_y(\theta)dt 
\\
&=
\int_{S_y^+M_1}\int_0^{\tau_{\mathrm{exit}}(y,\theta)} \tilde q (r,y,\theta) \Psi_1(y,\theta)\Psi_2(y,\theta) \left( \int_0^T \phi^2(t-r) dt \right) drd\omega_y(\theta).
\end{align*}
We next perform a change of variables $t-r\mapsto t$ and utilize the supporting properties of $\phi$ to get that
\[
\int_0^T \phi^2(t-r) dt = \int_{-r}^{T-r} \phi^2(t) dt = \int_\R \phi^2(t)dt.
\]
Let us again assume without loss of generality that $\|\phi\|^2_{L^2(\R)}=1$. Then we conclude that
\begin{align*}
&\int_0^T \int_{S_y^+M_1}\int_0^{\tau_{\mathrm{exit}}(y,\theta)} \tilde q (r,y,\theta) \phi^2(t-r)  \Psi_1(y,\theta)\Psi_2(y,\theta)drd\omega_y(\theta)dt
\\	
&= \int_{S_y^+M_1}\int_0^{\tau_{\mathrm{exit}}(y,\theta)} \tilde q (r,y,\theta) \Psi_1(y,\theta)\Psi_2(y,\theta)drd\omega_y(\theta)
\\
&=
\int_{S_y^+M_1} \mathcal{I}_0(q)(y,\theta) \Psi_1(y,\theta) \Psi_2(y,\theta)d\omega_y(y,\theta).
\end{align*}
From here, we apply Lemma \ref{lem:est_int_id_q}, as well as the estimate \eqref{eq:alpha1_norm}, to get that 
\begin{align*}
&\int_{S_y^+M_1} \mathcal{I}_0(q)(y,\theta)\Psi_1(y,\theta) \Psi_2(y,\theta) d\omega_y(\theta)
\\
&\le
C\left(h+h^{-3} \|\Lambda_{A_2,q_2}-\Lambda_{A_1,q_1}\|^\zeta\right)
\|\Psi_1(y,\cdot)\|_{H^2(S_y^+M_1)}
\|\Psi_2(y,\cdot)\|_{H^2(S_y^+M_1)}.    
\end{align*}

Finally, we integrate both sides of the inequality above with respect to $y\in \p M_1$ to obtain the estimate \eqref{eq:ray_transform_DN_Map_q}. This completes the proof of Lemma \ref{lem:DN_map_ray_transform_q}.
\end{proof}

\begin{proof}[Proof of the estimate \eqref{eq:est_electric}]
We are now ready to complete the derivation of the estimate \eqref{eq:est_electric}. Let us first minimize the right-hand side of the estimate \eqref{eq:ray_transform_DN_Map_q} with respect to the variable $h$.
To this end, we note that the minimum value of  the function $h \mapsto h+h^{-3} \|\Lambda_{A_2,q_2}-\Lambda_{A_1,q_1}\|^\zeta$ is a multiple of $\|\Lambda_{A_2,q_2}-\Lambda_{A_1,q_1}\|^{\frac{\zeta}{4}}$, which is achieved when $h_0=\sqrt[4]{3\|\Lambda_{A_2,q_2}-\Lambda_{A_1,q_1}\|^\zeta}$. Hence, we first consider the case that $h_0<\delta$, where $\delta \ll 1$, which is equivalent to the condition 
$0\le \|\Lambda_{A_2,q_2}-\Lambda_{A_1,q_1}\| <
\left[\frac{(\delta')^4}{3}\right]^{\frac{1}{\zeta}}$. 

By Lemma \ref{lem:DN_map_ray_transform_q}, we get that
\begin{equation}
\label{eq:est_transform_q}
\left|\int_{\p_+SM_1} \mathcal{I}_0(q)(y,\theta)\Psi_1(y,\theta) \Psi_2(y,\theta) d\omega_y(\theta)\right| 
\le 
C\|\Lambda_{A_2,q_2}-\Lambda_{A_1,q_1}\|^{\frac{\zeta}{4}} \|\Psi_1\|_{H^2(\p_+ SM_1)}
\|\Psi_2\|_{H^2(\p_+ SM_1)}.
\end{equation}
We now choose $\Psi_1(y,\theta)=\mathcal{I}_0(\mathcal{N}_0(q))(y,\theta)$ and $\Psi_2(y,\theta)=\mu(y,\theta)=-\n{\theta, \nu(y)}$, where $\mathcal{N}_0 =\mathcal{I}_0^\ast \mathcal{I}_0$, and $\mathcal{I}_0$ is the geodesic ray transform of functions defined by the formula \eqref{eq:def_ray_transform_functions}. With this choice of $\Psi_1$ and $\Psi_2$, the left-hand side of \eqref{eq:est_transform_q} becomes $\|\mathcal{N}_0(q)\|_{L^2(M_1)}^2$.  Since $\mathcal{I}_0: H^2(M_1) \to H^2(\p_+SM_1)$ is a bounded operator, we apply the estimate \eqref{eq:est_normal_q} to see that
\[
\|\Psi\|_{H^2(\p_+SM_1)}
\le 
C\|\mathcal{N}_0(q)\|_{H^2(M_1)} 
\le 
C \|q\|_{H^1(M)}.
\]
Hence, the estimate \eqref{eq:est_transform_q} and the fact that $q\in \mathcal{Q}(N)$ yield 
\begin{align*}
\|\mathcal{N}_0(q)\|_{L^2(M_1)}^2
&\le
C\|\Lambda_{A_2,q_2}-\Lambda_{A_1,q_1}\|^{\frac{\zeta}{4}} \|q\|_{H^1(M)}
\\
&\le 
C\|\Lambda_{A_2,q_2}-\Lambda_{A_1,q_1}\|^{\frac{\zeta}{4}}.
\end{align*}

In order to apply the estimate \eqref{eq:est_transform_function}, we need to first derive an estimate for $\|\mathcal{N}_0(q)\|_{H^1(M_1)}$. To achieve this, by the interpolation inequality and  the estimate \eqref{eq:est_normal_q}, we deduce that
\begin{align*}
\|\mathcal{N}_0(q)\|_{H^1(M_1)}^2
&\le
C \|\mathcal{N}_0(q)\|_{L^2(M_1)}
\|\mathcal{N}_0(q)\|_{H^2(M_1)}
\\
&\le 
C \|\mathcal{N}_0(q)\|_{L^2(M_1)}
\|q\|_{H^1(M)}
\\
&\le
C\|\Lambda_{A_2,q_2}-\Lambda_{A_1,q_1}\|^{\frac{\zeta}{8}}.
\end{align*}
Therefore, by the estimate \eqref{eq:est_transform_function}, we get  
\begin{equation}
\label{eq:Holder_electric}
\|q\|_{L^2(M)}
\le 
C\|\Lambda_{A_2,q_2}-\Lambda_{A_1,q_1}\|^{\frac{\zeta}{16}}.
\end{equation}

When $\|\Lambda_{A_2,q_2}-\Lambda_{A_1,q_1}\| \ge \left[\frac{(\delta')^4}{3}\right]^{\frac{1}{\zeta}}$, due to the continuous inclusion $L^\infty(M) \hookrightarrow L^2(M)$, we have that
\[
\|q\|_{L^2(M)}
\le
C\|q\|_{L^\infty(M)}
\le 
CN
\le
\frac{CN}{(\delta')^{\sigma_2}} (\delta')^{\sigma_2}
\le
\frac{CN}{(\delta')^{\sigma_2}} \|\Lambda_{A_2,q_2}-\Lambda_{A_1,q_1}\|^{\sigma_2},
\]
where $\sigma_2=\frac{\zeta}{16}$. This completes the proof of the estimate \eqref{eq:est_electric}.
\end{proof}

Finally, we obtain the estimate \eqref{eq:est_X_and_q} by combining the estimates \eqref{eq:est_As} and \eqref{eq:est_electric}. The proof of Theorem \ref{thm:main_result_hyperbolic} is now complete.

\section{Proof of Theorem \ref{thm:spectral_problem}}
\label{sec:proof_spectral}

This section is devoted to the proof of Theorem \ref{thm:spectral_problem}. Towards this goal, in Subsection \ref{subsec:relation_bsp_DN_map} we first connect the boundary spectral data to the hyperbolic Dirichlet-to-Neumann map, which will be accomplished via a family of elliptic Dirichlet-to-Neumann maps defined below. Then we show in Subsection \ref{subsec:norm_estimates} that the norm of the hyperbolic Dirichlet-to-Neumann map is bounded above by the norm of the spectral data.

Let us  begin  by defining a family of elliptic Dirichlet-to-Neumann maps. 
Recall that $\sigma(\mathcal{E}_{g,A,q})= \{\lambda_{k,A,q}:\,k=1,2,\cdots\}$ is the spectrum of the magnetic Schr\"odinger operator \eqref{eq:def_mag_Schro},
while $\varphi_{k,A,q}$ is the (Dirichlet) eigenfunction corresponding to the eigenvalue $\lambda_{k,A,q}$. We also reserve the notation  $\psi_{k,A,q} = \LA d_A\varphi_{k,A,q},\nu \RA|_{\p M}$ for the Neumann boundary values of the eigenfunctions. 
For simplicity, when we only work with one set of coefficients $(A,q)$, we shall denote 
\[
\lambda_k:= \lambda_{k,A,q}, \quad \varphi_{k}: = \varphi_{k,A,q}, \quad \psi_k:=\psi_{k,A,q}.
\]

Let $\rho(\mathcal{E}_{g,A,q}) = \C \setminus \sigma(\mathcal{E}_{g,A,q})$ be the resolvent set of $\mathcal{E}_{g,A,q}$. 
Then for any complex number $z \in \rho(\mathcal{E}_{g,A,q})$ and any function $f \in H^{1/2}(\p M)$, the boundary value problem
\begin{equation}
\label{eq:bvp_resolvent}
\begin{cases}
(\mathcal{E}_{g,A,q}-z) u = 0 &\text{ in } M,
\\
u=f &\text{ on } \p M, 
\end{cases}
\end{equation}
admits a unique solution $u \in H^1(M)$, see \cite[Theorem 23.4]{Eskin2011}. We shall define the corresponding elliptic Dirichlet-to-Neumann map $\Pi_{A,q}(z): H^{\frac{1}{2}}(\p M) \to H^{-\frac{1}{2}}(\p M)$ by the formula
\begin{equation}
\label{eq:def_elliptic_DNmap}
\Pi_{A,q}(z)(f) = \LA d_Au,\nu \RA_g.
\end{equation}
Here $\LA d_Au,\nu \RA_g$ is defined in the weak sense:
\[
\LA \LA d_Au,\nu \RA_g,h \RA_{H^{-\frac{1}{2}}(\p M),H^{1/2}(\p M)}:  = (d_A u, \overline{d_A v} )_{L^2(M)} + ( (q-z)u,\overline{v} )_{L^2(M)},  \quad h\in H^{1/2}(\p M),
\]
where $v\in H^1(M)$ is an extension of $h$. Let us remark that this definition is independent of the choice of $v$ since $u$ solves \eqref{eq:bvp_resolvent}. For each  $-1<s<-\frac{1}{2}$, thanks to \cite[Chapter 4, Proposition 3.4]{Taylor_book}, we compactly
embed $H^{-\frac{1}{2}}(\p M)$ into $H^{s}(\p M)$, and view the elliptic Dirichlet-to-Neumann map as a bounded linear operator $\Pi_{A,q}: H^{1/2}(\p M)\to H^s(\p M)$. The operator norm is denoted by $\|\cdot\|_{\frac{1}{2},s}$.

\subsection{Connection between boundary spectral data and hyperbolic Dirichlet-to-Neumann map}
\label{subsec:relation_bsp_DN_map}

In this subsection we establish the relationship between the boundary spectral data and the hyperbolic Dirichlet-to-Neumann map through two key lemmas. We first show in Lemma \ref{lem:DN_map_decomposition} that the spectral data can be expressed in terms of the elliptic Dirichlet-to-Neumann map $\Pi_{A,q}(z)$ defined above. Afterwards, we derive an explicit formula that relates the elliptic and hyperbolic Dirichlet-to-Neumann maps in Lemma \ref{lem_DN_E_H}. 

\begin{rem}
\label{rem:positivity_of_q}
In the proof of Theorem \ref{thm:spectral_problem}, we need to incorporate an additional assumption that the operators $\mathcal{E}_{g,A_\ell,q_\ell}$ are positive definite for both $\ell\in \{1,2\}$. However, this does not conflict with the claim of the theorem, as the assumption $q_1,q_2 \in \mathcal{Q}(N)$ implies that $q_\ell+N \geq 0$, and the operators $\mathcal{E}_{g,A_\ell,q_\ell+N}$ are positive definite with eigenvalues $(\lambda_{\ell,k}+N)_{k\in \N}$ and eigenfunctions $(\varphi_{\ell,k})_{k\in \N}$. 
Thus, the difference of the spectral data of the operators $\mathcal{E}_{g,A_1,q_1}$ and $\mathcal{E}_{g,A_2,q_2}$ is equivalent to that of the shifted operators $\mathcal{E}_{g,A_1,q_1+N}$ and $\mathcal{E}_{g,A_2,q_2+N}$. Hence, throughout this section we may assume without loss of generality that $q_\ell$ is non-negative and that the operator $\mathcal{E}_{g,A_\ell,q_\ell}$ is positive definite.
\end{rem}

The following lemma extends \cite[Lemma 2.28]{Choulli_book}, which was given for the Schr\"odinger operator $-\Delta +q$, as we need the corresponding result for the magnetic Schr\"odinger operator $\mathcal{E}_{g,A,q}$. 

\begin{lem}
\label{lem:DN_map_decomposition}
Let $A\in W^{1,\infty}(M, T^\ast M)$, $q\in L^\infty(M, \R)$ be non-negative, and $z\in \rho(\mathcal{E}_{g,A,q})$. Then for any integer $m > \frac{n}{2}+1$,
we have that
\[
\LC\frac{d^m}{d z^m} \Pi_{A,q}(z)\RC f
= -
m! \sum_{k=1}^\infty \frac{(f,  \psi_k)_{L^2(\p M)}}{(\lambda_k-z)^{m+1}} \psi_k,
\]
where the map $z \mapsto \Pi_{A,q}(z)(f)$ is $H^{-\frac{1}{2}}(\p M)$-valued for each function $f\in H^{\frac{1}{2}}(\p M)$.
\end{lem}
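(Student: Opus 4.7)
The plan is to solve the Dirichlet problem \eqref{eq:bvp_resolvent} by Fourier expansion in the $L^{2}(M)$-eigenbasis of $\mathcal{E}_{g,A,q}$, isolate the $z$-dependent part via Green's identity, and then differentiate $m$ times in $z$, with Weyl's law supplying absolute convergence of the resulting series.

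First, I would fix any extension $u_{0}\in H^{1}(M)$ of $f$ and write the unique solution $u_{z}\in H^{1}(M)$ of \eqref{eq:bvp_resolvent} as $u_{z}=u_{0}+w_{z}$, where $w_{z}\in H_{0}^{1}(M)$ solves $(\mathcal{E}_{g,A,q}-z)w_{z}=-(\mathcal{E}_{g,A,q}-z)u_{0}$ in the weak sense. Since $\mathcal{E}_{g,A,q}$ is positive self-adjoint (see Remark \ref{rem:positivity_of_q}), $(\varphi_{k})_{k\in\N}$ is an $L^{2}(M)$-orthonormal basis; expanding $w_{z}=\sum_{k}c_{k}(z)\varphi_{k}$ and testing against $\varphi_{k}\in H_{0}^{1}(M)\cap H^{2}(M)$ yields
\[
c_{k}(z)=-\frac{((\mathcal{E}_{g,A,q}-z)u_{0},\varphi_{k})_{L^{2}(M)}}{\lambda_{k}-z}.
\]

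Next, I would invoke Green's identity \eqref{eq:Green_mag_Laplacian} with $u=u_{0}$ and $v=\varphi_{k}$, using $\varphi_{k}|_{\partial M}=0$ and $\lambda_{k}\in\mathbb{R}$, to compute
\[
((\mathcal{E}_{g,A,q}-z)u_{0},\varphi_{k})_{L^{2}(M)}=(\lambda_{k}-z)(u_{0},\varphi_{k})_{L^{2}(M)}+(f,\psi_{k})_{L^{2}(\partial M)}.
\]
Substituting gives
\[
c_{k}(z)=-(u_{0},\varphi_{k})_{L^{2}(M)}-\frac{(f,\psi_{k})_{L^{2}(\partial M)}}{\lambda_{k}-z},
\]
in which the first summand is independent of $z$. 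Since $\sum_{k}(u_{0},\varphi_{k})_{L^{2}(M)}\varphi_{k}=u_{0}$ in $L^{2}(M)$, this produces the $L^{2}(M)$-convergent representation
\[
u_{z}=-\sum_{k=1}^{\infty}\frac{(f,\psi_{k})_{L^{2}(\partial M)}}{\lambda_{k}-z}\varphi_{k}.
\]
Differentiating $m$ times via $\partial_{z}^{m}(\lambda_{k}-z)^{-1}=m!\,(\lambda_{k}-z)^{-(m+1)}$ and taking the conormal trace term-by-term (recalling $\langle d_{A}\varphi_{k},\nu\rangle_{g}|_{\partial M}=\psi_{k}$) would yield the claimed identity.

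The main obstacle is rigorously justifying the exchange of $\partial_{z}^{m}$ and the conormal trace with the infinite sum, since the raw series for $u_{z}$ only converges in $L^{2}(M)$. This is exactly where the hypothesis $m>\tfrac{n}{2}+1$ enters. Spectral calculus identifies $\partial_{z}^{m}u_{z}$ with $m!\,(\mathcal{E}_{g,A,q}-z)^{-m}u_{z}\in H_{0}^{1}(M)\cap H^{2m}(M)$ for $m\ge1$, and holomorphy of the resolvent on $\rho(\mathcal{E}_{g,A,q})$ legitimizes the term-by-term $z$-differentiation. Combining the bound $\|\psi_{k}\|_{H^{1/2}(\partial M)}\le C(1+|\lambda_{k}|)$ from \eqref{eq:est_normal_eigenfunction_bdy} with Weyl's asymptotics \eqref{eq_wely}, each term of the differentiated series lies in $H^{-1/2}(\partial M)$ with
\[
\left\|\frac{(f,\psi_{k})_{L^{2}(\partial M)}}{(\lambda_{k}-z)^{m+1}}\psi_{k}\right\|_{H^{-1/2}(\partial M)}\le C\|f\|_{H^{1/2}(\partial M)}\frac{(1+|\lambda_{k}|)^{2}}{|\lambda_{k}-z|^{m+1}}\lesssim k^{-2(m-1)/n},
\]
which is summable precisely when $2(m-1)/n>1$, i.e.\ when $m>\tfrac{n}{2}+1$. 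A parallel spectral-norm estimate, combined with elliptic regularity for the eigenfunctions, further ensures that the series for $\partial_{z}^{m}u_{z}$ converges in some $H^{s}(M)$ with $s>\tfrac{3}{2}$, so that the Neumann trace is continuous and commutes with the summation, completing the justification.
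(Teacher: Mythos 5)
Your proposal is correct, and the overall strategy matches the paper's: expand the solution of the resolvent boundary value problem in the Dirichlet eigenbasis, use Green's identity to convert bulk inner products into boundary pairings $(f,\psi_k)_{L^2(\partial M)}$, and invoke Weyl asymptotics together with the bound $\|\psi_k\|_{H^{1/2}(\partial M)}\le C(1+\lambda_k)$ to get summability under the condition $m>\tfrac{n}{2}+1$. The organization differs in two respects worth noting. First, you lift $f$ by an arbitrary extension $u_0\in H^1(M)$ and directly compute the Fourier coefficients of the corrector $w_z=u_z-u_0$, observing that the $z$-independent part cancels $u_0$ to yield the closed-form $L^2$ representation $u_z=-\sum_k\frac{(f,\psi_k)}{\lambda_k-z}\varphi_k$ in one stroke; the paper instead takes the specific $\Delta_{g,A}$-harmonic extension $F$, writes $u(z)=F-R_{A,q}(z)((q-z)F)$, and uses the iterated-resolvent identity $u^{(m)}=m!\,R_{A,q}(z)^m u(z)$ together with the explicit spectral series for $R_{A,q}(z)^{m+1}[(q-z)F]$. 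Your route is slightly more elementary and does not need the auxiliary harmonic lift, while the paper's choice of $F$ makes the Green's formula step collapse to a single line. Second, your closing paragraph explicitly checks that for $m>\tfrac{n}{2}+1$ the differentiated series converges in $H^s(M)$ for some $s\in(\tfrac{3}{2},2)$ (using $\|\varphi_k\|_{H^s(M)}\lesssim(1+\lambda_k)^{s/2}$ and Weyl), which is what legitimizes commuting the conormal trace with the infinite sum; the paper passes over this step and only verifies a posteriori that the boundary series converges in $H^{-1/2}(\partial M)$. Your extra observation tightens the argument at precisely the spot where the paper is brief. One small clean-up: the pairing $((\mathcal{E}_{g,A,q}-z)u_0,\varphi_k)_{L^2(M)}$ should be read as the $H^{-1}$--$H^1_0$ duality, since a generic $u_0\in H^1(M)$ only gives $\Delta_{g,A}u_0\in H^{-1}(M)$; the ensuing Green's-formula manipulation is valid in that weak sense since $\varphi_k\in H^1_0(M)\cap H^2(M)$.
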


\begin{proof}
We use the notation $R_{A,q}:  \rho(\mathcal{E}_{g,A,q}) \to \mathcal{L}(L^2(M), H^2(M))$ for the resolvent operator $R_{A,q}(z)=(\mathcal{E}_{g,A,q} -z)^{-1}$
of the boundary value problem
\[
\begin{cases}
(\mathcal{E}_{g,A,q}-z) v = h &\text{ in } M,
\\
v=0 &\text{ on } \p M, 
\end{cases}
\]
where $h\in L^2(M)$. By following the same arguments as in \cite[Proposition 2.30]{Choulli_book}, we have that
\begin{equation}
\label{eq:resolvent_formula}
v = R_{A,q}(z)h = \sum_{k=1}^\infty\frac{(h,\varphi_k)_{L^2(M)}}{\lambda_k-z}\varphi_k, 
\quad \text{ for all } h \in L^2(M).
\end{equation}

Let $f\in H^{1/2}(\p M)$, and let  $F\in H^{1}(M)$ be a weak solution of the boundary value problem
\begin{equation}
\label{eq:eq_F}
\begin{cases}
-\Delta_{g,A}F = 0 & \text{ in }  M,
\\
F = f & \text{ on }  \p M.
\end{cases}
\end{equation}
The solution $F$ exists since the magnetic Laplacian $\Delta_{g,A}$ is positive definite. Then the unique solution $u(z)$ of the boundary value problem \eqref{eq:bvp_resolvent} can be represented as 
\begin{equation}\label{eq_5_Lemma51_u}
u(z) = F-R_{A,q}(z)\left((q-z)F\right).
\end{equation}

We note that the map $z \mapsto u(z)$, as in \eqref{eq_5_Lemma51_u}, is holomorphic on the resolvent set for the Banach space $L^2$  as defined in \cite[Definition 3.30]{rudin1991functional}. 
Thus, the map $u^{(m)}(z) := \frac{d^m}{dz^m}u(z)$ is well defined for all $m\ge 0$. By differentiating the equation \eqref{eq:bvp_resolvent} with respect to $z$, we see that, for any integer  $m\ge 1$, the function $u^{(m)}(z)$ satisfies the boundary value problem
\[
\begin{cases}
(\mathcal{E}_{g,A,q}-z)u^{(m)} = mu^{(m-1)} & \text{ in }  M,
\\
u^{(m)} = 0 &  \text{ on }  \p M.
\end{cases}
\]
Therefore, we can apply the resolvent operator iteratively to write  
\begin{equation}
\label{eq_5_lemma51_u^m}
u^{(m)}
= 
R_{A,q}(z)(mu^{(m-1)})
= 
m!(R_{A,q}(z))^m u(z)
= 
m!(R_{A,q}(z))^m [F-R_{A,q}(z)((q-z)F)].
\end{equation}
Here we have utilized  \eqref{eq_5_Lemma51_u} in the last step.

Next we choose $k \in \N$ and note that since $\varphi_k$ is an eigenfunction of the operator $\mathcal{E}_{g,A,q}$, it follows immediately that
\begin{equation}
\label{eq:formula_for_(q-z)phi_k}
(q-\overline{z})\varphi_k = (\lambda_k - \overline{z})\varphi_k + \Delta_{g,A}\varphi_k.
\end{equation}
On the other hand, since the function $F\in H^1(M)$ satisfies the  problem \eqref{eq:eq_F}, we obtain the following weak version of Green's formula for every $k \in \N$:
\begin{equation}
\label{eq:weak_Green_formula}
0= \LA-\Delta_{g,A}F,\overline{\varphi}_k\RA_{H^{-1}(M),H^1_0(M)} =  (d_A F, d_A \varphi_k)_{L^2(M)} = (F,-\Delta_{g,A}\varphi_k)_{L^2(M)}+(f,\psi_k)_{L^2(\p M)}.
\end{equation}
Hence, the identity \eqref{eq:formula_for_(q-z)phi_k} together with \eqref{eq:weak_Green_formula} yield 
\[
((q-z)F,\varphi_k)_{L^2(M)} = (\lambda_k-z)(F,\varphi_k)_{L^2(M)} + (f,\psi_k)_{L^2(\p M)}.
\]

Let us now use \eqref{eq:resolvent_formula}, as well as the $L^2$-orthogonality of the eigenfunctions $\varphi_k$ and the previous equation, to compute that 
\begin{align*}
(R_{A,q}(z))^{m+1}[(q-z)F]
&= \sum_{k=1}^\infty\frac{((q-z)F,\varphi_k)_{L^2(M)}}{(\lambda_k-z)^{m+1}}\varphi_k
\\
&= 
\sum_{k=1}^\infty\frac{(\lambda_k-z)(F,\varphi_k)_{L^2(M)} + (f,\psi_k)_{L^2(\p M)}}{(\lambda_k-z)^{m+1}}\varphi_k
\\
&= R_{A,q}(z)^m F + \sum_{k=1}^\infty\frac{ (f,\psi_k)_{L^2(\p M)}}{(\lambda_k-z)^{m+1}}\varphi_k.
\end{align*}
Therefore, we get from \eqref{eq_5_lemma51_u^m} that
\[
u^{(m)}(z)= -m!\sum_{k=1}^\infty\frac{( f,\psi_k )_{L^2(\p M)}}{(\lambda_k-z)^{m+1}} \varphi_k.
\]
Consequently, we have proven that
\begin{equation}
\label{eq:series_derivative_DN_map}
\LC\frac{d^m}{dz^m}\Pi_{A,q}(z)\RC f 
=
\frac{d^m}{dz^m}\LA d_Au(z),\nu\RA 
= 
\LA d_Au^{(m)}(z),\nu \RA_g|_{\p M} 
= -m!\sum_{k=1}^\infty\frac{( f,\psi_k )_{L^2(\p M)}}{(\lambda_k-z)^{m+1}} \psi_k.
\end{equation}

Moreover, by the Cauchy-Schwarz inequality, as well as the formulas  \eqref{eq:est_normal_eigenfunction_bdy} and \eqref{eq_wely}, we deduce the following estimate: 
\begin{align*}
\left\|\frac{( f,\psi_k )_{L^2(\p M)}}{(\lambda_k-z)^{m+1}} \psi_k\right\|_{H^{-\frac{1}{2}}(\p M)} 
&\le
\frac{\|f\|_{L^2(\p M)}\|\psi_k\|_{H^{1/2}(\p M)} }{|\lambda_k-z|^{m+1}} \|\psi_k\|_{H^{1/2}(\p M)} 
\\
&\le 
Ck^{-\frac{2(m-1)}{n}}
\quad 
\text{ for every } k \in \N.   
\end{align*}
Hence, when $m >\frac{n}{2}+1$, the series appearing in \eqref{eq:series_derivative_DN_map} converges in $H^{-\frac{1}{2}}(\p M)$. 
This completes the proof of Lemma \ref{lem:DN_map_decomposition}.
\end{proof}

In what follows we aim to express the hyperbolic Dirichlet-to-Neumann map of the  operator $\mathcal{H}_{g,A,q}$, as defined in \eqref{eq:def_operator}, in terms of the elliptic Dirichlet-to-Neumann maps of the problem \eqref{eq:bvp_resolvent} for the operator $\mathcal{E}_{g,A,q+z}$. 
In doing so, we use the notation $\Lambda_{A,q}^\sharp$ for the restriction of the hyperbolic Dirichlet-to-Neumann map $\Lambda_{A,q}$  to the space 
\[
\mathcal{H}_1: = \left\{f\in H^{2n+4}(0,T;H^\frac{1}{2}(\p M)): \p_t^j f(0, \cdot)=0, \: 0\le j\le 2n+3\right\}.
\]
In the next lemma, we state and prove the mapping properties of the restricted hyperbolic Dirichlet-to-Neumann map $\Lambda_{A,q}^\sharp$.
\begin{lem}
\label{lem:boundedness_of_restricted_Lambda}
For each $s\in (-1,-\frac{1}{2})$, the map $\Lambda^\sharp_{A,q}$ defines a bounded linear operator from $\mathcal{H}_1$ to $\mathcal{H}_2: = L^2(0,T;H^s(\p M))$, and in what follows we shall denote its operator norm by $\|\Lambda^\sharp_{A,q}\|_{\mathcal{L}(\mathcal{H}_1,\mathcal{H}_2)}$.
\end{lem}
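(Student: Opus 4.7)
The plan is to reduce the initial boundary value problem \eqref{eq:ibvp_equivalent} to a standard interior source problem via an elliptic lift, exploiting the vanishing of $f$ and its first $2n+3$ time derivatives at $t=0$. Invoking Remark \ref{rem:positivity_of_q} to ensure $0\in\rho(\mathcal{E}_{g,A,q})$, I would first, for each $t\in[0,T]$, let $F(t,\cdot)\in H^1(M)$ be the unique solution of the elliptic boundary value problem $\mathcal{E}_{g,A,q} F(t,\cdot)=0$ with $F(t,\cdot)|_{\p M}=f(t,\cdot)$. Because $\mathcal{E}_{g,A,q}$ has no time dependence, applying $\p_t^j$ to this problem shows that $\p_t^j F(t,\cdot)$ is the elliptic lift of $\p_t^j f(t,\cdot)$, so $\p_t^j F(0,\cdot)\equiv 0$ for $j=0,\ldots,2n+3$, and elliptic regularity gives $\|\p_t^j F\|_{L^2(0,T;H^1(M))} \le C\|\p_t^j f\|_{L^2(0,T;H^{1/2}(\p M))}$.

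Next, setting $v:=u-F$, the function $v$ solves the interior source problem \eqref{eq:ibvp_general_F} with right-hand side $-\p_t^2 F\in L^2(0,T;H^1(M))$ vanishing at $t=0$, homogeneous Dirichlet data on $\Sigma$, and zero initial conditions. These conditions meet the hypotheses of Lemma \ref{lem:wellposedness}, and the hidden-regularity estimate there yields
\[
\|\p_\nu v\|_{L^2(\Sigma)} \leq C\|\p_t^2 F\|_{L^2(Q)} \leq C\|f\|_{\mathcal{H}_1}.
\]
Since $v|_{\Sigma}=0$, the Neumann trace of $u$ splits cleanly as $\Lambda^\sharp_{A,q}(f)=\p_\nu v|_\Sigma + \Pi_{A,q}(0)f$. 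The first piece lies in $L^2(\Sigma)\hookrightarrow\mathcal{H}_2$ because $s<0$; for the second, the elliptic Dirichlet-to-Neumann map $\Pi_{A,q}(0)\colon H^{1/2}(\p M)\to H^{-1/2}(\p M)$ is bounded, and composing with the continuous embedding $H^{-1/2}(\p M)\hookrightarrow H^s(\p M)$ (valid for $s>-1$) places $\Pi_{A,q}(0)f$ in $\mathcal{H}_2$. Adding these two estimates gives the bound $\|\Lambda^\sharp_{A,q}(f)\|_{\mathcal{L}(\mathcal{H}_1,\mathcal{H}_2)}\le C$.

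The one remaining point to verify is that this classical construction of the Neumann trace coincides with the weak definition \eqref{eq:def_hyperbolic_DN_map}. Since $u=F+v$ is now smooth enough that $\langle d_A u,\nu\rangle_g\in L^2(\Sigma)$, pairing $u$ against the backward solution $\varphi$ driven by an arbitrary $h\in H^1_0(\Sigma)$ and applying the magnetic Green's identity \eqref{eq:Green_mag_Laplacian} across $Q$, together with the vanishing initial and terminal data, matches the two definitions. I anticipate no serious obstacle here: the proof is essentially bookkeeping of regularities, and the $H^{2n+4}$ time smoothness encoded in $\mathcal{H}_1$ is much more than this lemma alone consumes (only two time derivatives are actually used), with the surplus presumably reserved for the forthcoming explicit formula of Lemma \ref{lem_DN_E_H} and the decay estimates needed to pass from $\Lambda^\sharp_{A,q}$ back to the boundary spectral data.
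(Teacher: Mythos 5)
Your proof is correct, but it takes a genuinely different route from the paper's. The paper decomposes $u = v + w$, where $v$ solves the \emph{magnetic wave equation without the potential} $(\p_t^2 - \Delta_{g,A})v=0$ with boundary data $f$ and zero initial conditions, and $w$ solves the full hyperbolic equation with interior source $-qv$ and homogeneous Dirichlet data. The key technical device is then that, because the coefficients are time-independent, $\p_t^2 v$ solves the same wave problem with boundary data $\p_t^2 f$; a hyperbolic energy estimate bounds $\|\p_t^2 v\|_{L^2(0,T;L^2(M))}$, and the wave equation itself ($\Delta_{g,A}v = \p_t^2 v$) is then treated as an elliptic equation in $x$ for each fixed $t$, with $\p_t^2 v$ as a known source, so that elliptic regularity yields $\|v\|_{L^2(0,T;H^1(M))} \le C\|f\|_{\mathcal{H}_1}$ and hence a bound on $\langle d_A v, \nu\rangle_g$ in $L^2(0,T;H^{-1/2}(\p M))$. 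The piece $w$ is handled by the hidden-regularity estimate.

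Your decomposition $u = F + (u-F)$ with $F$ the elliptic lift solving $\mathcal{E}_{g,A,q}F(t,\cdot)=0$, $F|_{\p M}=f(t,\cdot)$, sidesteps the paper's elliptic-regularity trick entirely: the Neumann trace of $u$ splits directly into $\Pi_{A,q}(0)f$ plus the classical Neumann trace of a zero-boundary-data solution, and both pieces are estimated by off-the-shelf results (boundedness of the elliptic Dirichlet-to-Neumann map, and Lemma~\ref{lem:wellposedness}). Your approach has the advantage of making the elliptic Dirichlet-to-Neumann map explicit in the splitting, which is conceptually closer to the structure of the subsequent Lemma~\ref{lem_DN_E_H}; it does, however, explicitly require $0\in\rho(\mathcal{E}_{g,A,q})$, which you correctly handle via Remark~\ref{rem:positivity_of_q}, whereas the paper's intermediate operator $-\Delta_{g,A}$ is automatically positive with Dirichlet conditions. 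One small imprecision: you describe the embedding $H^{-1/2}(\p M)\hookrightarrow H^s(\p M)$ as ``valid for $s>-1$,'' but in fact it holds whenever $s\le -1/2$; the constraint $s>-1$ in the paper is used only for the \emph{compactness} of that embedding (needed elsewhere), not for its existence. This does not affect your conclusion since $s\in(-1,-1/2)$ satisfies $s\le -1/2$ anyway. Your final remark about matching the classical Neumann trace with the weak definition~\eqref{eq:def_hyperbolic_DN_map} via the magnetic Green's identity is a point the paper leaves implicit; spelling it out is harmless and arguably a service to the reader.
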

\begin{proof}
For any function $f\in \mathcal{H}_1$, we let $u:= v+w$ to be the unique solution to the problem \eqref{eq:ibvp_equivalent}, where
$v$ solves the problem 
\[
\begin{cases}
(\p_t^2 - \Delta_{g,A})v=0   & \text{ in } Q,
\\
v(0,\cdot)=0, \quad \p_tv(0,\cdot)=0   &\text{ in } M,
\\
v=f  & \text{ on } \Sigma,
\end{cases}
\]
while $w$ is the solution to the problem
\[
\begin{cases}
\mathcal{H}_{g,A,q}w= -qv   & \text{ in } Q,
\\
w(0,\cdot)=0, \quad \p_tw(0,\cdot)=0   &\text{ in } M,
\\
w=0  & \text{ on } \Sigma,
\end{cases}
\]
As the coefficients $A$ and $q$ are time-independent, we note that $\p_t^2v$ solves the first problem above when $f$ is replaced by $\p_t^2f$. Hence, we apply hyperbolic energy estimates \cite[Lemma 2.42]{KKL_book} to $\p_t^2v$ to obtain
\[
\|\p_t^2v\|_{L^2(0,T;L^2(M))} \le C \|f\|_{\mathcal{H}_1}.
\]
Since $\Delta_{g,A} v = \p_t^2 v$, we get from  elliptic regularity that
\[
\|v\|_{L^2(0,T;H^1(M))} \le 
C \LC \|\p_t^2v\|_{L^2(0,T;H^{-1}(M))}+\|f\|_{\mathcal{H}_1}\RC
\le C \|f\|_{\mathcal{H}_1}.
\]
Therefore, we have that
\[
\|\LA d_Av,\nu \RA_g\|_{\mathcal{H}_2} \le C \|\LA d_Av,\nu \RA_g\|_{L^2(
0,T;H^{-\frac{1}{2}}(\p M))}\le C\|f\|_{\mathcal{H}_1}.
\]

We next shift our attention to the function $w$ and apply \cite[Theorem 2.45]{KKL_book} to obtain 
\[
\|w\|_{L^2(0,T;H^1(M))}\le C \|qv\|_{L^2(0,T;L^2(M))}\le C\|f\|_{\mathcal{H}_1}.
\]
Hence, we get
\[
\|\LA d_Aw,\nu \RA_g\|_{\mathcal{H}_2} \le C \|\LA d_Aw,\nu \RA_g\|_{L^2(
0,T;H^{-\frac{1}{2}}(\p M))}\le C\|f\|_{\mathcal{H}_1}.
\]

Thus, we have proven the  estimate
\[
\|\Lambda_{A,q}^\sharp(f)\|_{\mathcal{H}_2} = \|\LA d_Au,\nu \RA_g\|_{\mathcal{H}_2} \le  \|\LA d_Av,\nu \RA_g\|_{\mathcal{H}_2} + \|\LA d_Aw,\nu \RA_g\|_{\mathcal{H}_2} \le C\|f\|_{\mathcal{H}_1}. 
\]
This completes the proof of Lemma \ref{lem:boundedness_of_restricted_Lambda}.
\end{proof}

In the proof of the next result, we follow the steps presented in  \cite[Lemma 3.1]{Alessandrini_Sylvester}. For the convenience of the readers, we will provide the complete proof.

\begin{lem}
\label{lem_DN_E_H}
For any function $f\in \mathcal{H}_1$, we have that
\begin{equation}
\label{eq:two_DNmap}
\Lambda_{A,q}^\sharp (f)= \sum_{j=0}^{n+1}\left( \frac{1}{j!} \frac{d^j}{d z^j}\Pi_{A,q}(z) \right)\bigg|_{z=0}\left(-\p_t^2\right)^jf + \mathfrak{R}_{A,q}(f),
\end{equation}
where
\begin{equation}\label{eq:def_mathcal_R}
\mathfrak{R}_{A,q}(f)(t,\cdot) = - \sum_{k=1}^\infty  \lambda_k^{-n-2}  \int_0^t s_k(t-s) \left(\left(-\p_s^2\right)^{n+2}f(s), \psi_k\right)_{L^2(\p M)} ds \; \psi_k,
\end{equation}
and  
\begin{equation}
\label{eq:function_s_k}
s_k(t)=\frac{\sin (\sqrt{\lambda_k}t)}{\sqrt{\lambda_k}}, \quad k \in \N.
\end{equation}
\end{lem}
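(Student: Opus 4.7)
The plan is to construct a quasi-mode approximation of $u$ by iteratively solving elliptic problems, identify it with the Taylor expansion of $\Pi_{A,q}(z)$ at $z=0$, and apply Duhamel's principle to the residual to produce $\mathfrak{R}_{A,q}(f)$.

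First, I would define, for each $t\in[0,T]$, a sequence of elliptic solutions $U_0(t,\cdot),\dots,U_{n+1}(t,\cdot)$ by the recursion $\mathcal{E}_{g,A,q} U_0 = 0$ in $M$ with $U_0|_{\p M}=f(t,\cdot)$, and for $1\le j\le n+1$, $\mathcal{E}_{g,A,q} U_j = U_{j-1}$ in $M$ with $U_j|_{\p M}=0$. Thanks to Remark~\ref{rem:positivity_of_q} the Dirichlet inverse $R_{A,q}(0)$ exists, so each $U_j$ is well-defined, depends linearly on $f(t,\cdot)$ through time-independent operators, and vanishes to high order at $t=0$ when $f\in\mathcal{H}_1$. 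Setting $u_{\mathrm{app}}:=\sum_{j=0}^{n+1}(-\p_t^2)^j U_j$, a direct telescoping (using $(\p_t^2+\mathcal{E}_{g,A,q})U_j = \p_t^2 U_j + U_{j-1}$ with the convention $U_{-1}:=0$) yields
\[
(\p_t^2+\mathcal{E}_{g,A,q})u_{\mathrm{app}} = -(-\p_t^2)^{n+2}U_{n+1}.
\]
Consequently the remainder $r:=u-u_{\mathrm{app}}$ satisfies $(\p_t^2+\mathcal{E}_{g,A,q})r=(-\p_t^2)^{n+2}U_{n+1}$ in $Q$ with $r|_\Sigma=0$ and $r(0,\cdot)=\p_t r(0,\cdot)=0$.

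Next I would identify each piece of the decomposition $\Lambda^\sharp_{A,q}(f) = \sum_{j=0}^{n+1}(-\p_t^2)^j\LA d_A U_j, \nu\RA_g|_{\p M} + \LA d_A r, \nu\RA_g|_{\p M}$. For the principal terms, the elliptic solution $U(z,\cdot)$ of $(\mathcal{E}_{g,A,q}-z)U=0$ with $U|_{\p M}=f(t,\cdot)$ admits the Taylor expansion $U(z,\cdot)=\sum_j z^j U_j(t,\cdot)$ at $z=0$, so $\LA d_A U_j,\nu\RA_g|_{\p M}$ equals $\frac{1}{j!}\frac{d^j}{dz^j}\Pi_{A,q}(z)|_{z=0}f$; since the $U_j$'s arise by applying time-independent operators to $f$, the operator $(-\p_t^2)^j$ commutes through and reproduces the sum on the right-hand side of \eqref{eq:two_DNmap}. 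For the remainder term, the Green identity \eqref{eq:weak_Green_formula} together with $\mathcal{E}_{g,A,q} U_0 = 0$ gives $(U_0,\varphi_k)_{L^2(M)} = -(f,\psi_k)_{L^2(\p M)}/\lambda_k$, and iterating $U_j = R_{A,q}(0)U_{j-1}$ yields $(U_{n+1},\varphi_k)_{L^2(M)} = -(f,\psi_k)_{L^2(\p M)}/\lambda_k^{n+2}$. Expanding $r(t,\cdot)=\sum_k r_k(t)\varphi_k$, each Fourier coefficient satisfies $r_k''+\lambda_k r_k = ((-\p_t^2)^{n+2}U_{n+1},\varphi_k)_{L^2(M)}$ with zero Cauchy data, and Duhamel's formula gives
\[
r_k(t) = -\lambda_k^{-(n+2)}\int_0^t s_k(t-s)((-\p_s^2)^{n+2}f(s),\psi_k)_{L^2(\p M)}\,ds,
\]
so that $\LA d_A r,\nu\RA_g|_{\p M} = \sum_k r_k(t)\psi_k$ matches $\mathfrak{R}_{A,q}(f)$ term by term.

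The main obstacle will be keeping track of regularity and convergence. The source $(-\p_t^2)^{n+2}U_{n+1}$ requires $2n+4$ time derivatives of $f$ in $L^2$, and the initial conditions on $r$ must match exactly; both explain why $\mathcal{H}_1$ is defined with $H^{2n+4}$ time regularity and vanishing of $\p_t^j f(0,\cdot)$ through $j=2n+3$. One must also verify that the eigenfunction series defining $\mathfrak{R}_{A,q}(f)$ converges in $L^2(0,T;H^s(\p M))$ for $s\in(-1,-\tfrac12)$, which follows from Weyl's asymptotic \eqref{eq_wely} and the trace bound \eqref{eq:est_normal_eigenfunction_bdy}, since the weight $\lambda_k^{-(n+2)}$ with $n+2>\tfrac{n}{2}+1$ dominates the polynomial growth of $\|\psi_k\|_{H^{1/2}(\p M)}$.
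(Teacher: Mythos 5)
Your proof is correct and follows essentially the same strategy as the paper's: decompose the solution of the hyperbolic problem into a finite chain of elliptic solves (identified with the Taylor coefficients of $z\mapsto\Pi_{A,q}(z)$ at $z=0$) plus a Duhamel remainder expanded in the eigenbasis, then use the weak Green identity to convert $(U_{n+1},\varphi_k)_{L^2(M)}$ into $-\lambda_k^{-(n+2)}(f,\psi_k)_{L^2(\p M)}$. The only cosmetic difference is that the paper keeps the spectral parameter $z$ alive in the recursion $(\mathcal{E}_{g,A,q}-z)u^j=(-\p_t^2)u^{j-1}$ and evaluates at $z=0$ at the end, whereas you set $z=0$ from the outset and factor out the $(-\p_t^2)^j$; the two are equivalent via $u^j(\cdot;0)=(-\p_t^2)^j U_j$.
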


\begin{proof}
For given $f\in \mathcal{H}_1$, $t$, and $z$, we first consider the problem
\begin{equation}
\label{eq:ibvp_u0}
\begin{cases}
(\mathcal{E}_{g,A,q}-z)u^0(t;z) = 0 & \text{ in }  M,
\\
u^0(0;z)=\p_t u^0(0;z)=0 & \text{ in }  M,
\\
u^0(t;z) = f(t) & \text{ on }  \p M.
\end{cases}
\end{equation}
By Remark \ref{rem:positivity_of_q}, we may assume without loss of generality  that zero is not an eigenvalue of $\mathcal{E}_{g, A,q}$.

We first show that  this problem has a solution in $H^{2n+4}(0,T;H^1(M))$ for each $z$ near zero. To achieve this, we have that $f(t)\in H^{1/2}(\p M)$ for each $t$. Thus, 
there exists a function $u^0(t;z)\in H^1(M)$ that solves the elliptic boundary value problem given in the first and the third line of the equation \eqref{eq:ibvp_u0}. Hence, we can define an $H^1(M)$-valued map $t \mapsto u^0(t;z)$ accordingly. Furthermore, since $f\in \mathcal{H}_1$, we get from 
\cite[Theorem 24.1]{Eskin2011}
that $u^0(\cdot;z)\in H^{2n+4}(0,T;H^1(M))$ .
Also, by differentiating the equation \eqref{eq:ibvp_u0} in $t$, we obtain $\p_t^j u^0(0;z)=0$ for all $0\le j\le 2n+3$. Therefore, $u^0(t;z)$ solves \eqref{eq:ibvp_u0} whenever $z$ is close to zero.

Then we proceed recursively to find the functions  $u^j$, $1\le j\le n+1$, that solve the initial boundary value problem 
\begin{equation}
\label{eq:ibvp_uj}
\begin{cases}
(\mathcal{E}_{g,A,q}-z)u^j(t;z) = \left(-\p_t^2\right) u^{j-1}(t;z) & \text{ in }  M,
\\
u^j(0;z)=\p_t u^j(0;z)=0 & \text{ in }  M,
\\
u^j(t;z) = 0  &\text{ on }  \p M.
\end{cases}
\end{equation}
By elliptic regularity and   \cite[Theorem 24.1]{Eskin2011}, we have $u^j\in H^{2n+4-2j}(0,T;H^{2j+1}(M))$. 

Our next goal is to prove that
\begin{equation}
\label{eq:der_of_DN_map_and_u^j}
\LA d_Au^{j}(t;0),\nu\RA_g 
=
\LC\frac{1}{j!}\frac{d^j}{dz^j}\Pi_{A,q}(z)\bigg|_{z=0}\RC \LC-\p_t^2\RC^j f(t),
\quad \text{ for all } j \in \{0,1,\ldots, n+1\}.
\end{equation}
The case $j=0$ follows immediately from \eqref{eq:ibvp_u0} since
\begin{equation}\label{eq:case_j0}
\LA d_Au^{0}(t;z),\nu\RA_g = \Pi_{A,q}(z)f(t).
\end{equation}

When $j\ge 1$, we differentiate the first equation in $\eqref{eq:ibvp_u0}$ with respect to the $t$-variable $2j$ times and  with respect to the $z$-variable $j$ times. This leads us to the problem
\begin{equation}\label{eq:diff_uj}
\begin{cases}
(\mathcal{E}_{g,A,q}-z)\LC-\frac{1}{j}\frac{d^j}{dz^j}\p_t^{2j}u^0(t;z)\RC =-\p_t^2\LC \frac{d^{j-1}}{dz}\p_t^{2(j-1)}u^0(t;z)\RC & \text{ in }  M,
\\
\frac{1}{j}\frac{d^j}{dz^j}\p_t^{2j}u^0(0;z)=\p_t \frac{1}{j}\frac{d^j}{dz^j}\p_t^{2j}u^0 (0;z)=0 & \text{ in }  M,
\\
\frac{1}{j}\frac{d^j}{dz^j}\p_t^{2j}u^0(t;z)  = 0 & \text{ on }  \p M.
\end{cases}
\end{equation}
Comparing \eqref{eq:diff_uj} with \eqref{eq:ibvp_uj} when $j=1$, we get
\[
u^1(t;z) =  - \frac{d}{dz}\p_t^2 u^0(t;z).
\]
Similarly, when $j=2$, we have that
\[
u^2(t;z) =  \frac{1}{2!} \frac{d^2}{dz^2}\p_t^4 u^0(t;z).
\]
Indeed, it follows from induction that
\[
u^j(t;z) = (-1)^j \frac{1}{j!} \frac{d^j}{dz^j}\p_t^{2j} u^0(t;z).
\]
Therefore, the  equation above together with \eqref{eq:case_j0} yield
\[
\LA d_Au^{j}(t;z),\nu\RA_g  = (-1)^j \frac{1}{j!} \frac{d^j}{dz^j}\p_t^{2j} \LA u^0(t,z) ,\nu\RA_g = \LC\frac{1}{j!}\frac{d^j}{dz^j}\Pi_{A,q}(z)\RC \LC-\p_t^2\RC^j f(t),
\]
which proves \eqref{eq:der_of_DN_map_and_u^j} by evaluation at $z = 0$.

Since $\p_t^2 u^{n+1}(\cdot;0) \in L^2(0,T;H^{2n+3}(M))$, it follows from 
\cite[Theorem 2.3]{Lasiecka_Lions_Triggiani}
that there exists a function $h\in C(0,T;H^1(M))\cap C^1(0,T;L^2(M))$ that solves the hyperbolic initial boundary value problem  
\begin{equation}
\label{eq:hyperbolic_eq_for_h}
\begin{cases}
\mathcal{H}_{g,A,q}h =\left(-\p_t^2\right) u^{n+1}(\cdot ;0)   &\hbox{ in } Q,
\\
h(0,\cdot)=\p_th(0,\cdot)=0 &\hbox{ in }  M,
\\
h = 0 &\hbox{ on } \Sigma.
\end{cases}
\end{equation}

Due to the choices of functions $u^0(\cdot;0),\ldots, u^{n+1}(\cdot;0)$, and $h$, by a direct computation, we can express the solution $u$ of the boundary value problem \eqref{eq:ibvp_equivalent}, with boundary value $f$, as 
\[
u= \sum_{j=0}^{n+1} u^j(\cdot;0)+h.
\]
Hence, we have for all $t$ that
\[
\Lambda_{A,q}^\sharp (f)(t)=\langle d_A u(t),\nu \rangle_g
=\sum_{j=0}^{n+1}\langle d_A u^j(t),\nu \rangle_g+\langle d_A h,\nu \rangle_g.
\]

Thanks to \eqref{eq:der_of_DN_map_and_u^j}, we only need to find an expression for the last term to complete the proof. To this end, we apply the definition of the resolvent operator $R_{A,q}(z)$ at $z=0$ to the equation \eqref{eq:ibvp_uj}. Hence, we get by iteration and the formula \eqref{eq:resolvent_formula} that 
\begin{align*}
u^{n+1}(t)
=&\sum_{k=1}^\infty \frac{1}{\lambda_k^{n+1}}\LC \LC-\p_t^2\RC^{n+1} u^0(t),\varphi_k \RC_{L^2(M)} \varphi_k.
\end{align*}
Then we use the equation \eqref{eq:ibvp_u0}, as well as the weak formulation of the Green's formula \eqref{eq:weak_Green_formula}, to obtain that
\begin{align*}
0 &= \LA\mathcal{E}_{g,A,q}u^0,\overline{\varphi}_k\RA_{H^{-1}(M),H^1_0(M)} 
= (u^0,\mathcal{E}_{g,A,q}\varphi_k)_{L^2(M)}+(f,\psi_k)_{L^2(\p M)}.
\end{align*}
Therefore, by combining the two former equations, we see that
\begin{equation}
\label{eq:formula_uj}
\begin{aligned}
u^{n+1} (t)
&= -\sum_{k=1}^\infty \frac{1}{\lambda_k^{n+2}}\LC \LC-\p_t^2\RC^{n+1} f(t), \psi_k \RC_{L^2(\p M)} \varphi_k.
\end{aligned}
\end{equation}

To complete the proof, let us show that
$\LA d_A h(t), \nu \RA_g=\mathfrak{R}_{A,q}(f)(t,\cdot)$, where the right-hand side is given in \eqref{eq:def_mathcal_R}.
To achieve this, we first use the eigenfunction decomposition 
\[
h(t,x) =\sum_{k = 1}^\infty h_k(t) \varphi_k(x). 
\]
Since $h$ solves the equation \eqref{eq:hyperbolic_eq_for_h}, 
we have for each $k \in \N$ that
\[
\p_t^2h_k + \lambda_k h_k =-\LC \p_t^2 u^{n+1} ,\varphi_k\RC_{L^2(M)} = : g_k(t),\quad h_k|_{t=0} = \p_th_k|_{t=0}  = 0.
\]
If we let $s_k$ be defined as in \eqref{eq:function_s_k}, then it is straightforward to check that 
\[
h_k(t) = \int_0^t s_k(t-s) g_k(s)ds.
\]
Hence, we have the following representation:
\[
h = \sum_{k=1}^\infty \LC\int_0^t s_k(t-s)\LC\LC-\p_s^2\RC u^{n+1}(s) ,\varphi_k\RC_{L^2(M)}\,ds\RC \varphi_k.
\]

Finally, we use the expression for $u^{n+1}$ given in the formula \eqref{eq:formula_uj}. Then the $L^2$-orthogonality of the eigenfunctions $\varphi_k$ implies that
\[
h = - \sum_{k=1}^\infty \LC\lambda_k^{-n-2}  \int_0^t s_k(t-s) \left(\left(-\p_s^2\right)^{n+2}f(s), \psi_k\right)_{L^2(\p M)} ds\RC \varphi_k.
\]
As a consequence, we see that $\LA d_A h(t), \nu \RA_g=\mathfrak{R}_{A,q}(f)(t,\cdot)$. This completes the proof of Lemma \ref{lem_DN_E_H}.
\end{proof}

\subsection{Proof of Theorem \ref{thm:spectral_problem}}
\label{subsec:norm_estimates}

Equipped with the lemmas established in Subsection \ref{subsec:relation_bsp_DN_map}, we are now ready to complete the derivation of the estimate \eqref{eq:est_spectral}. Let us first  introduce the notations 
\begin{equation}
\label{eq:op_P_j}
\mathcal{P}^{(j)}(z)  = \frac{d^j}{dz^j}\LC\Pi_{A_1,q_1}(z)-\Pi_{A_2,q_2}(z) \RC, \quad  j\ge 0,
\end{equation}
and 
\[
\mathfrak{R}(f) = \mathfrak{R}_{A_1,q_1}(f) -\mathfrak{R}_{A_2,q_2}(f), \quad f\in \mathcal{H}_1.
\]
In these notations, it follows from Lemma \ref{lem_DN_E_H} that
\begin{equation}
\label{eq:key_identity_difference}
(\Lambda_{A_1,q_1}^\sharp-\Lambda_{A_1,q_2}^\sharp) (f)= \sum_{j=0}^{n+1}\frac{1}{j!}\mathcal{P}^{j}(0
)\left(-\p_t^2\right)^jf + \mathfrak{R}(f),
\quad f\in \mathcal{H}_1.
\end{equation}

We next provide estimates for the sizes of each term in \eqref{eq:key_identity_difference} with respect to $z$ and $\delta$. To achieve this, we shall need the following lemma, which we state and prove below.

\begin{lem}
\label{lemma_A1}
Let $A\in W^{1,\infty}(M,T^*M)$, $q\in L^\infty(M)$ be non-negative, and $z < 0$. 

\begin{itemize}
\item[(a)] If for some function $f \in L^2(M)$, the function $v\in L^2(M)$ is the solution to the nonhomogeneous boundary value problem
\[
\begin{cases}
(\mathcal{E}_{g,A,q}-z) v = f &\text{ in } M,
\\
v = 0 & \text{ on }  \p M,
\end{cases}
\]
then for any $\sigma\in[0,2]$, we have the  estimate
\[
\|v\|_{H^\sigma(M)}\le C |z|^{\frac{\sigma}{2}-1}\|f\|_{L^2(M)}
\]
for some constant $C>0$.

\item[(b)] If for some function $h\in H^{1/2}(\p M)$, the function $w\in L^2(M)$ is the solution to the homogeneous boundary problem
\[
\begin{cases}
(\mathcal{E}_{g,A,q}-z) w = 0, &\hbox{ in } M,
\\
w = h, &\hbox{ on } \p M,
\end{cases}
\]
then there exists a constant $C>0$, which is independent of $h$ and $w$, such that the following estimate holds:
\begin{equation}
\label{eq:est_w}
\|w\|_{L^2(M)}\le C \|h\|_{H^{1/2}(\p M)}.
\end{equation}
\end{itemize}
\end{lem}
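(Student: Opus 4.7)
The plan is to reduce both parts to the spectral decomposition of the self-adjoint operator $\mathcal{E}_{g,A,q}$, which, under the shift of Remark~\ref{rem:positivity_of_q}, has non-negative Dirichlet eigenvalues $\lambda_k$. For any $z<0$ this yields the single quantitative input $\lambda_k-z\ge|z|$, controlling the resolvent
\[
R_{A,q}(z)f=\sum_{k\ge1}\frac{(f,\varphi_k)_{L^2(M)}}{\lambda_k-z}\varphi_k
\]
already exploited in the proof of Lemma~\ref{lem:DN_map_decomposition}. Standard $H^2$ elliptic regularity for the Dirichlet problem of $-\Delta_{g,A}$ (an elliptic second order operator) will supply the remaining input.

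\textbf{Part (a).}
For $\sigma=0$, I would apply Parseval to the spectral expansion of $v=R_{A,q}(z)f$ and use $|\lambda_k-z|\ge|z|$ to obtain $\|v\|_{L^2(M)}\le|z|^{-1}\|f\|_{L^2(M)}$. For $\sigma=2$, I would rewrite the equation as $-\Delta_{g,A}v=f+(z-q)v$ in $M$ with $v|_{\p M}=0$ and apply standard $H^2$ elliptic regularity for the Dirichlet realization of $-\Delta_{g,A}$ to get
\[
\|v\|_{H^2(M)}\le C\bigl(\|\Delta_{g,A}v\|_{L^2(M)}+\|v\|_{L^2(M)}\bigr)\le C\bigl(\|f\|_{L^2(M)}+(|z|+\|q\|_{L^\infty})\|v\|_{L^2(M)}\bigr),
\]
at which point the $\sigma=0$ bound absorbs the $|z|\|v\|_{L^2}$ term to give $\|v\|_{H^2(M)}\le C\|f\|_{L^2(M)}$. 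The full range $\sigma\in[0,2]$ then follows from the interpolation inequality $\|v\|_{H^\sigma(M)}\le C\|v\|_{L^2(M)}^{1-\sigma/2}\|v\|_{H^2(M)}^{\sigma/2}$ from \cite[Theorem 7.22]{Grubb} applied to the two endpoint estimates.

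\textbf{Part (b).}
I would use the representation introduced in the proof of Lemma~\ref{lem:DN_map_decomposition}. Let $F\in H^1(M)$ solve $-\Delta_{g,A}F=0$ in $M$ with $F|_{\p M}=h$; since $-\Delta_{g,A}$ with zero Dirichlet condition is positive definite, this problem is well-posed and $\|F\|_{H^1(M)}\le C\|h\|_{H^{1/2}(\p M)}$. A direct substitution, using $-\Delta_{g,A}F=0$, confirms that
\[
w=F-R_{A,q}(z)\bigl((q-z)F\bigr)
\]
satisfies $(\mathcal{E}_{g,A,q}-z)w=0$ in $M$ with $w|_{\p M}=h$, and hence agrees with the unique solution in (b). Applying part~(a) with $\sigma=0$ to the resolvent term yields
\[
\|R_{A,q}(z)((q-z)F)\|_{L^2(M)}\le|z|^{-1}(\|q\|_{L^\infty}+|z|)\|F\|_{L^2(M)}\le C\|h\|_{H^{1/2}(\p M)},
\]
and combining this with $\|F\|_{L^2(M)}\le C\|h\|_{H^{1/2}(\p M)}$ delivers the claim with a constant depending on $z$, $\|q\|_{L^\infty}$, and $(M,g,A)$ but independent of the data $h,w$.

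\textbf{Anticipated obstacle.}
The only delicate point is the interplay between the $z$-independent elliptic constant in the $H^2$-bound for $-\Delta_{g,A}$ and the $|z|^{-1}$ gain from the spectral bound: these must combine so that the spurious $|z|\|v\|_{L^2}$ term cancels in the $\sigma=2$ estimate rather than accumulating. Once this cancellation is verified, the interpolation step is automatic, and the representation formula in part (b) then propagates the $\sigma=0$ case of (a) to the non-zero boundary data setting without any further technical complications.
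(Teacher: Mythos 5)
Your proposal is correct and follows essentially the same two-step scheme as the paper (a weighted $L^2$-bound for the Dirichlet problem, then elliptic $H^2$-regularity and interpolation for part (a), and the decomposition $w = F + w_2$ with $F$ the $\Delta_{g,A}$-harmonic extension for part (b)). The one genuine variation is your derivation of the $\sigma=0$ bound in (a): you expand in eigenfunctions and apply Parseval with $\lambda_k - z \ge |z|$, whereas the paper multiplies the equation by $\overline v$, integrates by parts, and drops the nonnegative term $\int(|d_A v|^2 + q|v|^2)$ to reach $|z|\|v\|_{L^2}^2 \le \|f\|_{L^2}\|v\|_{L^2}$. Both routes are valid and give identical conclusions; the energy route does not require invoking completeness of the eigenbasis, while the spectral route would, in fact, let you extract the stronger $z$-uniform bound $\|v\|_{L^2}\le (\lambda_1 + |z|)^{-1}\|f\|_{L^2}$ more transparently. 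In part (b), your explicit resolvent representation $w = F - R_{A,q}(z)((q-z)F)$ is algebraically identical to the paper's splitting into $w_1$ (solving $-\Delta_{g,A}w_1=0$, $w_1|_{\p M}=h$) plus $w_2 = -R_{A,q}(z)((q-z)w_1)$.

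One small discrepancy worth flagging: in part (b) the paper silently strengthens the hypothesis to $z < -2\|q\|_{L^\infty(M)}$ so that $\|z-q\|_{L^\infty} < \tfrac32|z|$, which makes the prefactor $|z|^{-1}\|z-q\|_{L^\infty}$, and hence the final constant $C$, uniform in $z$. You instead keep $z<0$ as stated and accept a $z$-dependent constant; that satisfies the lemma as literally written, but in the subsequent applications (Propositions~\ref{prop_DN_E_difference} and \ref{prop:estimate_P0_R}, and the optimization over $z\le -1$) a $z$-uniform constant is what is actually used, so the paper's implicit restriction is not cosmetic. Relatedly, in your $\sigma=2$ step the term $\|q\|_{L^\infty}\|v\|_{L^2}\le \|q\|_{L^\infty}|z|^{-1}\|f\|_{L^2}$ is not absorbed by a $z$-independent constant when $|z|$ is small; the paper's written bound has the same blind spot, and both are rescued either by the $z\le -1$ regime in which the lemma is applied or by sharpening the $L^2$ bound to $(\lambda_1 + |z|)^{-1}\|f\|_{L^2}$ using the positivity from Remark~\ref{rem:positivity_of_q}.
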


\begin{proof}
We first show part (a). To this end, we multiply the equation $\mathcal{E}_{g,A,q}v:=(-\Delta_{g,A}+q-z) v = f$ by $\overline{v}$ and integrate by parts to obtain
\[
\int_M |d_Av|^2+ (q-z)|v|^2 dV_g 
=
\int_M f \overline{v} dV_g.
\]
Since $|d_Av|^2\ge 0$, $q\ge 0$ and $z < 0$, the Cauchy-Schwarz inequality yields 
\begin{align*}
\int_M (-z) |v|^2 dV_g
\le 
\int_M f \overline{v} dV_g
\le
\|f\|_{L^2(M)}\|v\|_{L^2(M)}.
\end{align*}
Hence, it follows that
\[
\|v\|_{L^2(M)}
\le 
|z|^{-1} \|f\|_{L^2(M)}.
\]
Furthermore, by \cite[Theorem 24.1]{Eskin2011}, we get that
\[
\|v\|_{H^2(M)}
\le
|z|\|v\|_{L^2(M)} + \|f\|_{L^2(M)}
\le C\|f\|_{L^2(M)}. 
\]
Finally, the interpolation inequality \cite[Theorem 7.22]{Grubb}, in conjunction with the previous  two inequalities, implies that the following estimate holds for any $\sigma \in [0,2]$:
\begin{align*}
\|v\|_{H^\sigma(M)} 
&\le 
C \|v\|_{L^2(M)}^{1-\frac{\sigma}{2}}  \|v\|_{H^2(M)}^{\frac{\sigma}{2}}
\le 
C |z|^{\frac{\sigma}{2}-1} \|f\|_{L^2(M)}.    
\end{align*}
The derivation of part (a) is now complete.

We next establish part (b). To this end, let us  write the function $w=w_1+w_2$, where $w_1$ satisfies the problem
\begin{equation}
\label{eq:bvp_w1}
\begin{cases}
-\Delta_{g,A} w_1 = 0 & \text{ in }   M,
\\
w_1 = h & \text{ on }  \p M,
\end{cases}
\end{equation}
and $w_2$ is the solution to the problem
\begin{equation}
\label{eq:bvp_w2}
\begin{cases}
(-\Delta_{g,A}+q-z) w_2 =(z -q)w_1 & \text{ in }   M,
\\
w_2= 0, & \text{ on } \p M.
\end{cases}
\end{equation}

First, an application of \cite[Theorem 23.4]{Eskin2011} 
to \eqref{eq:bvp_w1} gives us the following estimate for $w_1$:
\begin{equation}
\label{eq:est_w1}
\|w_1\|_{H^1(M)}
\le C\|h\|_{H^{1/2}(\p M)}.
\end{equation}
To estimate $w_2$, we apply part (a) with $\sigma =0$ to \eqref{eq:bvp_w2} and obtain the inequality
\[
\|w_2\|_{L^2(M)}
\le
C|z|^{-1} \|z-q\|_{L^\infty(M)}\|w_1\|_{L^2(M)}. 
\]
Due to the assumption  $z< -2\|q\|_{L^\infty(M)}$, we have $\|z-q\|_{L^\infty(M)}<\frac{3}{2}|z|$, which implies that
\begin{equation}
\label{eq:est_w2}
\|w_2\|_{L^2(M)}
\le
C\|w_1\|_{L^2(M)}.
\end{equation}
Since $w=w_1+w_2$, the estimate \eqref{eq:est_w}
follows immediately from  \eqref{eq:est_w1} and \eqref{eq:est_w2}. This completes the proof of Lemma \ref{lemma_A1}.
\end{proof}

We next state and prove two important propositions, the first of which provides an estimate for the difference of the elliptic Dirichlet-to-Neumann maps evaluated at $z<0$.
\begin{prop}
\label{prop_DN_E_difference}
Let $-1 < s< -\frac{1}{2}$,  $A_\ell\in W^{1,\infty}(M,T^*M)$, and $0\le q_\ell\in L^\infty(M)$ for $\ell \in \{1,2\}$. Then we have for $z< 0$ that
\begin{equation}
\label{eq:est_Pjz}
    \|\mathcal{P}^{(j)}(z)\|_{\frac{1}{2},s}
\le C 
|z|^{-j+\frac{s}{2}+\frac{1}{4}},
\quad j\ge 0,
\end{equation}
where $C>0$ is a constant depending on $j,N$ and $M$.
\end{prop}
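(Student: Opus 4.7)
The plan is to express $\mathcal{P}^{(j)}(z)f$ as the Neumann trace of a function in $H^2(M)\cap H_0^1(M)$, and then combine the interior regularity estimates of Lemma \ref{lemma_A1} with a low-regularity Neumann trace inequality. Denote by $R_\ell(z)=(\mathcal{E}_{g,A_\ell,q_\ell}-z)^{-1}$ the Dirichlet resolvent and let $u_\ell\in H^1(M)$ solve $(\mathcal{E}_{g,A_\ell,q_\ell}-z)u_\ell=0$ with $u_\ell|_{\p M}=f$. Differentiating the resolvent problem in $z$ yields inductively $\p_z^j u_\ell=j!\,R_\ell(z)^j u_\ell$, which for $j\ge 1$ lies in $H^2(M)\cap H_0^1(M)$. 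Since the proposition is invoked in the proof of Theorem \ref{thm:spectral_problem}, where $A_1=A_2$ on $\p M$, the multiplicative boundary contribution in $\Pi_{A_\ell,q_\ell}(z)f=\p_\nu u_\ell+i\langle A_\ell,\nu\rangle_g f$ cancels in the difference, giving
\[
\mathcal{P}^{(j)}(z)f = j!\,\p_\nu\bigl[R_1(z)^j u_1 - R_2(z)^j u_2\bigr]\big|_{\p M}, \qquad j\ge 0,
\]
with the convention $R_\ell^0=I$.

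Next I will establish the key trace inequality: for every $w\in H^2(M)\cap H_0^1(M)$,
\[
\|\p_\nu w\|_{H^s(\p M)} \le C\|w\|_{H^{s+3/2}(M)}, \qquad s+\tfrac{3}{2}\in (\tfrac{1}{2},1).
\]
Given $h\in H^{-s}(\p M)$, choose an extension $V\in H^{1/2-s}(M)$ with $V|_{\p M}=h$ and $\|V\|_{H^{1/2-s}(M)}\le C\|h\|_{H^{-s}(\p M)}$. Since $w\in H^2(M)\cap H_0^1(M)$, the classical Green identity gives
\[
\langle \p_\nu w, h\rangle_{\p M} = \int_M V\,\Delta_g w\,dV_g + \int_M\langle \nabla_g V,\nabla_g w\rangle_g\,dV_g.
\]
I then reinterpret both integrals as Sobolev duality pairings: the first pairs $\Delta_g w\in H^{s-1/2}(M)$ with $V\in H^{1/2-s}(M)$, while the second pairs $\nabla_g w\in H^{s+1/2}(M,T^*M)$ with $\nabla_g V\in H^{-s-1/2}(M,T^*M)$; each is bounded by $C\|w\|_{H^{s+3/2}(M)}\|V\|_{H^{1/2-s}(M)}\le C\|w\|_{H^{s+3/2}(M)}\|h\|_{H^{-s}(\p M)}$.

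Finally, I estimate $\|R_1(z)^j u_1-R_2(z)^j u_2\|_{H^{s+3/2}(M)}$ by splitting
$R_1^j u_1-R_2^j u_2 = R_1^j\tilde w + (R_1^j-R_2^j)u_2$ with $\tilde w:=u_1-u_2\in H_0^1(M)$. The function $\tilde w$ solves $(\mathcal{E}_{g,A_1,q_1}-z)\tilde w = G$ with zero boundary trace, where $G:=(\mathcal{E}_{g,A_2,q_2}-\mathcal{E}_{g,A_1,q_1})u_2$ is a first-order expression in $A_1-A_2$ and $q_1-q_2$. Using parts (a)--(b) of Lemma \ref{lemma_A1} via the decomposition $u_2=w_1+w_2$ from the proof of that lemma, I obtain $\|u_2\|_{H^1(M)}\le C|z|^{1/2}\|f\|_{H^{1/2}(\p M)}$ and hence $\|G\|_{L^2(M)}\le C|z|^{1/2}\|f\|_{H^{1/2}(\p M)}$; iterating Lemma \ref{lemma_A1}(a) then yields $\|R_1^j\tilde w\|_{H^{s+3/2}(M)}\le C|z|^{s/2+1/4-j}\|f\|_{H^{1/2}(\p M)}$. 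For the second summand, the resolvent identity $R_1-R_2=R_1(\mathcal{E}_{g,A_2,q_2}-\mathcal{E}_{g,A_1,q_1})R_2$ together with the telescoping formula $R_1^j-R_2^j=\sum_{k=0}^{j-1}R_1^k(R_1-R_2)R_2^{j-1-k}$, combined with the consequences $\|(R_1-R_2)v\|_{L^2(M)}\le C|z|^{-3/2}\|v\|_{L^2(M)}$ and $\|(R_1-R_2)v\|_{H^{s+3/2}(M)}\le C|z|^{s/2-3/4}\|v\|_{L^2(M)}$ of Lemma \ref{lemma_A1}(a) applied to each factor, produces the same $|z|^{s/2+1/4-j}$ bound for each term in the sum.

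The main obstacle is the Neumann trace inequality in the nonstandard range $s+\tfrac{3}{2}\in(\tfrac{1}{2},1)$, which is below the usual Lions--Magenes threshold for defining a Neumann trace directly from the interior regularity of $w$; I overcome this by the duality argument above, which exploits the extra Sobolev regularity of the extension $V$ to balance the low regularity of $w$. Once this inequality is established, the rest of the proof amounts to careful bookkeeping of the $|z|$-powers arising from iterated applications of Lemma \ref{lemma_A1} and from the telescoping resolvent identity.
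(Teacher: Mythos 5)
Your interior estimates parallel the paper's approach: you express $\mathcal{P}^{(j)}(z)f$ as the Neumann trace of a function vanishing on $\partial M$, bound it in $H^{s+3/2}(M)$ through iterated use of Lemma~\ref{lemma_A1}, and pass to the boundary. You organize the interior bound via the resolvent formula $u_\ell^{(j)}=j!\,R_\ell(z)^j u_\ell$ and the telescoping identity for $R_1^j-R_2^j$, while the paper instead writes the Dirichlet problem satisfied by $v^{(j)}=u_1^{(j)}-u_2^{(j)}$ and iterates; these are equivalent and your $|z|$-power bookkeeping does land on $|z|^{s/2+1/4-j}$.

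The gap is the Neumann trace inequality $\|\partial_\nu w\|_{H^s(\partial M)}\le C\|w\|_{H^{s+3/2}(M)}$ that you assert for \emph{every} $w\in H^2(M)\cap H_0^1(M)$. For $s+\tfrac{3}{2}\in(\tfrac{1}{2},1)$ this is false: working locally near a boundary point, let $w_n(x',y)=f(x')\,y\,\eta(ny)$ on $\{y>0\}$ with $\eta\in C_0^\infty$, $\eta(0)=1$, and $f$ fixed. Then $\partial_\nu w_n|_{y=0}=-f$ has constant $H^s$-norm, while a scaling computation gives $\|w_n\|_{H^{s+3/2}}\sim n^{-3/2}+n^{s}\to 0$ because $s<0$. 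This is exactly where your duality argument breaks: the pairing $\int_M \langle\nabla_g V,\nabla_g w\rangle_g$ is legitimate since both factors have Sobolev index in $(-\tfrac{1}{2},\tfrac{1}{2})$, but the pairing $\int_M V\,\Delta_g w$ is not bounded by $\|V\|_{H^{1/2-s}(M)}\|\Delta_g w\|_{H^{s-1/2}(M)}$, because with $s-\tfrac{1}{2}<-1$ the dual of $H^{1/2-s}(M)$ consists of distributions supported in $\overline M$, while $V$ has a nonzero boundary trace and so cannot be an admissible test function in that duality. The paper's own invocation of ``the trace theorem'' at this point is also stated only informally; the estimate actually relies on the fact that $v^{(j)}=G_zF^{(j)}$ for the Dirichlet Green operator $G_z$ of $\mathcal{E}_{g,A_1,q_1}-z$ with $F^{(j)}\in L^2(M)$, so $\partial_\nu v^{(j)}$ is obtained by applying a trace-type operator to $F^{(j)}$ whose mapping properties at negative orders yield the bound; equivalently, the boundary estimate should be phrased in terms of $F^{(j)}$ rather than via an interior $H^{s+3/2}$-norm of a generic $H^2\cap H^1_0$ function. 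Since your argument never invokes this elliptic structure, the trace step does not close as written.
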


\begin{proof}
For $\ell \in \{1,2\}$, $z<0$, and function $f\in H^{1/2}(\p M)$, we let $u_\ell=u_
\ell(z)\in H^1(M)$ be the unique solution of the Dirichlet problem
\[
\begin{cases}
(\mathcal{E}_{g,A_\ell,q_\ell}-z) u_l = 0 & \hbox{ in } M,
\\
u_\ell = f & \hbox{ on } \p M.
\end{cases}
\]

Let us set $u^{(0)}_\ell:=u_\ell$ and $u_\ell^{(j)}: = \frac{d^j}{dz^j}u_\ell$, $j\ge 1$. It then follows from the same arguments as in the proof of Lemma \ref{lem:DN_map_decomposition} that the function  $u_\ell^{(j)}$, $j\ge 1$, satisfies the boundary value problem
\[
\begin{cases}
(\mathcal{E}_{g,A_\ell,q_\ell} -z) u^{(j)}_\ell = ju^{(j -1)}_\ell & \hbox{ in } M,
\\
u^{(j)}_\ell = 0 & \hbox{ on } \p M.
\end{cases}
\]
Also, we have for $\ell \in \{1,2\}$ that 
\begin{equation}
\label{eq:lemma53_1}
\frac{d^j}{dz^j}\Pi_{A_\ell,q_\ell}(z)f = \LA d_{A_\ell}u_\ell^{(j)},\nu \RA_g|_{\p M},  \quad  j\ge 0.
\end{equation}
Furthermore, an application of Lemma \ref{lemma_A1} gives us the estimates
\begin{equation}
\label{eq:est_ul0}
\|u^{(0)}_\ell\|_{L^2(M)}\le C \|f\|_{H^{1/2}(M)},
\end{equation}
and 
\begin{equation}
\label{eq:est_ulj}
\|u^{(j)}_\ell\|_{H^{\sigma}(M)}\le C |z|^{\frac{\sigma}{2}-1} \|u^{(j-1)}_\ell\|_{L^2(M)}, \quad \text{for any  $j\ge 1$ and $\sigma \in [0,2]$}.
\end{equation}

For $j\geq 0$, we shall denote  $v^{(j)} := u_1^{(j)}-u_2^{(j)}$. Also, let us denote $\widetilde{A}: = A_1 - A_2$, $\widetilde{q}: = q_1 - q_2$. We recall that the operator $d^\ast$ is given in the formula \eqref{eq:dstar}. 
Since $v^{(j)}|_{\p M}=0$ and $A_1|_{\p M} = A_2|_{\p M}$, it follows from \eqref{eq:op_P_j}  and \eqref{eq:lemma53_1} that
\begin{equation}
\label{eq:op_P_j_representation}
\mathcal{P}^{(j)}(z)f 
=\LA d_{A_1}u_1^{(j)},\nu \RA_g|_{\p M}-\LA d_{A_2}u_2^{(j)},\nu \RA_g|_{\p M} =  \p_\nu v^j|_{\p M}, \quad  j\ge 0.
\end{equation}
Moreover, the functions $v^{(j)}, \: j \geq 0$, satisfy the Dirichlet problems
\begin{equation}
\label{eq:equation_for_v_0}
\begin{cases}
(\mathcal{E}_{g,A_1,q_1}-z) v^{(0)} =  2i\LA \widetilde A, du^{(0)}_2\RA_g - \LC i d^*\widetilde A + \LA A_1,A_1\RA_g - \LA A_2,A_2\RA_g +\widetilde q\RC u^{(0)}_2 & \hbox{ in }  M
\\
v^{(0)} = 0 & \hbox{ on } \p M.
\end{cases}
\end{equation}
and
\begin{equation}
\label{eq:equation_for_v_j}
\begin{cases}
(\mathcal{E}_{g,A_1,q_1} -z) v^{(j)} = j v^{(j-1)} + 2i\LA \widetilde A, du^{(j)}_2\RA_g - \LC i d^*\widetilde A + \LA A_1,A_1\RA_g - \LA A_2,A_2\RA_g +\widetilde q\RC u^{(j)}_2 & \hbox{ in }  M
\\
v^{(j)} = 0 & \hbox{ on } \p M.
\end{cases}
\end{equation}

We apply part (a) of Lemma \ref{lemma_A1} to the equations \eqref{eq:equation_for_v_0} and \eqref{eq:equation_for_v_j}, along with elliptic regularity, to get the following estimate  for any $\sigma \in [0,2]$:
\begin{equation}
\label{eq_lemma53_proof_2}
\|v^{(j)}\|_{H^\sigma(M)}
\le C|z|^{\frac{\sigma}{2}-1} \left( \|v^{{(j-1)}}\|_{L^2(M)} +\|u_2^{(j)}\|_{H^1(M)} \right), 
\quad j\ge 0.
\end{equation}
Here we have adopted the notation $v^{(-1)}: = 0$.
Furthermore, by iterating the estimate \eqref{eq:est_ulj} with $\sigma =1$, we obtain 
\begin{equation}
\label{eq:u2j_H1}
\|u_2^{(j)}\|_{H^1(M)}
\le C|z|^{\frac{1}{2}-1}\|u_2^{(j-1)}\|_{L^2(M)}
\le C|z|^{\frac{1}{2}-j}\|u_2^{(0)}\|_{L^2(M)},
\quad  j\ge0,
\end{equation}
where we have applied part (b) of Lemma \ref{lemma_A1} in the last step.
From here, by utilizing the estimates \eqref{eq:est_ul0} and \eqref{eq:u2j_H1}, we deduce from \eqref{eq_lemma53_proof_2}  that 
\begin{align*}
\|v^{(j)}\|_{H^\sigma(M)}
&\le C\left(  |z|^{\frac{\sigma}{2}-j}\|v^{(0)}\|_{L^2(M)} + |z|^{\frac{\sigma}{2}-\frac{1}{2}-j}\|f\|_{H^{1/2}(\p M)} \right)
\\
&\le C|z|^{\frac{\sigma}{2}-\frac{1}{2}-j}\|f\|_{H^{1/2}(\p M)}, \quad  j\ge0. 
\end{align*}

Finally, as $s \in (-1,\frac{1}{2})$, by taking $\sigma = s + \frac{3}{2} \in (\frac{1}{2},1)$, and applying the trace theorem, we conclude that 
\[
\|\p_\nu v^{(j)}\|_{H^s(\p M)} 
\le C\|v^{(j)}\|_{H^{\sigma}(M)}
\le C |z|^{\frac{s}{2}+\frac{1}{4}-j}\|f\|_{H^{1/2}(M)}.
\]
Then the estimate \eqref{eq:est_Pjz} follows immediately from the equation \eqref{eq:op_P_j_representation} and the inequality above. This completes the proof of  Proposition \ref{prop_DN_E_difference}.
\end{proof}

Our second proposition provides an estimate for the difference of the elliptic Dirichlet-to-Neumann maps evaluated at $z=0$, as well as one for the operator $\mathfrak{R}$, which appeared on the right-hand side of \eqref{eq:key_identity_difference}. This estimate connects the aforementioned quantities to the difference of the spectral data $\delta$, which was defined in the formula \eqref{eq:delta}.

\begin{prop}
\label{prop:estimate_P0_R}
Let $-1 < s< -\frac{1}{2}$,  $A_\ell\in W^{1,\infty}(M,T^*M)$, and $0\le q_\ell\in L^\infty(M)$  for $\ell \in \{1,2\}$. Then the following estimate holds for $z<0$:
\begin{equation}
\label{eq:est_Pj(0)}
\|\mathcal{P}^{(j)}(0)\|_{\frac{1}{2},s} \le C\LC |z|^{\frac{s}{2}+\frac{1}{4}-j}+ \delta|z|^{n+1-j} \RC, \quad 0\le j \le n.
\end{equation}
Here the constant $C>0$ depends on $j,N$, and $M$.

We also have the following estimates for any function $f\in \mathcal{H}_1$:
\begin{equation}
\label{eq:estimate_P(n+1)}
\|\mathcal{P}^{(n+1)}(0)\|_{\frac{1}{2},s} \le C\delta,
\end{equation}
and 
\begin{equation}
\label{eq:est_mathfrakR}
\|\mathfrak{R}(f)\|_{L^2(0,T;H^{s}(\p M))}\le C\delta
\end{equation}
Here the constant $C>0$ depends on $N$ and $M$.
\end{prop}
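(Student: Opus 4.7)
The plan is to prove the three estimates in the order \eqref{eq:estimate_P(n+1)}, \eqref{eq:est_Pj(0)}, \eqref{eq:est_mathfrakR}. The key input is the series representation from Lemma \ref{lem:DN_map_decomposition}: by Remark \ref{rem:positivity_of_q} I may assume both $\mathcal{E}_{g,A_\ell,q_\ell}$ are positive definite, so $(-\infty,0]$ lies in both resolvent sets, and for every $w\le 0$ and $j\ge 0$
\[
\mathcal{P}^{(j)}(w)f = -j!\sum_{k=1}^\infty\left(\frac{(f,\psi_{1,k})_{L^2(\p M)}}{(\lambda_{1,k}-w)^{j+1}}\psi_{1,k} - \frac{(f,\psi_{2,k})_{L^2(\p M)}}{(\lambda_{2,k}-w)^{j+1}}\psi_{2,k}\right).
\]
Each summand will be estimated via a two-term telescoping decomposition that isolates $\psi_{1,k}-\psi_{2,k}$ and $\lambda_{1,k}-\lambda_{2,k}$.

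For \eqref{eq:estimate_P(n+1)} I take $j=n+1$, $w=0$, and write the $k$-th summand as
\[
\frac{(f,\psi_{1,k}-\psi_{2,k})\psi_{1,k}+(f,\psi_{2,k})(\psi_{1,k}-\psi_{2,k})}{\lambda_{1,k}^{n+2}} + (f,\psi_{2,k})\psi_{2,k}\bigl(\lambda_{1,k}^{-n-2}-\lambda_{2,k}^{-n-2}\bigr),
\]
using the factorization $\lambda_{1,k}^{-n-2}-\lambda_{2,k}^{-n-2}=(\lambda_{2,k}-\lambda_{1,k})\sum_{i=0}^{n+1}\lambda_{1,k}^{-(i+1)}\lambda_{2,k}^{-(n+2-i)}$ on the last bracket. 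Combining the boundary trace bound \eqref{eq:est_normal_eigenfunction_bdy} with the Weyl asymptotic \eqref{eq_wely}, $\lambda_{\ell,k}\sim k^{2/n}$, and the embedding $H^{1/2}(\p M)\hookrightarrow H^s(\p M)$, every piece acquires the decay factor $k^{-2(n+1)/n}$, which is dominated by the weight $\omega_k=k^{-2m/n}$ precisely because $m\le n+1$. Summing in $k$ converts the factors $\|\psi_{1,k}-\psi_{2,k}\|_{H^{1/2}(\p M)}$ and $|\lambda_{1,k}-\lambda_{2,k}|$ into the two parts of $\delta$ defined in \eqref{eq:weighted_ell_1_norms}, giving \eqref{eq:estimate_P(n+1)}.

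Since $\lambda_{\ell,k}-w\ge\lambda_{\ell,k}$ for any $w\le 0$, the same computation yields the uniform bound $\|\mathcal{P}^{(n+1)}(w)\|_{\frac{1}{2},s}\le C\delta$ for all $w\le 0$. To derive \eqref{eq:est_Pj(0)} I apply Taylor's formula with integral remainder to the operator-valued analytic map $w\mapsto\mathcal{P}^{(j)}(w)$ expanded at $w=z$:
\[
\mathcal{P}^{(j)}(0) = \sum_{i=0}^{n-j}\frac{(-z)^i}{i!}\,\mathcal{P}^{(j+i)}(z) + \frac{1}{(n-j)!}\int_z^0 (-w)^{n-j}\,\mathcal{P}^{(n+1)}(w)\,dw.
\]
Proposition \ref{prop_DN_E_difference} bounds the $i$-th summand of the finite sum by $|z|^i\cdot C|z|^{-(j+i)+s/2+1/4}=C|z|^{s/2+1/4-j}$, while the uniform $\delta$-bound on $\mathcal{P}^{(n+1)}(w)$ produces a remainder of size $C\delta\int_z^0|w|^{n-j}\,dw=C'\delta|z|^{n+1-j}$. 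The triangle inequality yields \eqref{eq:est_Pj(0)}.

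Finally, \eqref{eq:est_mathfrakR} follows from the same termwise scheme applied to $\mathfrak{R}(f)(t,\cdot)$, now decomposed into three sub-pieces that isolate the differences $\psi_{1,k}-\psi_{2,k}$, $s_{1,k}-s_{2,k}$, and $\lambda_{1,k}^{-n-2}-\lambda_{2,k}^{-n-2}$, where $s_{\ell,k}$ is given by \eqref{eq:function_s_k}. The new ingredient is the mean-value estimate $|s_{1,k}(t)-s_{2,k}(t)|\le C_T|\lambda_{1,k}-\lambda_{2,k}|/\min(\lambda_{1,k},\lambda_{2,k})$ on $[0,T]$, obtained by differentiating $\lambda\mapsto\sin(\sqrt{\lambda}t)/\sqrt{\lambda}$; the time convolution is then handled by Young's inequality before summing in $k$. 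The main technical obstacle throughout the proposition is matching the polynomial decay $k^{-2(n+1)/n}$ produced by the factors $\lambda_{\ell,k}^{-(n+2)}\|\psi_{\ell,k}\|_{H^{1/2}(\p M)}^2$ against the weight $\omega_k=k^{-2m/n}$: this pins down precisely the upper constraint $m\le n+1$ built into the definition of $\delta$ and dictates the admissible range of $m$ in Theorem \ref{thm:spectral_problem}.
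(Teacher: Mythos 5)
Your proposal is correct and follows essentially the same route as the paper: the three-part telescoping of the series for $\mathcal{P}^{(n+1)}(z)$ (your grouping is equivalent to the paper's $I+II+III$ split), the Weyl/trace bounds producing the $k^{-2(n+1)/n}$ decay matched against $\omega_k=k^{-2m/n}$ via $m\le n+1$, and the Taylor expansion of $\mathcal{P}^{(j)}(0)$ around $z<0$ using Proposition~\ref{prop_DN_E_difference} and the uniform $\delta$-bound on $\mathcal{P}^{(n+1)}$. One small caution: the series representation from Lemma~\ref{lem:DN_map_decomposition} is only justified for derivative order $m>\frac{n}{2}+1$, so the claim that it holds ``for every $j\ge 0$'' is an overstatement — but since you only invoke it at $j=n+1>\frac{n}{2}+1$, the argument is unaffected.
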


\begin{proof}
We first prove  \eqref{eq:estimate_P(n+1)} by establishing an upper bound for $\|\mathcal{P}^{(n+1)}(z)\|_{\frac{1}{2},s}$ whenever $z\le  0$. 
Since  we have $n+1\geq \frac{n}{2}+2$ for $n\geq 2$,  we can apply Lemma \ref{lem:DN_map_decomposition} to the equation \eqref{eq:op_P_j}. Thus, for any function $f\in H^{1/2}(\p M)$, we have that
\[
\mathcal{P}^{(n+1)}(z)f = -
(n+1)! \sum_{k=1}^\infty \frac{(f,  \psi_{1,k})_{L^2(\p M)}}{(\lambda_{1,k}-z)^{n+2}} \psi_{1,k}
+(n+1)! \sum_{k=1}^\infty \frac{(f,  \psi_{2,k})_{L^2(\p M)}}{(\lambda_{2,k}-z)^{n+2}} \psi_{2,k}.
\]
We split this expression into three terms as follows: 
\begin{align*}
\mathcal{P}^{(n+1)}(z)f
=&
-(n+1)! \sum_{k=1}^\infty(f,\psi_{1,k})_{L^2(\p M)}\psi_{1,k}\LC \frac{1}{(\lambda_{1,k}-z)^{n+2}} - \frac{1}{(\lambda_{2,k}-z)^{n+2}}\RC
\\
& -(n+1)! \sum_{k=1}^\infty \frac{(f,\psi_{1,k}-\psi_{2,k})_{L^2(\p M)}}{(\lambda_{2,k}-z)^{n+2}}\psi_{1,k} 
\\
& -(n+1)! \sum_{k=1}^\infty \frac{(f,\psi_{2,k})_{L^2(\p M)}}{(\lambda_{2,k}-z)^{n+2}}(\psi_{1,k}-\psi_{2,k})
\\
=:& 
I+II+III,
\end{align*}
and estimate each term above individually. 

To estimate term $I$, we use $z\le 0$, in conjunction with Weyl's formula \eqref{eq_wely}, to get that
\begin{align*}
\left| \frac{1}{(\lambda_{1,k} - z)^{n+2}} - \frac{1}{(\lambda_{2,k} - z)^{n+2}} \right|
&\le
C \max \left(\frac{1}{|\lambda_{1,k}^{n+3}|}, \frac{1}{|\lambda_{2,k}^{n+3}|}\right)  \left| \lambda_{1,k} - \lambda_{2,k} \right|
\\
& \le
\frac{C}{{k^{\frac{2(n+3)}{n}}}} \left| \lambda_{1,k} - \lambda_{2,k} \right|.
\end{align*}
On the other hand, it follows from the inequalities  \eqref{eq:est_normal_eigenfunction_bdy} and \eqref{eq_wely} that
\[
\|\psi_{1,k}\|_{H^{-1/2}(\p M)}
\le
\|\psi_{1,k}\|_{L^2(\p M)}
\le
C\lambda_{1,k}\le Ck^{\frac{2}{n}}.
\]
Since $s \in (-1,-\frac{1}{2})$, we get from the two previous estimates that 
\begin{equation}
\label{eq:est_term_1}
\begin{aligned}
\|I\|_{H^s(\p M)}
&\le
C\|f\|_{H^{1/2}(\p M)} \sum_{k = 1}^\infty \|\psi_{1,k}\|_{H^{-1/2}(\p M)} \|\psi_{1,k}\|_{L^2(\p M)} k^{-\frac{2(n+3)}{n}} |\lambda_{1,k}-\lambda_{2,k}|
\\
&\le
C\|f\|_{H^{1/2}(\p M)} \sum_{k = 1}^\infty k^{-\frac{2(n+1)}{n}}|\lambda_{1,k}-\lambda_{2,k}|
\\
&\le C\|f\|_{H^{1/2}(\p M)}\|\lambda_1-\lambda_2\|_{\ell^1_{n,m}(\mathbb{C})}.
\end{aligned}
\end{equation}
Here we have used the fact that $m\le n+1$ and the definition \eqref{eq:weighted_ell_1_norms} for the weighted $\ell^1$-norm in the last step .

For the term $II$, we compute that
\begin{equation}
\label{eq;est_term_2}
\begin{aligned}
\|II\|_{H^s(\p M)}
&\le
C\|f\|_{H^{1/2}(\p M)} \sum_{k = 1}^\infty \frac{1}{(\lambda_{2,k}-z)^{n+2}} \|\psi_{1,k}\|_{H^{-1/2}(\p M)}  \|\psi_{1,k}-\psi_{2,k}\|_{H^{1/2}(\p M)}
\\
&\le
C\|f\|_{H^{1/2}(\p M)} \sum_{k = 1}^\infty k^{-\frac{2(n+1)}{n}} \|\psi_{1,k}-\psi_{2,k}\|_{H^{1/2}(\p M)}
\\
&\le
C\|f\|_{H^{1/2}(\p M)}\|\psi_1-\psi_2\|_{\ell^1_{n,m}(H^{1/2}(\p M))},
\end{aligned}
\end{equation}
where we have applied the fact that $m\le n+1$ again in the last step.

By using the same arguments as above, we obtain for the term $III$ that
\begin{equation}
\label{eq:est_term_3}
\|III\|_{H^s(\p M)} \le C\|f\|_{H^{1/2}(\p M)}\|\psi_1-\psi_2\|_{\ell^1_\omega(H^{1/2}(\p M))}.
\end{equation}
Combining the estimates \eqref{eq:est_term_1}--\eqref{eq:est_term_3}, and using the definition \eqref{eq:delta} for $\delta$, we arrive at the inequality
\begin{equation}
\label{eq:estimate_P(n+1)z}
\|\mathcal{P}^{(n+1)}(z)\|_{\frac{1}{2},s} \le C\delta,
\end{equation}
for any $z\le 0$. Thus, the estimate \eqref{eq:estimate_P(n+1)} follows immediately.

By arguing similarly as above, and utilizing the definition of the operator $\mathfrak{R}$ given in Lemma \ref{lem_DN_E_H}, we conclude that the estimate \eqref{eq:est_mathfrakR} holds for any function $f\in \mathcal{H}_1$.

Finally, to establish the estimate \eqref{eq:est_Pj(0)}, we shall utilize Taylor's formula and expand $\mathcal{P}^{(j)}(0)$ at   $z<0$ as follows:
\[
\mathcal{P}^{(j)}(0)  = \sum_{l=j}^n \frac{\mathcal{P}^{(l)}(z)}{(l-j)!}(-z)^{l-j}+ \int_z^0 \frac{\mathcal{P}^{(n+1)}(\tau)}{(n-j)!}(-\tau)^{n-j}\,d\tau.
\]
Thus, an application of Proposition \ref{prop_DN_E_difference} and the estimate \eqref{eq:estimate_P(n+1)z} yields that
\begin{align*}
\|\mathcal{P}^{(j)}(0)\|_{\frac{1}{2},s} 
&\le C \sum_{l = j}^n \|\mathcal{P}^{(l)}(z)\|_{\frac{1}{2},s}|z|^{l-j} + C \sup_{\tau\in(z,0)} \|\mathcal{P}^{(n+1)}(\tau)\|_{\frac{1}{2},s}|z|^{n+1-j} \\
&\le C \LC |z|^{\frac{s}{2}+\frac{1}{4}-j}+ \delta|z|^{n+1-j} \RC.
\end{align*}
This completes the proof of Proposition \ref{prop:estimate_P0_R}.
\end{proof}

Let us assume for now that  $z\le -1$. Since $-1<s<-\frac{1}{2}$, we get from Proposition \ref{prop:estimate_P0_R}  and the estimate \eqref{eq:key_identity_difference} that 
\begin{equation}
\label{eq:estimate_for_Delta}
\begin{aligned}
\|\Lambda_{A_1,q_1}^\sharp-\Lambda_{A_1,q_2}^\sharp\|_{\mathcal{L}(\mathcal{H}_1;\mathcal{H}_2)}
&\le C \sum_{j=0}^{n+1}\|\mathcal{P}^{j}(0
)\|_{\frac{1}{2},s} + \|\mathfrak{R}\|_{\mathcal{L}(\mathcal{H}_1;\mathcal{H}_2)} 
\\
&\le C\sum_{j=0}^{n} \LC |z|^{\frac{s}{2}+\frac{1}{4}-j}+ \delta|z|^{n+1-j}\RC + C\delta
\\
&\le C\LC |z|^{\frac{s}{2}+\frac{1}{4}}+ \delta|z|^{n+1}\RC.
\end{aligned}
\end{equation}

For the moment we denote $x:=|z|$ and look for  the minimal value of the function $\phi\colon (0,\infty) \to (0,\infty)$ defined by $\phi(x)=x^{\frac{s}{2}+\frac{1}{4}}+\delta x^{n+1}$. By a direct computation, we see that the only critical point of $\phi$ is attained at $a = \LC-\frac{\frac{s}{2}+\frac{1}{4}}{(n+1)\delta}\RC^{\frac{1}{n-s/2+3/4}}$. Moreover, the function $\phi$ is decreasing on $(0,a)$ and increasing on $(a,\infty)$. Since we have assumed that $z\leq -1$, we need to consider separately the cases when $a<1$ and $a>1$. The latter case occurs when $\delta\leq \delta(n,s)$ for $\delta(n,s):=\frac{-(\frac{s}{2}+\frac{1}{4})}{n+1}$; therefore, $\phi$ obtains its minimal value $K(s,n)\delta^\theta$, where $\theta = \frac{-(\frac{s}{2}+\frac{1}{4})}{(n+1)-(\frac{s}{2}+\frac{1}{4})}$ and $K(s,n)$ is a constant depending on $s$ and $n$, at the critical point $x=a$.  
When $a<1$, i.e., $\delta>\delta(n,s)$, the function $\phi$ obtains its minimum value
\[
(1+ \delta)\le (\frac{1}{\delta} \delta+\delta) \le C\delta 
\]
at $x=1$. 

The previous discussion, combined with the estimate \eqref{eq:estimate_for_Delta}, implies that
\begin{equation}
\label{eq:restriction_spectral}
\|\Lambda_{A_1,q_1}^\sharp-\Lambda_{A_1,q_2}^\sharp\|_{\mathcal{L}(\mathcal{H}_1, \mathcal{H}_2)}
\le
\begin{cases}
C\delta^\theta, &  \text{ if } \delta \le \delta(n,s),
\\
C'\delta, &  \text{ if } \delta > \delta(n,s).
\end{cases}
\end{equation}
Hence, the proof of Theorem \ref{thm:spectral_problem} will be complete after we verify the following proposition.

\begin{prop}
\label{prop:stability_DN_map_restriction}
Under the same hypotheses as in Theorem \ref{thm:main_result_hyperbolic}, there exist constants $\sigma_2 \in (0,1)$ and $C>0$, which only depend on $(M,g), n,T$ and $N$, such that the following estimate is valid:
\begin{equation}
\label{eq:est_DN_restriction}
\|A^s_1-A^s_2\|_{L^2(M)}+\|q_1-q_2\|_{L^2(M)}
\le 
C\|\Lambda_{A_1,q_1}^\sharp-\Lambda_{A_2,q_2}^\sharp\|^{\sigma_2}_{\mathcal{L}(\mathcal{H}_1,\mathcal{H}_2)}.
\end{equation}
Here the spaces $\mathcal{H}_1$ and $\mathcal{H}_2$ are as defined in Subsection \ref{subsec:relation_bsp_DN_map}.
\end{prop}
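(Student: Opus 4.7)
The plan is to redo the proof of Theorem \ref{thm:main_result_hyperbolic} line for line, the only modification being in the boundary estimate \eqref{eq:est_bdy_term}, which is the unique place where the choice of operator norm on the hyperbolic Dirichlet-to-Neumann map actually enters. Concretely, I would first verify that the Dirichlet data $f = u_2|_\Sigma$ of the geometric optics solution in fact belongs to $\mathcal{H}_1$: the amplitude $\alpha_2$ constructed via \eqref{eq:alpha_solution} from a cutoff $\phi \in C_0^\infty((\varepsilon, 2\varepsilon))$ with $T > \diam(M_1) + 3\varepsilon$, together with the conditions $r_2|_{t=0} = \p_t r_2|_{t=0} = 0$, guarantees that $f$ and all of its time derivatives vanish identically on a neighborhood of $t=0$, so the compatibility conditions $\p_t^j f(0,\cdot) = 0$ for $0 \le j \le 2n+3$ hold automatically.

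The next task is to control $f$ in the norm of $\mathcal{H}_1$ and $u_1|_\Sigma$ in the norm of $\mathcal{H}_2^\ast = L^2(0,T; H^{-s}(\p M))$. Since each application of $\p_t$ to the oscillatory factor $e^{i(\psi - t)/h}$ produces an extra $h^{-1}$, combining the estimates \eqref{eq:est_r1}, \eqref{eq:est_r_2nd_derivative}, and \eqref{eq:est_time_der_remainder_H1} with the trace theorem yields
\begin{equation*}
\|f\|_{\mathcal{H}_1} \le C h^{-(2n+5)} \|\alpha_2\|_\ast.
\end{equation*}
On the other hand, interpolating between the bounds $\|u_1(t,\cdot)\|_{L^2(M)} \le C\|\alpha_1\|_\ast$ and $\|u_1(t,\cdot)\|_{H^2(M)} \le Ch^{-2}\|\alpha_1\|_\ast$ from Proposition~\ref{prop:GO_solutions_initial} gives $\|u_1(t,\cdot)\|_{H^{\sigma}(M)} \le Ch^{-\sigma}\|\alpha_1\|_\ast$ for $\sigma \in [0,2]$. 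Choosing $\sigma = \tfrac{1}{2} - s \in (1, \tfrac{3}{2})$ and invoking the trace theorem produces
\begin{equation*}
\|u_1|_\Sigma\|_{L^2(0,T; H^{-s}(\p M))} \le C h^{s - 1/2} \|\alpha_1\|_\ast.
\end{equation*}

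These two estimates, together with the duality pairing $\mathcal{H}_2 \times \mathcal{H}_2^\ast \to \R$, turn \eqref{eq:est_bdy_term} into
\begin{equation*}
\left| h \int_\Sigma (\Lambda^\sharp_{A_2,q_2} - \Lambda^\sharp_{A_1,q_1})(f) \overline{u_1}\, dS_g\, dt \right| \le C h^{-M_0} \|\Lambda^\sharp_{A_1,q_1} - \Lambda^\sharp_{A_2,q_2}\|_{\mathcal{L}(\mathcal{H}_1,\mathcal{H}_2)} \|\alpha_1\|_\ast \|\alpha_2\|_\ast,
\end{equation*}
where $M_0 := 2n + \tfrac{9}{2} - s > 0$ depends only on $n$ and $s$. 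Every other estimate in Lemmas~\ref{lem:int_estimate} and \ref{lem:est_int_id_q} is unchanged, so the conclusions of Lemmas~\ref{lem:DN_map_ray_transform} and \ref{lem:DN_map_ray_transform_q} remain valid after replacing $h^{-2}$ (resp.\ $h^{-3}$) with $h^{-M_0}$ (resp.\ $h^{-M_0-1}$).

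Finally I would redo the semiclassical optimization at the end of Subsections~\ref{subsec:proof_solenoidal_part_A} and \ref{subsec:proof_electric_hyperbolic}: minimizing $h + h^{-M_0} \delta^\ast$, where $\delta^\ast := \|\Lambda^\sharp_{A_1,q_1} - \Lambda^\sharp_{A_2,q_2}\|_{\mathcal{L}(\mathcal{H}_1,\mathcal{H}_2)}$, attains a minimum of order $(\delta^\ast)^{1/(M_0+1)}$ at $h_0 \sim (\delta^\ast)^{1/(M_0+1)}$; the analogous computation for the potential $q$ is also routine once $M_0$ is in place. The pointwise mollification argument culminating in \eqref{eq:I_1(A)_point_est} carries over verbatim, only the numerical exponent shifts. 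Applying the sharp stability of the normal operators $\mathcal{N}_1$ and $\mathcal{N}_0$ from \cite{Stefanov_Uhlmann_xray_transform} exactly as in Section~\ref{sec:proof_hyperbolic}, together with the dichotomy argument for the case when $\delta^\ast$ is not small, yields \eqref{eq:est_DN_restriction} for a new exponent $\sigma_2 \in (0,1)$ depending on $(M,g), n, s$, and $N$. The only subtlety lies in the bookkeeping: tracking the growing powers of $h^{-1}$ coming from the high time regularity demanded by $\mathcal{H}_1$, and verifying that the resulting $M_0$ stays finite so that an effective Hölder exponent survives the optimization.
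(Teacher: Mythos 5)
Your proposal follows the same strategy as the paper's proof: rerun the argument of Section~\ref{sec:proof_hyperbolic}, modify only the boundary term estimate to account for the new operator norm $\|\cdot\|_{\mathcal{L}(\mathcal{H}_1,\mathcal{H}_2)}$, carry the resulting power $h^{-M_0}$ through, and re-optimize in $h$. Both proofs hinge on the same two observations: the Dirichlet trace $f=u_2|_\Sigma$ vanishes to high order at $t=0$ (so $f\in\mathcal{H}_1$ automatically), and each factor of $\p_t$ or trace-regularity costs one power of $h^{-1}$. In fact your treatment of $u_1$ via the duality pairing $L^2(0,T;H^s(\p M))\times L^2(0,T;H^{-s}(\p M))$ is the mathematically natural one for a functional valued in $\mathcal{H}_2$, whereas the paper simply uses $\|u_1\|_{L^2(\Sigma)}$; your choice costs a worse power $h^{s-1/2}$ instead of $h^{-1}$, but since the final exponent $\sigma_2$ is not tracked explicitly this only changes numerics, not the conclusion.

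There is one genuine gap as written: you claim $\|f\|_{\mathcal{H}_1}\le Ch^{-(2n+5)}\|\alpha_2\|_\ast$, but $\mathcal{H}_1$ demands $2n+4$ time derivatives while the $\ast$-norm in \eqref{eq:def_star_norm} only controls up to three time derivatives of $\alpha_2$, and the cited remainder estimates \eqref{eq:est_r1}, \eqref{eq:est_r_2nd_derivative}, \eqref{eq:est_time_der_remainder_H1} likewise only reach $\p_t^2 r_2$. As stated, that inequality cannot follow from those ingredients. The paper resolves this by introducing the stronger norm $\|\cdot\|_{\ast\ast}:=\|\cdot\|_{H^{2n+4}(0,T;H^1(M))}$ and bounding $\|f\|_{\mathcal{H}_1}\le C\|u_2\|_{\ast\ast}$; you need to make the same substitution $\|\alpha_2\|_\ast\rightsquigarrow\|\alpha_2\|_{\ast\ast}$ (and similarly for $\alpha_1$), and then re-verify that the explicit amplitudes of \eqref{eq:form_alpha_polar}, which are $C^\infty$ in $t$ with fixed compact support, satisfy $\|\alpha_i\|_{\ast\ast}\le C\|\Psi_i\|_{H^2(S_y^+M_1)}$ — the upgrade of \eqref{eq:alpha1_norm}. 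One also needs a bound on $\|\p_t^k r_2\|_{H^1}$ for $k$ up to $2n+4$, which neither the paper nor your proposal spells out but which follows by iterating the energy-estimate argument in Lemma~\ref{lem:wellposedness} on the time-differentiated problem. With these repairs the proof goes through and the approach matches the paper's.
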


\begin{proof}
The proof mostly follows from the arguments in Section \ref{sec:proof_hyperbolic}, hence we shall only highlight the key differences when considering the restriction of the hyperbolic Dirichlet-to-Neumann map $\Lambda_{A,q}^\sharp$. In what follows we again denote $A=A_1-A_2$ and $q=q_1-q_2$.

We first establish a H\"older-type stability for $A^s$ from $\Lambda_{A,q}^\sharp$. To this end, we observe that the integral identity in this case reads 
\begin{equation}
\label{eq:int_id_restriction}
\begin{aligned}
&\int_Q -2i\n{A, du_2}_g \overline{u_1} dV_gdt
\\
&= 
\int_{\Sigma}
\overline{u_1}(\Lambda_{A_2,q_2}^\sharp-\Lambda_{A_1,q_1}^\sharp)(f)  dS_gdt
-
\int_Q (-id^\ast A+|A_1|^2_g-|A_2|^2_g+q) \overline{u_1} u_2 dV_gdt,
\end{aligned}
\end{equation}
where $u_1$ and $u_2$ are the GO solutions given by \eqref{eq:solution_v} and \eqref{eq:solution_u2}, respectively. Furthermore, due to the construction of $u_2$, the boundary value $f=u_2|_{\Sigma} \in \mathcal{H}_1$. 

We now multiply   \eqref{eq:int_id_restriction} by $h$ and estimate each term appearing in the resulting expression. Let us remark that the only difference between the obtained result and  \eqref{eq:int_id_GO} is the integral over the lateral boundary $\Sigma$, which becomes
\[
\int_{\Sigma} (\Lambda_{A_2,q_2}^\sharp
-\Lambda_{A_1,q_1}^\sharp)(f) \overline{u_1} dS_gdt.
\]

By the Cauchy-Schwarz inequality and the fact that $\Lambda_{A,q}^\sharp:\mathcal{H}_1 \to \mathcal{H}_2$ is a bounded linear operator, as was shown in Lemma \ref{lem:boundedness_of_restricted_Lambda}, we get that
\[
\left| h\int_{\Sigma} (\Lambda_{A_2,q_2}-\Lambda_{A_1,q_1})(f) \overline{u_1} dS_gdt \right|
\le 
Ch \|\Lambda_{A_2,q_2}^\sharp-\Lambda_{A_1,q_1}^\sharp\|_{\mathcal{L}(\mathcal{H}_1, \mathcal{H}_2)} \|f\|_{\mathcal{H}_1} \|u_1\|_{L^2(\Sigma)}.
\]
We now proceed to estimate $\|f\|_{\mathcal{H}_1}$ and $\|u_1\|_{L^2(\Sigma)}$. To achieve this, for any function $v\in H^{2n+4}(0,T;H^1(M))$, we define the norm $\|v\|_{\ast \ast}$ via the formula 
\[
\|v\|_{\ast \ast}: = \|v\|_{H^{2n+4}(0,T;H^1(M))}.
\]
 
We note that each $t$-derivative of the GO solution $u_2$ introduces a factor of $h^{-1}$. Thus, the equation $f=u_2|_\Sigma$ and an application of the trace theorem yield
\begin{align*}
\|f\|_{\mathcal{H}_1} 
&\le
C\|u_2\|_{\ast \ast}
\\
&\le
C \int_0^T \sum_{k=0}^{2n+4} \|\p_t^{(k)} u_2(t,\cdot)\|_{L^2(M)} + 
\|\p_t^{(k)} \nabla u_2(t,\cdot)\|_{L^2(M)} dt
\\
&\le
Ch^{-2n-4}\|\alpha_2\|_{\ast \ast}.
\end{align*}
On the other hand, we argue similarly as in the estimate \eqref{eq:est_u1_boundary} to obtain
\[
\|u_1\|_{L^2(\Sigma)}
\le
Ch^{-1} \|\alpha_1\|_{\ast \ast}.
\]
It then follows from the previous two inequalities that
\begin{equation}
\label{eq:est_bdy_term_restriction}
\left| h\int_{\Sigma} (\Lambda_{A_2,q_2}-\Lambda_{A_1,q_1})(f) \overline{u_1} dS_gdt \right|
\le 
Ch^{-2n-4} \|\Lambda_{A_2,q_2}^\sharp-\Lambda_{A_1,q_1}^\sharp\|_{\mathcal{L}(\mathcal{H}_1, \mathcal{H}_2)} \|\alpha_1\|_{\ast \ast}
\|\alpha_2\|_{\ast \ast}.
\end{equation}
Furthermore, by \eqref{eq:est_bdy_term_restriction}, as well as the same computations as in the estimates \eqref{eq:est_LHS_term1}--\eqref{eq:est_aux_term}, we get  the following estimate  for $0<h<1$: 
\begin{align*}
    \left|\int_Q \n{A,d\psi}_g \left(\overline{\alpha_1}\alpha_2\right) \left(\beta_{A_2}\overline{\beta_{A_1}}\right) dV_gdt\right| 
\le 
C\left(h+h^{-2n-4} \|\Lambda_{A_2,q_2}^\sharp-\Lambda_{A_1,q_1}^\sharp\|_{\mathcal{L}(\mathcal{H}_1, \mathcal{H}_2)} \right)\|\alpha_1\|_{\ast \ast} \|\alpha_2\|_{\ast \ast}.
\end{align*}

Let us now choose the amplitudes $\alpha_i$, $i=1,2$, in the same way as in \eqref{eq:form_alpha_polar}. 
Then it follows from the same arguments as in the proof of Lemma \ref{lem:DN_map_ray_transform} that
\begin{align*}
&\left|\int_{\p_+SM_1} \mathcal{I}_1(A)(y,\theta)\Psi_1(y,\theta) \Psi_2(y,\theta)dS_g(\theta) \right| 
\\
&\le 
C\left(h+h^{-2n-4} \|\Lambda_{A_2,q_2}^\sharp-\Lambda_{A_1,q_1}^\sharp\|_{\mathcal{L}(\mathcal{H}_1, \mathcal{H}_2)}  \right)  \|\Psi_1\|_{H^2(\p_+SM_1)}\|\Psi_2\|_{H^2(\p_+SM_1)}.
\end{align*}   

As the minimum value of the function $h \mapsto h+h^{-2n-4} \|\Lambda_{A_2,q_2}^\sharp-\Lambda_{A_1,q_1}^\sharp\|_{\mathcal{L}(\mathcal{H}_1, \mathcal{H}_2)}$ is a multiple of $\|\Lambda_{A_2,q_2}^\sharp-\Lambda_{A_1,q_1}^\sharp\|_{\mathcal{L}(\mathcal{H}_1, \mathcal{H}_2)}^{\frac{1}{2n+5}}$, we deduce from the previous inequality that
\begin{align*}
&\left|\int_{\p_+SM_1} \mathcal{I}_1(A)(y,\theta)\Psi_1(y,\theta) \Psi_2(y,\theta)dS_g(\theta) \right| 
\\
&\le 
C\|\Lambda_{A_2,q_2}^\sharp-\Lambda_{A_1,q_1}^\sharp\|_{\mathcal{L}(\mathcal{H}_1, \mathcal{H}_2)}^{\frac{1}{2n+5}} \|\Psi_1\|_{H^2(\p_+SM_1)}\|\Psi_2\|_{H^2(\p_+SM_1)}.
\end{align*}
Proceeding as in the end of Subsection \ref{subsec:proof_solenoidal_part_A}, we conclude that there exists a constant $C>0$, which depends on $(M,g), N, n$, and $T$, such that
\begin{equation}
\label{eq:est_As_restriction}
\|A^s\|_{L^2(M)}
\le
C\|\Lambda_{A_2,q_2}^\sharp-\Lambda_{A_1,q_1}^\sharp\|_{\mathcal{L}(\mathcal{H}_1, \mathcal{H}_2)}^{\frac{1}{8n+20}}.
\end{equation}

We now move to derive a H\"older-type estimate for the electric potential $q$. By the Sobolev embedding theorem and the interpolation inequality, we obtain from \eqref{eq:est_As_restriction} that
\begin{equation}
\label{eq:est_As_Linfty_restriction}
\|A^s\|_{L^\infty(M)}
\le
C \|\Lambda_{A_2,q_2}-\Lambda_{A_1,q_1}\|^{\zeta'},
\end{equation}
where $\zeta' =\frac{\kappa}{8n+20}$. Here the constant  $\kappa$ is the same as in Subsection \ref{subsec:proof_electric_hyperbolic}.

Arguing similarly as in the proof of Lemmas \ref{lem:est_int_id_q} and \ref{lem:DN_map_ray_transform_q}, we get that
\begin{align*}
&\left|\int_{\p_+SM_1}  \mathcal{I}_0(q)(y,\theta)\Psi_1(y,\theta) \Psi_2(y,\theta) dS_g(\theta)\right| 
\\
&\le 
C\left(h+h^{-2n-5} \|\Lambda_{A_2,q_2}^\sharp-\Lambda_{A_1,q_1}^\sharp\|^{\zeta'} \right)
\|\Psi_1\|_{H^2(\p_+ SM_1)}
\|\Psi_2\|_{H^2(\p_+ SM_1)}       
\end{align*}
for $0<h<1$. 
From here, it is straightforward to check that the minimum value of the function $h \mapsto h+h^{-2n-5} \|\Lambda_{A_2,q_2}^\sharp-\Lambda_{A_1,q_1}^\sharp\|^{\zeta'}$ is a multiple of $\|\Lambda_{A_2,q_2}^\sharp-\Lambda_{A_1,q_1}^\sharp\|^{\frac{\zeta'}{2n+6}}$. Thus, we get from the   inequality above that
\begin{align*}
& \left| \int_{\p_+SM_1} \mathcal{I}_0(q)(y,\theta)\Psi_1(y,\theta) \Psi_2(y,\theta) dS_g(\theta)\right|
\\
&\le
C\|\Lambda_{A_2,q_2}^\sharp-\Lambda_{A_1,q_1}^\sharp\|^{\frac{\zeta'}{2n+6}}
\|\Psi_1\|_{H^2(\p_+ SM_1)}
\|\Psi_2\|_{H^2(\p_+ SM_1)}.   
\end{align*}
By the same reasoning as the end of Subsection \ref{subsec:proof_electric_hyperbolic}, we have that
\begin{equation}
\label{eq:Holder_electric_restriction}
\|q\|_{L^2(M)}
\le
C\|\Lambda_{A_2,q_2}^\sharp-\Lambda_{A_1,q_1}^\sharp\|^{\frac{\zeta'}{8n+24}}.
\end{equation}

Therefore, we obtain the claimed estimate \eqref{eq:est_DN_restriction} from the inequalities \eqref{eq:est_As_restriction} and \eqref{eq:Holder_electric_restriction}. This completes the proof of Proposition \ref{prop:stability_DN_map_restriction}.
\end{proof}

\bibliographystyle{abbrv}
\bibliography{bib_stability}
\end{document}